\newcommand{\be}{\begin{equation}}
\newcommand{\ee}{\end{equation}}
\newcommand{\beq}{\begin{eqnarray}}
\newcommand{\eeq}{\end{eqnarray}}
\newtheorem{thm}{Theorem}[section]
\newtheorem{lma}{Lemma}[section]
\newtheorem{prop}{Proposition}[section]
\newtheorem{cor}{Corollary}[section]
\newtheorem{defn}{Definition}[section]
\newtheorem{claim}{Claim}[section]
\theoremstyle{remark}
\newtheorem{rem}{Remark}[section]
\numberwithin{equation}{section}
\def\be{\begin{equation}}
\def\ee{\end{equation}}
\def\bee{\begin{equation*}}
\def\eee{\end{equation*}}
\def\lf{\left}
\def\ri{\right}
\def\K{K\"ahler }
\def\KR{K\"ahler-Ricci }
\def\Ric{\text{\rm Ric}}
\def\Rm{\text{\rm Rm}}
\def\OB{\text{\rm OB}}
\def\wt{\widetilde}
\def\p{\partial}
\def\ddb{\sqrt{-1}\partial\bar\partial}
\def\aint{\frac{\ \ }{\ \ }{\hskip -0.4cm}\int}
\def\heat{\lf(\frac{\p}{\p t}-\Delta\ri)}
\def\e{\varepsilon}
\def\a{{\alpha}}
\def\b{{\beta}}
\def\ijb{{i\bar{j}}}
\def\C{\mathbb{C}}
\def\tR{ \mathcal{R}}
\begin{document}
\title[]
{K\"ahler manifolds with almost non-negative curvature}

 \author{Man-Chun Lee}
\address[Man-Chun Lee]{Department of Mathematics, Northwestern University, 2033 Sheridan Road, Evanston, IL 60208}
\email{mclee@math.northwestern.edu}

\author{Luen-Fai Tam}
\address[Luen-Fai Tam]{The Institute of Mathematical Sciences and Department of
 Mathematics, The Chinese University of Hong Kong, Shatin, Hong Kong, China.}
 \email{lftam@math.cuhk.edu.hk}

\renewcommand{\subjclassname}{
  \textup{2010} Mathematics Subject Classification}
\subjclass[2010]{Primary 32Q15; Secondary 53C44
}

\date{October 2019; revised in \today}

\begin{abstract}
In this paper, we construct local and global solutions to the \KR flow from a non-collapsed \K manifold with curvature bounded from below. Combines with the mollification technique of McLeod-Simon-Topping, we show that the Gromov-Hausdorff limit of sequence of complete noncompact non-collapsed \K manifolds with orthogonal bisectional curvature and Ricci curvature bounded from below is homeomorphic to a complex manifold. We also use it to study the complex structure of complete \K manifolds with nonnegative orthogonal bisectional curvature, nonnegative Ricci curvature and maximal volume growth.
\end{abstract}


\maketitle

\markboth{Man-Chun Lee and Luen-Fai Tam}{\K manifolds with almost non-negative curvature}

\section{introduction}

In this work, we are interested to study complete non-compact \K manifolds with curvature bounded from below. More specifically, we consider complete non-compact \K manifolds with both {\it orthogonal bisectional curvature} and {\it Ricci curvature} being bounded from below. Let us first recall the definition of orthogonal bisectional curvature:
\begin{defn}\label{d-OB}
On a \K manifold $(M,g)$, we say that the orthogonal bisectional curvature $\OB$ is bounded from below by $k$ (denoted   by $\OB\geq k$) if for any $X,Y\in T^{1,0}M$, $g(X,\bar Y)=0$, we have
\begin{align}
\label{OB-defn}R(X,\bar X,Y,\bar Y) \geq k B(X,\bar X,Y,\bar Y)
\end{align}
where $B(X,\bar Y, Z,\bar W)=g(X,\bar Y)g(Z,\bar W)+g(X,\bar W)g(Z,\bar Y).$
\end{defn}

 In Riemannian geometry, the Gromov's Compactness theorem \cite{Gromov1999} states as follows: {\it Let $(M_i^m,g_i,p_i)$ be a sequence of pointed complete Riemannian manifolds of the same  dimension $m$ with Ricci curvature bounded from below by $k$. Then after passing to a subsequence $(M_i, p_i)$,  together with    the distance function  $d_i$ induced by $g_i$  will converge to a complete pointed metric space $(M_\infty,d_\infty,p_\infty)$ in the pointed Gromov-Hausdorff (PGH) sense}.

For the definition of pointed Gromov-Hausdorff convergence, we refer readers to \cite[Definition 8.1.1]{BYS}.

 One of the main question is to study the structure of the pointed Gromov-Hausdorff limit $(M_\infty,d_\infty,p_\infty)$.
   In this work, we would like to study the limit space in the case when each $(M_i,g_i,p_i)$ is \K with $\OB(g_i)$ and  $ \Ric(g_i)$ both being bounded from below by a constant $k$ independent of $i$ and is non-collapsing in the sense that $V_{g_i}(p_i,1)=Vol_{g_i}\left(B_{g_i}(p_i,1) \right)\ge v$  for some $v>0$ for all $i$. By rescaling, we may assume $k=-1$ and we obtain the following:

\begin{thm}\label{t-intro-GH}\hfill

\begin{enumerate}
\item[{\bf(I)}]
Suppose $(M^n_i,g_i)$ is a sequence of pointed complete non-compact \K manifolds with $p_i\in M_i$,  such that for all $i\in \mathbb{N}$
\begin{enumerate}
\item $\mathrm{OB}(g_i)\geq -1$ on $M_i$;
\item $\mathrm{Ric}(g_i)\geq -1$ on $M_i$;
\item $V_{g_i}(p_i,1)\geq v>0$ for some $v>0$ independent of $i$ ({\bf weakly non-collapsed}).
\end{enumerate}
Then there exist a   complex  manifold $(M_{\infty},J_\infty)$, $p_{\infty}\in M_\infty$ and a complete distance metric $d_\infty : M_{\infty}\times M_{\infty} \rightarrow [0,+\infty)$ generating the same topology as $M_\infty$ such that after passing to a subsequence in $i$ we have
$$(M_i,d_{g_i},p_i)\rightarrow (M_\infty,d_\infty ,p_\infty)$$
in the pointed Gromov-Hausdorff sense.   Moreover, there exist an increasing sequence   $a_l>0$, a decreasing sequence   $T_l>0$ and a smooth \KR flow solution $g_\infty(t)$ defined on $\bigcup_{l=1}^\infty B_{d_\infty}(p_\infty,l)\times (0,T_l]$ such that for all $l\in\mathbb{N}$,
\begin{equation*}
\left\{
\begin{array}{ll}
&\Ric(g_\infty(t))\geq -a_{l};\\
&\OB(g_\infty(t))\ge -a_{l};\\
&|\Rm(g_\infty(t))|\leq a_{l}t^{-1};\\
&\mathrm{inj}_{g_\infty(t)}(x)\geq \sqrt{a_{l}^{-1}t}
\end{array}
\right.
\end{equation*}
on $B_{d_\infty}(p_\infty,l)\times (0,T_l]$.

\item[{\bf(II)}] Suppose in addition,
$V_{g_i}(x,1)\geq v>0$ for some $v>0$ independent of $i$ and $x\in M_i$ ({\bf uniformly non-collapsed}), then we can choose $a_l$ and $T_l$ to be constants independent of $l$. In particular, the limit space $M_\infty$ admits a complete \K metric $g_\infty$ with bounded curvature so that the distance metric induced by $g_\infty$ is quasi-isometric to $d_\infty$.
\end{enumerate}
\end{thm}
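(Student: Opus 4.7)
The plan is to prove the theorem by smoothing the sequence $(M_i,g_i)$ via the Kähler-Ricci flow, in the spirit of the mollification strategy of McLeod-Simon-Topping. The underlying pointed Gromov-Hausdorff limit $(M_\infty,d_\infty,p_\infty)$ already exists as a metric space by Gromov's compactness theorem, using only the Ricci lower bound. The real content is to produce, on this limit, a compatible complex manifold structure together with a \KR flow $g_\infty(t)$ that displays the claimed estimates, and to check that the smooth and metric topologies agree.

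The first step, which I expect to draw on the local existence results appearing earlier in the paper, is the construction for each $i$ of a \KR flow $g_i(t)$ on a large region near $p_i$ with quantitative estimates: a Perelman-type curvature decay $|\Rm(g_i(t))|\le a_l t^{-1}$, a preservation up to loss of the bounds $\mathrm{OB}(g_i(t))\ge -a_l$ and $\Ric(g_i(t))\ge -a_l$, and an injectivity radius lower bound $\mathrm{inj}_{g_i(t)}\ge \sqrt{a_l^{-1}t}$ fed by the non-collapsing hypothesis. The injectivity radius estimate requires propagating non-collapsing outward from $p_i$; this is done by combining Perelman's $\kappa$-noncollapsing with distance distortion bounds along the flow of the form $|d_{g_i(t)}-d_{g_i(0)}|\le C\sqrt{t}$, which themselves follow from the curvature decay. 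Under the weakly non-collapsed hypothesis of (I), the non-collapsing only holds at $p_i$, so the constants $a_l,T_l$ necessarily deteriorate as one enlarges the region; under the uniformly non-collapsed hypothesis of (II), one obtains the same estimates with $a_l,T_l$ uniform in $l$.

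Next, I would apply Hamilton's Cheeger-Gromov compactness for Ricci flows to the sequence $(M_i,g_i(t),J_i,p_i)$, using the uniform curvature, injectivity radius, and non-collapsing bounds from step one on each parabolic cylinder $B_{g_i}(p_i,l)\times [\tau,T_l]$ for arbitrary $\tau>0$. This extracts, after a diagonal argument in $\tau\to 0$ and $l\to\infty$, a smooth limiting \KR flow $g_\infty(t)$ on an open complex manifold $(M_\infty',J_\infty)$, with the complex structure inherited as the smooth limit of the parallel $J_i$. The estimates in the conclusion are transferred from the prelimit estimates by smooth convergence on compact sets. To identify this smooth limit with the Gromov-Hausdorff limit as a topological space, I would use the distance distortion inequality along the \KR flow to conclude that $d_{g_\infty(t)}\to d_\infty$ uniformly on compact sets as $t\to 0$, and hence that the underlying point sets and topologies match; this is what upgrades the metric limit to a complex manifold carrying $d_\infty$ as its distance.

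For part (II), the uniform non-collapsing makes the estimates in step one global and $t$-uniform, so the flow $g_\infty(t)$ extends to all of $M_\infty\times(0,T]$ with $|\Rm|$ bounded; fixing any sufficiently small $t$ yields a complete \K metric $g_\infty$ on $M_\infty$ of bounded curvature, and the distance distortion estimate gives the quasi-isometry with $d_\infty$. The main obstacle is step one: constructing a \KR flow from initial data whose curvature is only bounded from below, not from above, while simultaneously preserving the lower bound on orthogonal bisectional curvature up to controlled loss. This combines an approximation of $(M_i,g_i)$ by smoother metrics, a Chern-Ricci or Kähler local existence theory adapted to merely lower curvature bounds, and a maximum principle argument on the evolution of $\mathrm{OB}$ that is delicate because the OB condition is not invariantly preserved by the Ricci flow without additional structure. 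The second subtlety is extending non-collapsing away from $p_i$ in the setting of (I), where one only knows non-collapsing at a single point and must feed the curvature decay and Perelman entropy monotonicity back into the distance control to propagate the bound over a ball of radius $l$ before $T_l$ closes up.
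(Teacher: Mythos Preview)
Your outline matches the paper's strategy: run a (pyramid) \KR flow $g_i(t)$ on each $M_i$ with scale-invariant curvature and injectivity-radius estimates (this is exactly the content of Theorem~\ref{t-intro-pyramid}), pass to a smooth Cheeger--Gromov limit $(M_\infty,g_\infty(t),p_\infty)$, use the distance-distortion inequality to identify the resulting space with the PGH limit, and recover $J_\infty$ as the $C^\infty$ limit of the pulled-back parallel complex structures. Two points, however, need correction.

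First, the propagation of non-collapsing from $p_i$ to points of $B_{g_i}(p_i,l)$ is \emph{not} done via Perelman's entropy monotonicity or by feeding curvature decay back into distance control. Since $\Ric(g_i)\ge -1$ at $t=0$, Bishop--Gromov volume comparison immediately yields $V_{g_i}(x,1)\ge v_l(n,v,l)$ for every $x\in B_{g_i}(p_i,l)$; this is the input for the local \KR flow construction (Lemma~\ref{l-local}). No $\kappa$-noncollapsing argument is used anywhere in the paper. Second, in the weakly non-collapsed case {\bf(I)}, the pyramid flows $g_i(t)$ live on space-time regions $\bigcup_l B_{g_i}(p_i,l)\times[0,S_l]$ whose existence times shrink with $l$, and a single global application of Hamilton's compactness does not produce a limiting manifold. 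The paper follows McLeod--Topping: apply \emph{local} Hamilton compactness on each level $l$ to obtain local limits $(N_l,h_l(t),q_l)$, check that the transition maps $\phi_{l+1;i}^{-1}\circ\phi_{l;i}$ converge to smooth embeddings $\Omega_l\hookrightarrow\Omega_{l+1}$ intertwining $h_l(t)$ and $h_{l+1}(t)$, and then invoke a gluing lemma to assemble these into a single smooth manifold $M_\infty$ carrying $g_\infty(t)$. Your phrase ``a diagonal argument in $\tau\to 0$ and $l\to\infty$'' hides precisely this gluing step; without it one has only a tower of local limits, not a global manifold, and one cannot conclude that the distance $d_\infty$ is complete or that the smooth and metric topologies agree.
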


We will use techniques in \KR flow to prove these results following the ideas by Simon-Topping \cite{SimonTopping2017}, Bamler--Cabezas-Rivas--Wilking \cite{BCRW}, Hochard \cite{Hochard2019}, Lai \cite{Lai2019} and McLeod-Topping \cite{McLeodTopping2018,McLeodTopping2019}, where limit spaces of Riemannian manifolds have been studied using Ricci flow. Their results say that the limit spaces are smooth manifolds under conditions on lower bounds of various curvatures together with non-collapsing. By  the stability theorem of Perelman \cite{Perelman1991}, the same result is true under the condition that the sectional curvature is bounded from below. In the \K setting,
Donaldson-Sun \cite{DonaldsonSun2014} proved
that the Gromov-Hausdorff limit of a sequence of non-collapsed, polarized compact K\"ahler
manifolds with bounded Ricci curvature,  is a normal projective variety. It was generalized by Liu-Sz\`ekelyhidi in \cite{LiuSzekelyhidi2018} by   removing the Ricci upper bound.     For non-compact \K manifolds, using  Gromov-Hausdorff convergence theory by Cheeger-Colding,  assuming that bisectional curvature of the sequence satisfies $\mathrm{BK}(g_i)\geq -1$ together with non-collapsing, Liu   \cite{Liu2017,Liu2018} proved that the limit space  is homeomorphic to a normal complex analytic space so that the singular set is  of complex codimension at least $4$.  Recall that a \K metric is said to have bisectional curvature bounded from below by $k$ (denote it by $\mathrm{BK}\geq k$) if \eqref{OB-defn} holds for any $X,Y\in T^{1,0}M$. Clearly, $\mathrm{BK} \geq k$ implies $\OB\geq k$ and $\Ric\geq (n+1)k$. Part {\bf(I)} of the theorem is a generalization  of Liu's result.  { This can also be viewed as complex analogue to the results in  Riemannian case mentioned above.}

From the arguments in \cite{McLeodTopping2018,McLeodTopping2019}, in order to prove Theorem \ref{t-intro-GH}         {\bf (I)}, we need to construct a pyramid solution to the  \KR flow which is in parallel with \cite[Theorem 1.3]{McLeodTopping2019}. In order to prove Theorem \ref{t-intro-GH}{\bf(II)}, we need to construct a global solution   with precise estimates on lifespan, curvature and injectivity radius. In this regard, we have the following:

\begin{thm}\label{t-intro-pyramid} \

\begin{enumerate}
  \item [{\bf(I)}]
  For any $n, v_0>0$, there exist  non-decreasing sequences   $a_k,\b_k\geq 1$ and non-increasing sequence $S_k>0$ such that the following holds: Suppose $(M^n,g_0)$ is a complete non-compact \K manifold and $p\in M$ so that
\begin{enumerate}
\item $\Ric(g_0)\geq -1$ on $M$;
\item $\OB(g_0)\geq -1$ on $M$;
\item $V_{g_0}(p,1)\geq v_0$.
\end{enumerate}
Then for any $m\in \mathbb{N}$, there is a solution to the \KR flow $g_m(t)$ defined on a subset $D_m$ of space-time given by
$$D_m=\bigcup_{k=1}^m \left(B_{g_0}(p,k)\times [0,S_k]\right),$$
with $g_m(0)=g_0$ such that $g_m(t)$ satisfies
\bee
\left\{
\begin{array}{ll}
&\Ric(g_m(t))\geq -\b_k;\\
&\OB(g_m(t))\ge -\b_k;\\
&|\Rm(g_m(t))|\leq a_kt^{-1};\\
&\mathrm{inj}_{g_m(t)}(x)\geq \sqrt{a_k^{-1}t}
\end{array}
\right.
\eee
on each $B_{g_0}(p,k)\times (0,S_k]$.

\item [{\bf(II)}] If in addition, $V_{g_0}(x,1)\geq v_0>0$ for all $x\in M$, then
  there is $a(n,v_0)$, $T(n,v_0)$, $L(n,v_0)>0$ and a complete solution to the \KR flow $g(t)$ defined on $M\times [0,T]$ so that for all $(x,t)\in M\times (0,T]$,
  \bee
\left\{
\begin{array}{ll}
&\mathrm{Ric}(g(t))\geq -L;\\
&\mathrm{OB}(g(t))\geq -L;\\
&|\Rm(g(t))|\leq at^{-1};\\
&\mathrm{inj}_{g(t)}(x)\geq \sqrt{a^{-1}t}.
\end{array}
\right.
\eee

\end{enumerate}
\end{thm}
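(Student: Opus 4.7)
The plan is to deduce Theorem~\ref{t-intro-pyramid} from a quantitative \emph{local} short-time existence theorem for the \KR flow and then to build the pyramid (resp.\ the global solution) by induction on the radius, following the Simon-Topping / McLeod-Topping / Hochard scheme adapted to the \K setting, with pseudolocality-based uniqueness used to glue successive flows.

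First I would prove the following local existence lemma: for each $n,v_0,R>0$ there exist $S,a,\beta>0$ (depending on $n,v_0,R$) such that if $(M,g_0)$ is complete \K with $\Ric(g_0),\OB(g_0)\geq -1$ on $B_{g_0}(p,R+1)$ and $V_{g_0}(p,1)\geq v_0$, then there is a smooth \KR flow $g(t)$ on $B_{g_0}(p,R)\times[0,S]$ with $|\Rm(g(t))|\leq at^{-1}$, $\mathrm{inj}_{g(t)}(x)\geq\sqrt{a^{-1}t}$ and $\Ric(g(t)),\OB(g(t))\geq -\beta$. Existence follows from a Hochard-type cutoff construction for the parabolic complex \MA equation; the $|\Rm|\leq a/t$ and injectivity radius bounds come from a Perelman / Tian-Wang non-collapsing-to-curvature argument; and the preservation of $\Ric\geq -\beta$, $\OB\geq -\beta$ is a Hamilton tensor maximum principle applied to the evolution of the curvature operator under \KR flow, using the invariance of the $\OB\geq 0$ cone (Cao-Hamilton / Gu) combined with a cutoff/barrier to confine the argument to the ball.

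Given this lemma, Part~\textbf{(I)} is produced by induction on $m$: having constructed $g_{m-1}(t)$ on $D_{m-1}$, apply the local lemma with $R=m+1$ to obtain $\tilde g(t)$ on $B_{g_0}(p,m)\times[0,S_m]$ with $S_m\leq S_{m-1}$, $a_m\geq a_{m-1}$, $\beta_m\geq \beta_{m-1}$, and use Chen-Zhu / Kotschwar-style uniqueness of \KR flow under the bound $|\Rm|\leq a/t$, localized to a ball via pseudolocality, to identify $\tilde g$ with $g_{m-1}$ on the overlap; the union is $g_m(t)$ on $D_m$. For Part~\textbf{(II)}, the stronger assumption $V_{g_0}(x,1)\geq v_0$ for \emph{all} $x\in M$ lets one pick $S,a,\beta$ in the local lemma independent of $R$, since the curvature and injectivity radius estimates depend only on the local non-collapsing near the centered point; running the lemma centered at an exhausting sequence $x_i\in M$ and gluing yields a smooth \KR flow on $M\times[0,T(n,v_0)]$, with completeness of each time slice automatic from $|\Rm|\leq a/t$ and Shi's derivative estimates.

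The principal obstacle is the \emph{localized} preservation of $\OB\geq -\beta$: the standard Hamilton tensor maximum principle needs a global minimum on a compact region, so one must use the a priori bound $|\Rm|\leq a/t$ together with a cutoff function in the spirit of \cite{BCRW} and verify that the resulting time-dependent lower bound $\beta(t)$ does not blow up before the lifespan $S_k$. This is where the \K structure is essential: the ODE on the curvature operator along \KR flow respects the $\OB\geq 0$ cone, which is not true for a general Riemannian evolution, and this ultimately gives the controlled deterioration of $\OB$ that underlies all four displayed estimates in both parts of the theorem.
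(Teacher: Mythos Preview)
Your local existence lemma and the identification of the localized $\OB$-preservation as the main analytic issue are on target; the paper handles that step essentially as you outline (a Hochard/BCRW-type local maximum principle applied to the evolution inequality for the lowest $\OB$-eigenvalue, Proposition~\ref{OB-esti}). The gap is the gluing. You build $g_m$ by producing a fresh local solution $\tilde g$ on $B_{g_0}(p,m)\times[0,S_m]$ and then invoking a \emph{localized} uniqueness theorem to identify $\tilde g$ with $g_{m-1}$ on the overlap; for Part~\textbf{(II)} you similarly glue solutions centered at different $x_i$. But local uniqueness for Ricci or \KR flow on an incomplete region is not a standard result: Chen--Zhu and Kotschwar both require completeness, and pseudolocality supplies curvature control, not uniqueness. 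Making this step rigorous would be a separate project.

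The paper sidesteps uniqueness entirely. The key device is not just a local existence lemma but an \emph{extension} theorem (Theorem~\ref{t-intro-local-ext-KRF}): a \KR flow on $B_{g_0}(p,R+1)\times[0,S]$ with $|\Rm|\le Bt^{-1}$ and $\mathrm{inj}\ge\sqrt{B^{-1}t}$ can be extended, as the \emph{same} solution, to $B_{g_0}(p,R)\times[0,T_0]$ with $T_0=T_0(n,v,\beta_0,B)$ independent of $S$. For fixed $m$ one starts with a single local solution on $B_{g_0}(p,m+1)\times[0,T_{m+1}]$ and iterates the extension inward to $B_{g_0}(p,m)\times[0,S_m]$, then $B_{g_0}(p,m-1)\times[0,S_{m-1}]$, down to $k=1$; at every stage one extends rather than glues, so no uniqueness is needed and the output is automatically a single smooth flow on $D_m$. (Note also that the statement asserts no compatibility between $g_m$ and $g_{m-1}$, so carrying $g_{m-1}$ through an induction is unnecessary.) For Part~\textbf{(II)} the paper likewise avoids gluing: it takes local solutions on the exhaustion $B_{g_0}(p,i)$ with uniform estimates, obtains uniform local $C^k$ bounds from Chen's local curvature estimate and modified Shi estimates, and extracts a global limit by Ascoli--Arzel\`a.
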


  The pyramid \KR flow in part {\textbf{(I)} of the above theorem has uniform estimates on geodesic balls of any fixed radius. This enables us to take local Hamilton's compactness to construct local smooth limits in the proof of Theorem \ref{t-intro-GH}. The global limiting manifold $M_\infty$ in Theorem \ref{t-intro-GH} will be obtained by gluing all local smooth limits.

To construct the pyramid solutions in the theorem, the main ingredient is to construct local solutions to the \KR flow on any geodesic balls with uniform estimates on lifespan, curvature and injectivity radius. This can be done by using techniques of Chern-Ricci flows on Hermitian manifolds which were introduced by Gill \cite{Gill2011} and Tosatti-Weinkove \cite{TosattiWeinkove2015} and extending   the local construction  and local estimates in \cite{LeeTam2017-1},  by adopting the Hochard's idea \cite{Hochard2016} of partial Ricci flow to the Chern-Ricci flow setting.   This improves the result in \cite{LeeTam2017}. One of the main new ingredient is a local maximum principle by Hochard \cite{Hochard2019} building on the heat kernel estimates in \cite{BCRW}.     Modifying the argument by Simon-Topping \cite{SimonTopping2017} as in \cite{LeeTam2017}, we obtain the following extension result which will be used to construct pyramid \KR flows:

\begin{thm}\label{t-intro-local-ext-KRF}
For all $\b_0 ,B\geq 1$ and $v>0$, there exist $a(n,v,\b_0)\geq 1$ and $T_0(n,v,\b_0,B)>0$   such that  the following is true: Suppose $(M^n,g_0)$ is a \K manifold with complex dimension $n$. Let $p\in M$ so that $B_{g_0}(p,R+2)\Subset M$ for some $R\geq 1$ and for all $x\in B_{g_0}(p,R+1)$,
\begin{enumerate}
\item [(i)]$\Ric(g_0)\geq -\b_0$;
\item [(ii)]$\OB(g_0)\geq -\b_0$;
\item [(iii)] $V_{g_0}(x,1)\geq v$.
\end{enumerate}
If there is $S>0$ and a smooth \KR flow solution $\tilde g(t)$ defined on $B_{g_0}(p,R+1)\times [0,S]$ with $\tilde g(0)=g_0$ and satisfies $|\Rm(\tilde g(t))|\leq Bt^{-1}$ and $\mathrm{inj}_{\tilde g(t)}(x)\geq \sqrt{B^{-1}t}$, then there is a smooth \KR flow $g(t)$ solution on $B_{g_0}(p,R)\times [0,T_0]$ such that $g(t)=\tilde g(t)$ on $B_{g_0}(p,R)\times [0,S\wedge T_0]$ with
\begin{enumerate}
\item $|\Rm(g(t))|\leq at^{-1}$;
\item $ \mathrm{inj}_{g(t)}(x)\geq \sqrt{a^{-1}t}$
\end{enumerate}
for all $(x,t)\in B_{g_0}(p,R)\times [0,T_0]$.
\end{thm}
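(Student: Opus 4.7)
The plan is to construct a fresh local \KR flow $g(t)$ on the smaller ball $B_{g_0}(p,R)$ carrying the announced curvature and injectivity estimates, and then to use parabolic uniqueness on the overlap to identify it with $\tilde g(t)$. The construction follows the Chern-Ricci flow strategy of our earlier work \cite{LeeTam2017,LeeTam2017-1}, now combined with Hochard's local maximum principle \cite{Hochard2019} (which rests on the heat kernel bounds of \cite{BCRW}) in place of any global maximum principle, as advertised in the discussion just before the statement. Applying the Chern-Ricci flow existence theorem of \cite{LeeTam2017-1} on a slight shrinking of $B_{g_0}(p,R+1)$ produces a smooth flow $g(t)$ with $g(0)=g_0$ on $[0,T_0]$; since $g_0$ is \K, this Chern-Ricci flow is in fact a \KR flow. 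The reason for reconstructing the flow rather than simply using $\tilde g$ is that the construction can be arranged to satisfy a priori estimates $|\Rm(g(t))|\leq a t^{-1}$ with $a=a(n,v,\b_0)$ independent of the possibly large constant $B$ of $\tilde g$.

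The core estimates needed come in three layers. First, \emph{preservation of curvature lower bounds}: the hypotheses $\OB(g_0)\geq -\b_0$ and $\Ric(g_0)\geq -\b_0$ are propagated to $\OB(g(t))\geq -C\b_0$ and $\Ric(g(t))\geq -C\b_0$ on $B_{g_0}(p,R)\times [0,T_0]$, by feeding Hamilton's ODE inequalities for the orthogonal bisectional and Ricci tensors into Hochard's local maximum principle (our curvature control being only local). Second, \emph{volume non-collapsing at positive times}: the initial bound $V_{g_0}(x,1)\geq v$ together with the two-sided Ricci control and a standard distance-distortion argument give $V_{g(t)}(x,r)\geq c(n,v,\b_0)r^{2n}$ for $r,t$ small. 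Third, and crucially, the \emph{curvature upper bound} $|\Rm(g(t))|\leq a t^{-1}$: a scaling/contradiction-compactness argument in the spirit of Simon-Topping \cite{SimonTopping2017} and Hochard produces, from any hypothetical violating sequence, a complete non-flat \KR flow on a non-collapsed \K manifold with $\OB\geq 0$ and $\Ric\geq 0$, which is ruled out by the \K rigidity available in this setting. The injectivity radius bound $\mathrm{inj}_{g(t)}\geq \sqrt{a^{-1}t}$ is then a standard consequence of the two-sided curvature control and the non-collapsing via Cheeger-Gromov-Taylor.

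The identification $g(t)=\tilde g(t)$ on $B_{g_0}(p,R)\times [0,S\wedge T_0]$ follows from parabolic uniqueness of bounded-curvature \KR flows on the same domain with the same smooth initial data, via a localized Chen-Zhu argument as in \cite{SimonTopping2017,LeeTam2017}. The constant $B$ enters the final statement only through $T_0$, because both this uniqueness comparison and the local maximum principle in the preservation step use the initial quantitative behaviour of $\tilde g$; by contrast $a$ depends only on $(n,v,\b_0)$ since it is produced by the \K rigidity input in the third layer. I expect the main obstacle to be precisely that third layer: forcing the curvature upper bound to be genuinely independent of $B$ requires running the whole contradiction-compactness scheme purely from the local hypotheses within Hochard's framework, and bookkeeping the cutoff scales used while shrinking from $B_{g_0}(p,R+1)$ down to $B_{g_0}(p,R)$ so that no hidden dependence on $B$ sneaks into $a$.
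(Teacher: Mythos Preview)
Your proposal has a genuine gap at its very first step: you cannot ``construct a fresh local \KR flow $g(t)$ with $g(0)=g_0$ on $[0,T_0]$'' by invoking a Chern--Ricci flow existence theorem. Those theorems (e.g.\ Lemma~\ref{l-extension-1} here, or the corresponding result in \cite{LeeTam2017}) require a curvature \emph{upper} bound on the initial metric and yield a flow for time comparable to $|\Rm(g_0)|^{-1}$. No such bound on $g_0$ is assumed, so there is no direct way to run the flow for a uniform time $T_0(n,v,\b_0,B)$. In fact, the existence of such a fresh local flow (Lemma~\ref{l-local} in the paper) is proved as a \emph{corollary} of Theorem~\ref{t-intro-local-ext-KRF}, so your route is circular. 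Your three ``layers'' (persistence of $\OB/\Ric$ lower bounds, non-collapsing, and the pseudolocality-type bound $|\Rm|\le a/t$) are exactly Propositions~\ref{Ric-pre}, \ref{OB-esti}, \ref{pseudo-OB}, but they are a~priori estimates for a flow that already exists on the relevant time interval; they do not produce the flow.

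The paper's argument is architecturally different: it never builds a fresh flow, but instead \emph{extends} the given $\tilde g$. First (Lemma~\ref{l-extension-3}) it uses your layers~1 and~3 to upgrade the $B$-dependent estimates of $\tilde g$ to $a(n,v,\b_0)$-dependent ones on $B_{g_0}(p,R+\tfrac12)\times[0,S\wedge T_1]$, with $T_1$ depending on $B$; this is why $B$ enters $T_0$ but not $a$. Then (Lemma~\ref{l-extension-2}) it iterates: at the current final time $t_k$ the metric has $|\Rm|\le a/t_k$, so the Chern--Ricci flow lemma~\ref{l-extension-1} extends the flow to $t_{k+1}=(1+\mu)^2 t_k$ on a ball shrunk by $5\lambda t_k^{1/2}$, and your layers recover the estimates on the extended interval. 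A geometric-series bookkeeping of the shrinkages $\sum 5\lambda t_k^{1/2}\le 5\lambda\mu^{-1}t_{k_0}^{1/2}$ shows the total spatial loss stays below $\tfrac12$ before time reaches the target $T_0$. Since $g(t)$ is by construction an extension of $\tilde g(t)$, no separate uniqueness argument is needed.
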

In particular, the theorem implies a local existence of \KR flow on geodesic balls, see Lemma \ref{l-local} for details. Theorem \ref{t-intro-pyramid} {\bf(II)} also has an  application on the complex structure of complete non-compact \K manifold with $\OB\ge 0$, $\Ric\ge0$ and of maximal volume growth.

\begin{cor}\label{uniformization}
Suppose $(M^n,g_0)$ is a complete non-compact \K manifold with non-negative orthogonal bisectional curvature, non-negative Ricci curvature and maximal volume growth, then $M$ is biholomorphic to a pseudoconvex domain in $\mathbb{C}^n$ which is homeomorphic to $\mathbb{R}^{2n}$. Moreover, $M$ admits a non-constant holomorphic function with polynomial growth.
\end{cor}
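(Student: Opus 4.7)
The plan is to use the global K\"ahler--Ricci flow from Theorem \ref{t-intro-pyramid} (II) as a smoothing device to reduce to the bounded-curvature regime, then apply known structure results for K\"ahler manifolds with non-negative orthogonal bisectional curvature and maximal volume growth.

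First, I would verify the hypotheses of Theorem \ref{t-intro-pyramid} (II). The curvature bounds $\Ric(g_0), \OB(g_0) \geq 0$ give conditions (1) and (2). By Bishop--Gromov under $\Ric \geq 0$, the normalized volume $V_{g_0}(x,r)/r^{2n}$ is non-increasing in $r$, and maximal volume growth at some (hence every) base point forces $V_{g_0}(x, 1) \geq v_0 > 0$ uniformly in $x$. Theorem \ref{t-intro-pyramid} (II) then supplies a complete K\"ahler--Ricci flow $g(t)$ on $M \times [0,T]$ with $|\Rm(g(t))| \leq a/t$ and $\mathrm{inj}_{g(t)}(x) \geq \sqrt{a^{-1}t}$. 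Since $g(t)$ has bounded curvature at each positive time, the tensorial maximum principle (Hamilton--Shi for $\Ric$, Gu for $\OB$) gives that $\Ric(g(t)), \OB(g(t)) \geq 0$ for all $t \in (0,T]$. Because $\Ric \geq 0$ along the flow, $g(t)$ is non-increasing in $t$, so $d_{g(t)} \leq d_{g_0}$; combined with $|\Rm(g(t))| \leq a/t$ this shows that $(M, g(t_0))$ still has maximal volume growth for any small fixed $t_0 > 0$ (with a possibly smaller constant).

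Thus $\tilde g := g(t_0)$ is a complete K\"ahler metric on $M$ with bounded curvature, $\OB \geq 0$, $\Ric \geq 0$, and maximal volume growth. I would then apply the following machinery. First, the heat equation/Poincar\'e--Lelong methods of Ni and Ni--Tam (whose key Bochner identity for $(1,0)$-forms is valid under $\OB \geq 0$) produce a non-constant holomorphic function of polynomial growth---which already gives the last assertion---together with a strictly plurisubharmonic exhaustion function, so that $(M, \tilde g)$ is Stein. Second, gradient estimates for such holomorphic functions in the spirit of Chau--Tam give a holomorphic map $F : M \to \mathbb{C}^n$ which is a biholomorphism onto a pseudoconvex domain $\Omega \subset \mathbb{C}^n$. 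Third, a soul-type/Busemann argument using $\OB \geq 0$, $\Ric \geq 0$ and maximal volume growth shows $M$ is contractible, whence $\Omega$ is homeomorphic to $\mathbb{R}^{2n}$.

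The main obstacle is the construction of the embedding $F$. Classical uniformization-type results of Chau--Tam or Liu use that non-negative holomorphic bisectional curvature makes the distance-squared function plurisubharmonic off the cut locus, yielding a natural holomorphic coordinate system. Under only $\OB \geq 0$ one must build the global holomorphic coordinates by a more delicate combination of H\"ormander-type $L^2$-estimates weighted by the psh exhaustion from the previous step with growth-and-dimension bookkeeping to guarantee that the resulting map is simultaneously immersive and injective onto its image. By contrast, once bounded curvature has been arranged by the flow, the existence of a polynomial growth holomorphic function and the Stein property of $M$ are comparatively standard.
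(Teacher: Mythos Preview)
Your overall strategy---use the K\"ahler--Ricci flow to regularize and then quote structure results---is the same as the paper's, but the proposal has a genuine gap in the step that produces the biholomorphism, and the root cause is that you only run the flow for a \emph{short} time.

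The paper does not work at a single time slice $\tilde g=g(t_0)$. Instead, because the hypotheses $\OB\ge 0$, $\Ric\ge 0$, maximal volume growth are scale invariant, applying Theorem~\ref{t-intro-pyramid}(II) to $R^{-2}g_0$ and rescaling back yields a \emph{long-time} solution $g(t)$ on $M\times[0,\infty)$ with $|\Rm(g(t))|\le a/t$, $\mathrm{inj}_{g(t)}\ge\sqrt{a^{-1}t}$, and (via Corollary~\ref{c-NOB}) $\OB(g(t)),\Ric(g(t))\ge 0$ for all $t$ (this is recorded as Theorem~\ref{t-existence-NOB}). The long-time flow is exactly what Chau--Tam \cite{ChauTam2008} require: one feeds in the rescaled sequence $g_i=i^{-1}g(i)$ and their Theorem~1.2 outputs the biholomorphism onto a pseudoconvex domain homeomorphic to $\mathbb{R}^{2n}$, with no need for your proposed ad~hoc construction of $F$ via H\"ormander estimates (which you yourself flag as the main obstacle, and which is not available in the literature under only $\OB\ge 0$).

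The long-time flow is also what drives the holomorphic-function part. From $|\Rm|\le a(t+1)^{-1}$ on $[0,\infty)$ and Ni--Tam \cite{NiTam2003} one gets the integrability $\int_0^\infty k(x,s)\,ds<\infty$ of the averaged scalar curvature; this is the input to \cite{NiTam2013} for solving $\sqrt{-1}\partial\bar\partial u=\Ric(g_0)$ with $u$ of logarithmic growth. A splitting argument (Ni--Niu \cite{NiNiu2019}) upgrades $u$ to a strictly plurisubharmonic function of logarithmic growth, after which the $L^2$ $\bar\partial$-method produces the polynomial-growth holomorphic function. Your sketch invokes ``Ni/Ni--Tam methods'' directly from a single bounded-curvature slice, but those methods need precisely the scalar-curvature decay that the long-time estimate supplies; a short-time flow does not give it.

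Two smaller points. First, preservation of $\OB\ge 0$ and $\Ric\ge 0$ under a flow with $|\Rm|\le a/t$ (unbounded as $t\to 0$) is not covered by the classical Hamilton/Gu tensor maximum principle; the paper uses the local maximum principle of Hochard (Lemma~\ref{OB-pre-Hochard}, Corollary~\ref{c-NOB}) for this. Second, your argument that maximal volume growth persists to $g(t_0)$ is essentially correct in spirit, but the paper handles it via \cite[Lemma~2.3]{SimonTopping2016} rather than the monotonicity you sketch.
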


The first part of corollary is related to the uniformization conjecture by Yau which states that a complete non-compact \K manifold with positive bisectional curvature must be biholomorphic to $\mathbb{C}^n$. In general, $\mathrm{BK}\ge 0$ implies $\OB\ge 0, \Ric\ge 0$. However, the converse is false in general. In particular, Ni and Zheng had constructed examples of complete $U(n)$-invariant \K metric which has non-negative Ricci curvature, non-negative orthogonal bisectional curvature and maximal volume growth but has negative holomorphic sectional curvature somewhere, see \cite[section 7]{NiZheng2018}.  Under $\mathrm{BK}\geq 0$, the uniformization conjecture of Yau was proved recently by Liu \cite{Liu2017} under an extra assumption of maximal volume growth, see also \cite{ChauTam2006,LeeTam2017} for a different approach using the \KR flow. It will be interesting to know if one can obtain the full biholomorphism to $\mathbb{C}^n$ under this weaker assumption. Note that the condition on Corollary \ref{uniformization}
is strictly weaker than that in \cite{Liu2017,LeeTam2017}. We also would like to remark that in a recent work of Liu-Sz\'ekelyhidi \cite{LiuSzekelyhidi2018}, it was shown that a complete non-compact \K manifold is biholomorphic to $\mathbb{C}^n$ under $\Ric\geq 0$ and almost Euclidean asymptotic volume ratio.  The second part of the corollary is a generalization of \cite[Theorem 1.4]{Liu2016} by Liu which states that a complete non-compact \K manifold with non-negative bisectional curvature and maximal volume growth supports a non-trivial holomorphic function with polynomial growth.

The paper is organized as follows:  In section 2, we will derive some a-priori estimates for the \KR flow. In section 3, we will construct local and global solutions to the \KR flow { and prove Theorem \ref{t-intro-local-ext-KRF} and Theorem \ref{t-intro-pyramid}}. In section 4, we will use the solutions of the \KR flow to prove Theorem \ref{t-intro-GH} and Corollary \ref{uniformization}. In the appendix, we will collect some useful  results which will be used in the main part of the paper. We will also show that the examples constructed by Ni-Zheng has $\OB,\Ric\geq 0$ and maximal volume growth but $\mathrm{BK}<0$ somewhere.

{\it Acknowledgement}: The authors are grateful to {Rapha\"el} Hochard for sending us his thesis and generously sharing his ideas.   The authors would like to thank the referee for some useful comments.   Part of the works was done when the first author visited the Institute of Mathematical Sciences at The Chinese University of Hong Kong, which he would like to thank for the hospitality. M.-C. Lee is supported in part by NSF grant 1709894. L.-F. Tam is supported in part by Hong Kong RGC General Research Fund \#CUHK 14301517.

}

\section{curvature estimates}
In this section, we will derive   local curvature estimates of the \KR flow.   We will prove that under  some assumptions which are invariant under parabolic rescaling, the  almost non-negativity of the orthogonal bisectional curvature and   Ricci curvature will be preserved along the flow locally.  We will first need the following lemma from \cite[Theorem 1.1]{Niu2014} stating that almost non-negativity of orthogonal bisectional curvature will imply almost non-negativity of scalar curvature.
\begin{lma}\label{OB-R-lma}
Let $(M,g)$ be a \K manifold and $p\in M$. If $\OB(g(p))\geq -\mu$ for some $\mu \geq 0$. Then the scalar curvature $R_g(p)\geq -C_n\mu$ for some dimensional constant $C_n>0$.
\end{lma}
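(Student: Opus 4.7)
My plan is to work at $p$ in a unitary frame $\{e_1,\ldots,e_n\}$ for $T^{1,0}_pM$ and bound the two natural pieces of
\[
R_g(p)=\sum_{i,j=1}^n R_{i\bar i j\bar j}=\sum_i R_{i\bar i i\bar i}+\sum_{i\ne j}R_{i\bar i j\bar j}
\]
separately. The off-diagonal sum is handled directly by the OB hypothesis: taking $X=e_i$ and $Y=e_j$ for $i\ne j$ (an orthogonal unit pair in the Hermitian sense) gives $R_{i\bar i j\bar j}\ge -\mu$, hence $\sum_{i\ne j}R_{i\bar i j\bar j}\ge -n(n-1)\mu$.

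The substance of the lemma is the ``diagonal'' part $\sum_i R_{i\bar i i\bar i}$, which OB cannot see directly because the orthogonality condition $g(X,\bar Y)=0$ forbids $Y$ from having an $e_i$-component once $X=e_i$. To extract an estimate, I would feed OB the one-parameter family of orthogonal unit pairs
\[
X_\phi=\tfrac{1}{\sqrt 2}(e_i+e^{i\phi}e_j),\qquad Y_\phi=\tfrac{1}{\sqrt 2}(e_i-e^{i\phi}e_j),\qquad \phi\in\mathbb R,
\]
and expand $R(X_\phi,\bar X_\phi,Y_\phi,\bar Y_\phi)$ into the sixteen components $R_{a\bar b c\bar d}$ with $a,b,c,d\in\{i,j\}$. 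Using the K\"ahler symmetries $R_{a\bar b c\bar d}=R_{c\bar b a\bar d}=R_{a\bar d c\bar b}$ (which follow from the first Bianchi identity combined with $R(Z,W)=0$ for $Z,W\in T^{1,0}$), the eight ``one-index-off'' terms such as $R_{i\bar i i\bar j}$ cancel in pairs, the four $R_{i\bar i j\bar j}$-type terms cancel, and the expression collapses to
\[
R(X_\phi,\bar X_\phi,Y_\phi,\bar Y_\phi)=\tfrac14\bigl(R_{i\bar i i\bar i}+R_{j\bar j j\bar j}\bigr)-\tfrac12\operatorname{Re}\bigl(e^{-2i\phi}R_{i\bar j i\bar j}\bigr).
\]
Choosing $\phi$ so that $e^{-2i\phi}R_{i\bar j i\bar j}=|R_{i\bar j i\bar j}|$ and invoking OB $\ge -\mu$ then yields $R_{i\bar i i\bar i}+R_{j\bar j j\bar j}\ge -4\mu+2|R_{i\bar j i\bar j}|\ge -4\mu$.

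Summing this last inequality over all unordered pairs $i<j$, in which each index appears in exactly $n-1$ pairs, gives $(n-1)\sum_i R_{i\bar i i\bar i}\ge -2n(n-1)\mu$, i.e.\ $\sum_i R_{i\bar i i\bar i}\ge -2n\mu$. Combined with the off-diagonal bound from the first paragraph one obtains $\sum_{i,j}R_{i\bar i j\bar j}\ge -(2n+n(n-1))\mu=-n(n+1)\mu$, and since $R_g$ is a fixed dimensional multiple of $\sum_{i,j}R_{i\bar i j\bar j}$ this produces a permissible $C_n$. The only delicate step is the algebraic cancellation identity for $R(X_\phi,\bar X_\phi,Y_\phi,\bar Y_\phi)$; the rest is pure counting. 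The key conceptual input is the free phase $e^{i\phi}$ in the ansatz, which provides enough flexibility to absorb the sign of the otherwise uncontrolled $R_{i\bar j i\bar j}$ term.
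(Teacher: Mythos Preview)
Your argument is correct (for $n\ge 2$; for $n=1$ the hypothesis is vacuous and the conclusion fails, but that case is irrelevant here). The identity
\[
R(X_\phi,\bar X_\phi,Y_\phi,\bar Y_\phi)=\tfrac14\bigl(R_{i\bar i i\bar i}+R_{j\bar j j\bar j}\bigr)-\tfrac12\operatorname{Re}\bigl(e^{-2i\phi}R_{i\bar j i\bar j}\bigr)
\]
checks out using the K\"ahler symmetries $R_{i\bar j k\bar l}=R_{k\bar j i\bar l}=R_{k\bar l i\bar j}$, and the rest is bookkeeping. Your final constant $C_n=n(n+1)$ (up to the real-versus-complex scalar curvature convention) is explicit.

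However, there is nothing to compare against: the paper does not prove this lemma at all. It is stated with a direct citation to \cite[Theorem 1.1]{Niu2014} and no further argument. Niu's result is in fact stronger---she works under the weaker hypothesis of \emph{nonnegative quadratic orthogonal bisectional curvature} (an averaged condition on $\sum_{i,j}R_{i\bar i j\bar j}(a_i-a_j)^2$) rather than a pointwise OB lower bound---and her proof proceeds via an averaging/symmetrization argument over unitary frames. Your hands-on computation with the rotated pair $X_\phi,Y_\phi$ is a perfectly good self-contained substitute for the purposes of this paper, and has the advantage of being elementary and yielding an explicit constant; Niu's approach buys generality you do not need here.
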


Now we will show that the lowest eigenvalue of Ricci curvature can be controlled along the flow if the orthogonal bisectional curvature is bounded from below by some constant $-k$ along the flow. In the compact case with $k=0$, it was proved by Chen \cite{Chen2007}.\vskip .2cm
 
 For notational convenience, we will use $a\wedge b$ to denote $\min\{a,b\}$ for $a,b\in \mathbb{R}$.

\begin{prop}\label{Ric-pre}
Suppose $(M^n,g(t)),t\in [0,T]$ is a smooth solution to the \KR flow such that for some $p\in M$, $r>0$, we have $B_t(p,r)\Subset M$ for all $t\in [0,T]$. If there is $a,\mu_0,k>0$ such that
\begin{enumerate}
\item $|\mathrm{Rm}(x,t)|\leq at^{-1}$ on $B_t(p,r)$, $t\in (0,T]$;
\item $\Ric(g(0))\geq -\mu_0r^{-2}$ on $B_{g_0}(p,r)$;
\item $\mathrm{OB}(g(t))\geq -kr^{-2}$ on $B_t(p,r)$, $t\in [0,T]$.
\end{enumerate}
Then there is $ T_1(n,a,\mu_0,k)>0$ depending only on $n$ and the upper bounds of $a, k, \mu_0$ such that for all $x\in B_t(p,\frac{r}{8})$ and $t\in [0,T\wedge  (r^2T_1)]$,
$$\Ric(x,t)\geq -c_nr^{-2}\lf(k( a+kt)+\mu_0+1\ri)g_\ijb .$$
{ Here we denote $a\wedge b=\min\{a,b\}$.}
\end{prop}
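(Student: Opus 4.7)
\bigskip
\noindent\emph{Proof plan.}
The plan is to apply a localized tensor maximum principle to the tensor
\begin{equation*}
H_{i\bar j}(x,t) \;:=\; R_{i\bar j}(x,t) + F(t)\, g_{i\bar j}(x,t),
\qquad F(t):=c_n r^{-2}\bigl[k(a+kt)+\mu_0+1\bigr],
\end{equation*}
with $c_n$ a dimensional constant to be chosen large. By parabolic rescaling $\tilde g(t)=r^{-2}g(r^2t)$, I may normalize $r=1$. The initial hypothesis $\Ric(g_0)\ge -\mu_0$ ensures $H_{i\bar j}|_{t=0}\ge (F(0)-\mu_0)g_{i\bar j}>0$ for $c_n>1$, and the target estimate is equivalent to $H_{i\bar j}\ge 0$ on $B_t(p,1/8)\times(0,T_1]$.

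Under \KR flow we have $\heat g_{i\bar j}=-R_{i\bar j}$ and $\heat R_{i\bar j}=R_{i\bar j p\bar q}R^{q\bar p}-R_{i\bar p}R^{p}_{\bar j}$, so
\begin{equation*}
\heat H_{i\bar j}\;=\;R_{i\bar j p\bar q}R^{q\bar p}-R_{i\bar p}R^{p}_{\bar j}+F'(t)g_{i\bar j}-F(t)R_{i\bar j}.
\end{equation*}
Since the ball $B_t(p,1)$ is only compactly contained in $M$, I would pass from tensor to scalar by fixing a $(1,0)$-vector $V$ and studying $H_{i\bar j}V^i\bar V^j$ with the help of a space--time cutoff $\phi$ built from $d_{g(t)}(p,\cdot)$ via Perelman's estimate $\heat d_{g(t)}(p,\cdot)\le C\sqrt{a/t}$ (so $\phi\equiv 1$ on $B_t(p,1/8)$, $\phi=0$ outside $B_t(p,1/4)$, and $\heat\phi\ge -C_n$), or, equivalently, invoke Hochard's local maximum principle, which is precisely designed for this situation and used repeatedly in the paper.

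At a first interior contact $(x_0,t_0,V)$ with $|V|_g=1$ and $H_{i\bar j}V^i\bar V^j=0$, choose unitary coordinates at $(x_0,t_0)$ that diagonalize Ricci with $V=e_1$, so that $R_{p\bar q}=\lambda_p\delta_{pq}$, $\lambda_1=-F(t_0)$ and $\lambda_p\ge\lambda_1$ for all $p$. The Kähler algebra $R_{1\bar 1}=\sum_p R_{1\bar 1p\bar p}$ collapses the reaction term neatly:
\begin{equation*}
\bigl(R_{1\bar 1 p\bar q}R^{q\bar p}-R_{1\bar p}R^{p}_{\bar 1}\bigr)\Big|_{(x_0,t_0)}
\;=\;\sum_{p\neq 1}R_{1\bar 1 p\bar p}(\lambda_p-\lambda_1),
\end{equation*}
and here the orthogonal bisectional hypothesis gives $R_{1\bar 1 p\bar p}\ge -k$ for $p\neq 1$, while $\lambda_p-\lambda_1\ge 0$. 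Hence the reaction is bounded below by $-k(R-n\lambda_1)=-k(R+nF(t_0))$. Combined with Lemma~\ref{OB-R-lma} giving $R\ge -C_n k$ and the hypothesis $|\Rm|\le a/t$ giving $R\le C_n a/t$, the contact inequality $\heat H_{1\bar 1}(x_0,t_0)\le 0$ reduces to a pointwise ODE inequality in $F$:
\begin{equation*}
F'(t_0)+F(t_0)^2\;\le\;k\bigl(R+nF(t_0)\bigr)+(\text{cutoff correction}).
\end{equation*}

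I would close the argument by verifying that the choice $F(t)=c_n[k(a+kt)+\mu_0+1]$ makes this inequality inconsistent on $(0,T_1]$ for $T_1=T_1(n,a,\mu_0,k)$ sufficiently small and $c_n$ sufficiently large: $F'=c_n k^2$ absorbs the $-kR$ contribution at the Niu lower bound, $F^2\gtrsim F\cdot(nk)$ absorbs the $knF$ term, and the $c_nka$ term in $F$ is what buys enough slack on short time scales to tame the $k\cdot a/t$ divergence coming from the upper bound $R\le C_n a/t$. The main obstacle is exactly this short-time divergence: the $1/t$ curvature decay forbids a naive ODE comparison argument, and the $ka$ term in $F(t)$—together with the contact-time lower bound provided by Hochard's localization—is what ultimately handles it. A secondary difficulty is bookkeeping of the cutoff errors, but these are scalar-valued and of size $O(1)$, hence absorbed into the $c_n(\mu_0+1)$ term in $F$.
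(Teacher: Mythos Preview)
Your overall strategy---localize, consider a barrier $H_{i\bar j}=R_{i\bar j}+(\text{barrier})g_{i\bar j}$, and run the maximum principle at a first contact point where the lowest Ricci eigenvalue equals $-(\text{barrier})$---is the same as the paper's, and your reaction computation $\sum_{p\ne 1}R_{1\bar 1p\bar p}(\lambda_p-\lambda_1)\ge -k(R-n\lambda_1)$ is correct. The gap is in the choice of barrier.

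You take $F(t)=c_n[k(a+kt)+\mu_0+1]$, which is spatially constant, so $\heat F=F'(t)=c_nk^2$. At the contact point the contradiction you need is
\[
F'(t_0)+F(t_0)^2+(\text{cutoff errors})>kR(x_0,t_0)+nkF(t_0),
\]
but the right-hand side can be as large as $C_nka/t_0$ via $|\Rm|\le a/t$, while your left-hand side is bounded independently of $t_0$. No choice of $c_n$ makes $c_nk^2+(c_n ka)^2$ dominate $C_n ka/t_0$ for small $t_0$, so the argument does not close. Your remark that ``the $ka$ term in $F(t)$\dots handles it'' is exactly where the proof breaks: a constant cannot absorb a $1/t$ divergence, and Hochard's lemma (Lemma~\ref{OB-pre-Hochard}) gives a spatial upper bound $\varphi\le A\rho^{-2}$, not a lower bound on the contact time, so invoking it here does not supply the missing ingredient.

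The paper resolves this by making the barrier \emph{space--time dependent}: it sets $\mu(x,t)=kt\,\wt R(x,t)+\tfrac14 t^{1/4}+\mu_0+\e$ with $\wt R=R+c_1k\ge 0$ (cf.\ Lemma~\ref{OB-R-lma}). The point of the first summand is that
\[
\heat(kt\,\wt R)=k\wt R+kt\,|\Ric|^2\ge k\wt R\ge kR,
\]
which exactly cancels the dangerous $-kR$ coming from the reaction term; no upper bound on $R$ is ever used. The point of the second summand is that its time derivative contributes $+\,t_0^{-3/4}$, which dominates the cutoff/gradient error terms (these are shown, using $\Phi\gtrsim a^{-1}t_0\mu\gtrsim t_0^{5/4}$ and Shi's estimate $|\nabla\Rm|\le C t^{-3/2}$, to be of order $t_0^{-\alpha}$ with $\alpha<3/4$), yielding the contradiction for $t_0\le T_1(n,a,\mu_0,k)$. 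This coupling of the barrier to the evolving scalar curvature is the idea you are missing.
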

\begin{proof}
Here and below, $c_i,i=1,2,...$ will denote distinct positive constants depending only on $n$. By parabolic rescaling, we may assume $r=1$.
By \cite[Lemma 8.3]{Perelman2002} and the curvature assumption, the distance function $d_t(x,p)$ satisfies
\begin{align}
\heat d_t(x,p)\geq -\frac{c_2 a}{\sqrt{t}}
\end{align}
in the sense of barrier whenever $d_t(x,p)\geq \sqrt{t}$. Let $\phi$ be a cutoff function on $[0,+\infty)$ such that $\phi$ is identical $1$ on $[0,\frac{1}{4}]$, vanishes outside $[0,\frac{3}{4}]$ and satisfies
\begin{align}
100\leq \phi'\leq 0,\;\; \phi'' \geq -100.
\end{align}
Let $\Phi(x,t)=e^{-100mt}\phi^m(\eta(x,t))$ where $\eta(x,t)=d_t(x,p)+2c_2 a \sqrt{t}$ and $m\in \mathbb{N}$ is a large integer to be fixed later. Then the cutoff function $\Phi$ satisfies
\begin{align}\label{cut}
\heat \Phi \leq 0
\end{align}
in the sense of barrier. We may assume $\Phi$ to be smooth when we apply maximum principle, see \cite[section 7]{SimonTopping2016} for detailed exposition (see also \cite{HuangTam2018}).

By Lemma \ref{OB-R-lma}, we have $R_{g(t)}\geq -c_1k  $ in $B_t(p,1)$ for $t\in [0,T]$. Let
\be\label{e-tR}
\wt R=R+c_1k\ge0
\ee

For any $\e>0$, let $\mu(x,t)=kt \wt R  +\frac{1}{4}t^\frac{1}{4}+\mu_0+\e$ and consider the modified Ricci tensor $A_{i\bar j}=\Phi R_{i\bar j}+\mu g_{i\bar j}$.
We want to prove that $A_\ijb\ge0$ on $[0,T_1\wedge T]$, for some $T_1(n,k,a,\mu_0)>0$ depending only on $n$ and the upper bounds of $k, a, \mu_0$.

Clearly, $A_{i\bar j}>0$ for $t$ sufficiently small and outside the support of $\Phi$.
Hence if $A_\ijb\le 0$ somewhere, then there is   $t_0\in (0,T\wedge  T_1]$ and $x_0$ in which $\Phi(x_0,t_0)>0$,  $u\in T^{1,0}_{x_0}M$ so that  be such that $A>0$ on $[0,t_0)$ and for so that $A_{u\bar u}(x_0,t_0)=0$. We may assume $g_{u\bar u}(x_0,t_0)=1$ by rescaling. Extend $u$ around $(x_0,t_0)$ such that $\nabla u=0$ and $\heat u =0$ at $(x_0,t_0)$. Then at $(x_0,t_0)$, we have
\begin{equation}\label{e-A-1}
\begin{split}
0\geq &\heat A_{u\bar u}\\
=& R_{u\bar u}\heat \Phi   + \Phi \heat R_{u\bar u}-2{\bf Re}\left(g^{i\bar j} \Phi_i \cdot \partial_{\bar j}R_{u\bar u} \right)\\
&+\heat \mu -\mu R_{u\bar u}.\\
\end{split}
\end{equation}

We want to estimate each terms in the last line above.
Since $A_{u\bar u}=0$ at $(x_0,t_0)$, $R_{u\bar u}$ must be negative. Combines this with \eqref{cut}, we have
\be\label{e-A-2}
 R_{u\bar u}\heat \Phi\ge0.
\ee

At $(x_0,t_0)$, we choose an unitary frame $e_i$ such that $e_1=u$ and $R_{i\bar j}=\lambda_i \delta_{ij}$ with $\lambda_n\geq \lambda_{n-1}\geq \cdots \geq \lambda_1=R_{1\bar 1}$. This is possible because $u$ is the eigenvector of $A$ corresponding to the lowest eigenvalue $0$ at $(x_0,t_0)$. Hence,
\begin{equation}\label{e-A-3}
\begin{split}
\Phi \heat R_{1\bar 1}-\mu R_{1\bar 1}
=&\Phi \sum_{i=1}^n R_{1\bar 1 i\bar i} \lambda_i -\Phi\sum_{j=1}^n R_{1\bar j}R_{j\bar 1}-\mu\lambda_1\\
=&\Phi \sum_{i=1}^n (R_{1\bar 1i\bar i}+k)\lambda_i-\Phi k \sum_{i=1}^n \lambda_i-\Phi \lambda_1^2-\mu \lambda_1\\
\geq &\lambda_1 \Phi \sum_{i=1}^n (R_{1\bar 1i\bar i}+k)-\Phi k R\\
=& \Phi \lambda_1^2+ \Phi k \lambda_{1}-\Phi k R \\
\geq &-\Phi k R -\frac{1}{4} k^2.
\end{split}
\end{equation}

Here we have used $A_{1\bar 1}=0$ and hence $\Phi \lambda_1+\mu=0$ at $(x_0,t_0)$. Using  the fact that $\nabla A_{u\bar u}=0$ and $A_{u\bar u}=0$ at $(x_0,t_0)$, we have
\begin{equation}\label{e-A-4}
\begin{split}
-2{\bf Re}\left(g^{i\bar j} \Phi_i \cdot \partial_{\bar j}R_{u\bar u} \right)=&2\frac{|\nabla \Phi|^2}{\Phi }R_{u\bar u}+\frac{2}{\Phi}{\bf Re}\left(g^{i\bar j}\mu_i \Phi_{\bar j} \right)\\
\geq &-2\frac{|\nabla \Phi|^2}{\Phi^2} \mu -2\frac{|\nabla \Phi||\nabla \mu|}{\Phi}\\
\geq & -\frac{c_3 m^2 \mu}{\Phi^\frac{2}{m}}-\frac{c_3m|\nabla \mu|}{\Phi^\frac{1}{m}}\\
\ge&-\frac{c_4 a^\frac2m m^2 \mu_0^{1-\frac2m}}{t_0^\frac{2}{m}}-
\frac{c_4ma^\frac1m|\nabla \mu|}{t_0^{\frac1m(1+\frac14)}}
\end{split}
\end{equation}
where we have used the fact that    at $(x_0,t_0)$,
$
-\Phi R_{1\bar 1}=\mu$
and the assumption $|\Rm(g(t))|\le a/t$  at $(x_0,t_0)$ so that $$\Phi\ge ca^{-1}t_0\mu\ge \frac14 ca^{-1}t_0^{1+\frac14}$$ at $(x_0,t_0)$, for some constant $c$ depending only on $n$.

\be\label{e-A-5}
\begin{split}
\heat \mu=&k\wt R+t_0^{-\frac34}+kt_0|\Ric|^2
\ge k\wt R+t_0^{-\frac34}.
\end{split}
\ee
On the other hand,   by Shi's estimate \cite{Shi1989} (see also \cite[Theorem 1.4]{CaoChenZhu2008}), we may assume that $|\nabla \mathrm{Rm}|\leq C(n,a)t^{-3/2}$ on $B_t(p,\frac{3}{4})$, $t\in (0,T\wedge  T_1]$.
Hence by \eqref{e-A-1}--\eqref{e-A-5}, at $(x_0,t_0)$ we have
\be\label{e-concl-sum I}
\begin{split}
0\ge & -\Phi k R -\frac{1}{4} k^2-\frac{c_4 a^\frac2m m^2 \mu_0^{1-\frac2m}}{t_0^\frac{2}{m}}-
\frac{c_4ma^\frac1m|\nabla \mu|}{t_0^{\frac1m(1+\frac14)}}+k\wt R+t_0^{-\frac34}\\
\ge&-\frac14k^2-\frac{c_4 a^\frac2m m^2 \mu_0^{1-\frac2m}}{t_0^\frac{2}{m}}-
\frac{D_1 c_4ma^\frac1m  }{t_0^{\frac1m(1+\frac14)+\frac12}} +t_0^{-\frac34}\\
\end{split}
\ee
where $D_1$ is a constant depending only on $n,a$.

Therefore, if we choose $m=10$, then \eqref{e-concl-sum I} implies $t_0\geq T_1(n,k,\mu_0,a)$ which is a positive constant depending only on the $n$ and the upper bounds of $k, \mu_0, a$. Hence we have
$
A_\ijb\ge0
$
on $B_t(p,\frac18)$ and $t\in [0,T_1\wedge T]$. Since $|\Rm(g(t))|\le at^{-1}$, we conclude that in $B_t(p,\frac18)$ and $t\in [0,T_1\wedge T]$, we have
\bee
\begin{split}
R_\ijb\ge &-\mu g_\ijb\\
\ge &\lf(-tk(c_5a\frac1t+c_1k)-\mu_0-\frac14 t^\frac14-\e\ri)g_\ijb\\
=&-c_6\lf(k( a+kt)+\mu_0+1\ri)g_\ijb-\e g_\ijb.
\end{split}
\eee
provided $T_1\le 1$.
This completes the proof by letting $\e\rightarrow 0$.
\end{proof}

{ Before we state the next curvature estimate, let us recall the following local maximum principle, which was implicitly proved in \cite[Proposition II.2.6]{Hochard2019}. For a more direct proof, we refer interested readers to \cite[Section 4]{LeeTam2020}.

\begin{lma}\label{OB-pre-Hochard}
For $m\in \mathbb{N}, \a>0, c_0>0$, there is $A(m,\a, \lambda)>0$ such that the following holds: Suppose $M^m$ is a $m$-dimensional manifold and $g(t)$ is a smooth solution to the Ricci flow with $g(0)=g_0$. Let $p\in M$ so that $B_{g_0}(p,1)\Subset M$. Suppose for all $(x,t)\in B_{g_0}(p,1)\times [0,T]$,
\begin{enumerate}
\item $\Ric(g(0))\geq -1$;
\item $|\Rm(g(t))|\leq \a t^{-1}$;
\item $\mathrm{inj}_{g(t)}(x)\geq \sqrt{\a^{-1}t}$.
\end{enumerate}
Then for any nonnegative Lipschitz function $\varphi$    on $B_{g_0}(p,1)\times [0,    T]$,  satisfying
$$
\heat \varphi\le \tR\varphi+\lambda\varphi^2,
$$
 in the sense of barrier such that
 \bee
 \left\{
   \begin{array}{ll}
        \varphi \leq \a t^{-1}, &\hbox{on $B_{g_0}(p,1)\times(0,T]$}; \\
     \varphi(0)\le 1, &\hbox{on $B_{g_0}(p,1)$.}
   \end{array}
 \right.
 \eee
 where  $\tR(x,t)$ is the scalar curvature of $g(t)$. Then   $\varphi(x,t)\le A\rho^{-2}(x)$ for $(x,t)\in B_{g_0}(p,1)\times [0,T ]$
where
  $\rho(x)=\sup\{ r\in (0,1]: B_{g_0}(x,r)\Subset B_{g_0}(p,1)\}.$
\end{lma}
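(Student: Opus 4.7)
The strategy, following Hochard \cite{Hochard2019} and using the heat-kernel framework of Bamler--Cabezas-Rivas--Wilking \cite{BCRW}, is to localize the maximum principle by testing the evolution inequality against a conjugate heat kernel rather than a classical spatial cutoff. Note that $\heat\varphi\le\tR\varphi+\lambda\varphi^2$ is already in the form dual to the conjugate heat equation $(\p_t+\Delta-\tR)K=0$: when one pairs against $K$, the term $\tR\varphi K$ is exactly absorbed by $\p_t dV_{g(t)}=-\tR\,dV_{g(t)}$, leaving only the quadratic tail $\lambda\varphi^2$ to control.

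First I would fix an arbitrary interior point $(x_0,t_0)\in B_{g_0}(p,1)\times(0,T]$ and parabolically rescale by $\rho(x_0)^{-2}$; this reduces the claim to a uniform bound $\varphi(x_0,t_0)\le A$ under the rescaled assumptions on a ball with $\rho(x_0)=1$. Hypotheses (2) and (3) then supply the bounded geometry at parabolic scale $\sqrt{t/\alpha}$ that is needed by \cite{BCRW} to produce a conjugate heat kernel $K(\cdot,\cdot;x_0,t_0)$ on this ball with two-sided Gaussian bounds at scale $\sqrt{(t_0-t)/\alpha}$ and exponentially small tails outside the ball.

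Setting $\Phi(t)=\int_M \varphi(y,t)\,K(y,t;x_0,t_0)\,dV_{g(t)}(y)$ and using $\p_t dV_{g(t)}=-\tR\,dV_{g(t)}$, the conjugate heat equation for $K$, and integration by parts against the inequality for $\varphi$, one finds
\begin{equation*}
\frac{d\Phi}{dt}\le \lambda\int_M \varphi^2\,K\,dV_{g(t)}+\mathcal{E}(t),
\end{equation*}
where $\mathcal{E}(t)$ is an exponentially small boundary error from the spatial localization of $K$. As $t\to t_0^-$, $K(\cdot,t;x_0,t_0)\to\delta_{x_0}$, so $\Phi(t_0)=\varphi(x_0,t_0)$, while $\Phi(0)\le 1$ follows from the conjugate heat kernel mass bound and the initial condition $\varphi(\cdot,0)\le 1$.

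The main obstacle is closing the Gronwall step for this ODE: inserting only the assumed bound $\varphi\le\alpha/t$ yields $d\Phi/dt\le(\lambda\alpha/t)\Phi$, which diverges logarithmically at $t=0$. The remedy, following Hochard, is a dyadic iteration on $[2^{-k-1}t_0,2^{-k}t_0]$, where the BCRW parabolic Harnack / mean-value inequality controls $\sup\varphi$ by a uniform multiple of $\Phi$ on the previous scale, so that $\lambda\varphi^2$ linearizes to $\lambda(\sup\varphi)\cdot\varphi$ with a geometric-series-summable coefficient per scale; this produces $\Phi(t_0)\le A(m,\alpha,\lambda)$. The most delicate technical point is ensuring that the BCRW heat-kernel constants, the localization error $\mathcal{E}$, and the accumulation across the iteration all combine into a single constant depending only on $(m,\alpha,\lambda)$; this is essentially what is done in \cite[Proposition II.2.6]{Hochard2019}, and more directly in \cite[Section 4]{LeeTam2020}.
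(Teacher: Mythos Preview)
The paper does not actually prove this lemma: it records it as a citation, saying it ``was implicitly proved in \cite[Proposition II.2.6]{Hochard2019}'' and referring to \cite[Section 4]{LeeTam2020} for a direct argument. Your sketch follows precisely these cited sources (conjugate-heat-kernel localization via the BCRW Gaussian bounds, then a dyadic/iteration argument to defeat the $1/t$ blow-up), so in spirit you are aligned with what the paper invokes rather than supplying an alternative route.

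One technical slip is worth flagging. With the \emph{standard} conjugate heat kernel $(\partial_t+\Delta-\tR)K=0$ that you write down, the identity one gets after integrating by parts is
\[
\frac{d}{dt}\int \varphi\,K\,dV_{g(t)}=\int\bigl[(\partial_t-\Delta)\varphi\bigr]K\,dV_{g(t)}\le \int\bigl(\tR\varphi+\lambda\varphi^2\bigr)K\,dV_{g(t)},
\]
so the term $\tR\varphi$ from the evolution inequality is \emph{not} absorbed; what cancels is only the $\tR K$ coming from $\partial_t K$ against the $-\tR\,dV$ from $\partial_t dV$. The operator whose adjoint kills the $\tR\varphi$ term is $(\partial_t-\Delta-\tR)$, and its conjugate is the \emph{plain} backward heat equation $(\partial_t+\Delta)K=0$ (no scalar-curvature correction); pairing against that kernel indeed gives $\frac{d\Phi}{dt}\le\lambda\int\varphi^2K\,dV+\mathcal{E}$. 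This is the version used in \cite{LeeTam2020}, and the BCRW Gaussian estimates still apply since the two kernels differ by a zeroth-order potential controlled by $|\Rm|\le\alpha t^{-1}$. Apart from this bookkeeping point, your outline is correct and matches the references the paper cites.
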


}

  We have the following local persistence of curvature conditions. In the complete bounded curvature case, the preservation of nonnegative orthogonal bisectional was first discovered by Cao-Hamilton, see \cite{Cao2013}.
  \begin{prop}\label{OB-esti}
For any $n,L>0$, there exist  $C_0(n,L)>1,  T_2(n,L)>0$ such that the following holds: Suppose $(M^n,g(t)),t\in [0,T]$ is a smooth solution to the \KR flow with complex dimension $n$. Let $p\in M$ be such that $\mathrm{OB}(g_0(x))\geq -kr^{-2}$ and $\Ric(g_0)(x)\geq -kr^{-2}$ on $B_{g_0}(p,k^{-\frac12}r)\Subset M$ for some $k>0$. Suppose that for all $(x,t)\in B_{g_0}(p,k^{-\frac12}r)\times (0,T]$, we have
\begin{enumerate}
\item  $|\mathrm{Rm}(x,t)|\leq Lt^{-1}$;
\item   $\mathrm{inj}_{g(t)} (x)\geq \sqrt{L^{-1}t}$.
\end{enumerate}
Then  for all $t\in T\wedge  (k^{-1}r^2 T_2)$, we have
\begin{enumerate}
\item[(a)]  $\mathrm{OB}\left (g(p,t)\right)\geq -C_0(n,L)kr^{-2}$;
\item[(b)]  $\Ric(g(p,t))\ge -C_0(n,L)kr^{-2}$.
\end{enumerate}
\end{prop}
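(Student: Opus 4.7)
By parabolic rescaling $g(t)\mapsto k r^{-2}\,g(k^{-1}r^2 t)$, I may assume $k=1$ and $r=1$, so that $\mathrm{OB}(g_0), \Ric(g_0)\geq -1$ on $B_{g_0}(p,1)\Subset M$, and $|\Rm(g(t))|\leq L/t$ and $\mathrm{inj}_{g(t)}\geq\sqrt{L^{-1}t}$ on this ball for $t\in(0,T]$. The task reduces to producing constants $C_0(n,L), T_2(n,L)>0$ with $\mathrm{OB}(g(p,t)), \Ric(g(p,t))\geq -C_0$ for $t\in[0,T\wedge T_2]$.

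The main step is the lower bound on $\mathrm{OB}$, which I plan to obtain by applying Lemma~\ref{OB-pre-Hochard} (with real dimension $m=2n$) to the $\mathrm{OB}$ defect
$$
\varphi(x,t):=\max\Bigl\{0,\ -\inf_{\substack{X,Y\in T^{1,0}_x M\setminus\{0\}\\ g(X,\bar Y)=0}} \frac{R(X,\bar X,Y,\bar Y)}{B(X,\bar X,Y,\bar Y)}\Bigr\}.
$$
This is a locally Lipschitz, nonnegative function with $\varphi(\cdot,0)\leq 1$ on $B_{g_0}(p,1)$ by hypothesis (ii) of the rescaled data, and $\varphi\leq c_n L/t$ by the curvature bound. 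Following the Cao--Hamilton analysis of the preservation of $\mathrm{OB}\geq 0$ along the \KR flow (see~\cite{Cao2013}), Hamilton's tensor maximum principle yields, in the barrier sense wherever $\varphi>0$, the differential inequality
$$
\heat\varphi \leq R\cdot\varphi + C_n \varphi^2.
$$
The algebraic content is that the reaction term in the evolution of $R(X,\bar X,Y,\bar Y)$ at an $\mathrm{OB}$-minimizing pair $(X,Y)$ with $g(X,\bar Y)=0$, which Cao--Hamilton show to be nonnegative when $\mathrm{OB}\geq 0$, picks up only corrections of the form $R\varphi$ and $\varphi^2$ when one replaces $R_{i\bar j k\bar l}$ by $R_{i\bar j k\bar l}+\varphi B_{i\bar j k\bar l}$. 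Lemma~\ref{OB-pre-Hochard} then yields $\varphi(x,t)\leq A\rho(x)^{-2}$ on $B_{g_0}(p,1)\times[0,T]$ for some $A=A(n,L)$, where $\rho(x)=\mathrm{dist}_{g_0}(x,\partial B_{g_0}(p,1))$. In particular $\varphi(p,t)\leq A$, and $\varphi\leq 16A$ on $B_{g_0}(p,3/4)$.

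For the Ricci lower bound I invoke Proposition~\ref{Ric-pre}. Combining $|\Rm|\leq L/t$ with Perelman's distance-distortion estimate (already used in the proof of Proposition~\ref{Ric-pre}), there exists $r'=r'(n,L)>0$ such that $B_t(p,r')\Subset B_{g_0}(p,3/4)$ for all $t\leq T_2$; hence the previous step yields $\mathrm{OB}(g(t))\geq -16A$ on $B_t(p,r')$. Applying Proposition~\ref{Ric-pre} with radius $r'$, $a=L$, $\mu_0=r'^{\,2}$, and $k=16A\,r'^{\,2}$ then gives $\Ric(g(p,t))\geq -C(n,L)$ on the desired time interval. Setting $C_0=\max\{A,C\}$ and undoing the initial rescaling concludes the proof. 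The main obstacle is the rigorous derivation of the evolution inequality for $\varphi$: one has to combine the Cao--Hamilton preservation computation with Hamilton's tensor maximum principle in barrier form, carefully tracking the $R\varphi$ and $\varphi^2$ contributions that appear once the minimizing direction has strictly negative value.
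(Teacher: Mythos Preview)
Your proposal is correct and follows essentially the same route as the paper: after the same rescaling, the paper defines the $\mathrm{OB}$-defect $\ell(x,t)=\inf\{l\ge 0:\mathrm{OB}(g(x,t))\ge -l\}$, invokes the evolution inequality $\heat\ell\le \tR\,\ell+c_n\ell^2$ (citing \cite[(6.3)]{NiLi2019} rather than re-deriving it from the Cao--Hamilton computation as you propose), and then applies Lemma~\ref{OB-pre-Hochard} to obtain~(a); for~(b) it combines~(a) with the shrinking-balls Lemma~\ref{l-balls} and Proposition~\ref{Ric-pre}, exactly as you do via Perelman's distance-distortion estimate. The only substantive difference is that the paper outsources the algebraic verification of the barrier inequality to Li--Ni, whereas you flag it as the main step to be carried out by hand.
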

\begin{proof}  By rescaling, we may assume that $kr^{-2}=1$. For $(x,t)\in B_{g_0}(p,1)\times [0,T]$, let
\bee
\ell(x,t)=\inf\{l\ge0|\ \mathrm{OB}(g_t(x,t))\ge -l\}.
\eee
By \cite[(6.3)]{NiLi2019}, $\ell(x,t)$ satisfies
\be\label{e-NiLi}
\heat \ell\le {\tR}\ell+c_n\ell^2
\ee
in the sense of barrier. Here $\tR(x,t)$ is the scalar curvature and $c_n$ is a positive constant depending only on $n$. Then $(a)$ follows from Lemma \ref{OB-pre-Hochard}. And $(b)$ follows from $(a)$, the shrinking ball Lemma \ref{l-balls} and Proposition \ref{Ric-pre}.
\end{proof}

By letting $k\to 0$, we have the following:

\begin{cor}\label{c-NOB} Let $(M^n,g(t)),t\in [0,T]$ be a smooth   complete  solution to the \KR flow with complex dimension $n$ with $g(0)=g_0$. Suppose $\OB(g_0)\ge0, \Ric(g_0)\ge 0$ and suppose
$$
|\Rm(g(t))|\le Lt^{-1},\   \mathrm{inj}_{g(t)}(x)\ge \sqrt{L^{-1}t}
$$
for all $(x,t)\in M\times(0,T]$. Then $\OB(g(t))\ge 0, \Ric(g(t))\ge 0$.
\end{cor}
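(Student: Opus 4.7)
The plan is to obtain this corollary as a direct limiting case of Proposition \ref{OB-esti}, exploiting the crucial fact that both the constant $C_0(n,L)$ and the time threshold $T_2(n,L)$ depend only on $n$ and $L$, and not on the ``defect'' parameter $k$ or the radius $r$. This independence is precisely what allows us to send $k\to 0^+$.

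Concretely, I would fix an arbitrary point $p\in M$ and an arbitrary time $t_0\in(0,T]$, and for each $k>0$ apply Proposition \ref{OB-esti} with $r=1$. Since $\OB(g_0)\ge 0\ge -k$ and $\Ric(g_0)\ge 0\ge -k$ everywhere on $M$, the curvature hypotheses are trivially satisfied on the ball $B_{g_0}(p,k^{-1/2})$; completeness of $(M,g_0)$ together with Hopf--Rinow ensures $B_{g_0}(p,k^{-1/2})\Subset M$; and the assumed global bounds $|\Rm(g(t))|\le Lt^{-1}$ and $\mathrm{inj}_{g(t)}\ge\sqrt{L^{-1}t}$ automatically restrict to this ball. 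Consequently, the proposition yields
\[
\OB(g(p,t))\ge -C_0(n,L)\,k,\qquad \Ric(g(p,t))\ge -C_0(n,L)\,k
\]
for every $t\in[0,\,T\wedge (k^{-1}T_2(n,L))]$.

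Now, for any prescribed $t_0\in(0,T]$, take $k$ small enough that $k^{-1}T_2(n,L)\ge t_0$, which is possible because $T_2$ is a positive constant independent of $k$. Both bounds above then hold at the point $(p,t_0)$, and letting $k\to 0^+$ forces $\OB(g(p,t_0))\ge 0$ and $\Ric(g(p,t_0))\ge 0$. Since $p$ and $t_0$ were arbitrary, the conclusion follows.

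There is no real obstacle: Proposition \ref{OB-esti} has been set up with exactly the right scaling so that the limit $k\to 0$ is admissible. The only points worth double-checking are (i) that completeness of $M$ legitimately gives $B_{g_0}(p,k^{-1/2})\Subset M$ for every $k>0$, and (ii) that the hypotheses of the proposition truly do not require a \emph{strict} positivity of $k$ for the constants to be finite --- both of which are immediate from the statement as recorded above.
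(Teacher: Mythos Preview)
Your proposal is correct and follows exactly the paper's approach: the paper simply remarks that the corollary is obtained ``by letting $k\to 0$'' in Proposition \ref{OB-esti}, and your write-up is a careful unpacking of precisely that limiting argument. There is nothing to add or correct.
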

\begin{rem} By modifying the method in \cite{LeeTam2017-1}, one can prove that the nonnegativity of orthogonal bisectional curvature will be preserved under the \KR flow $g(t)$ which satisfies $|\Rm(g(t))|\le a/t$ for $t>0$ without the assumption on injectivity radius.
\end{rem}

The following curvature estimates of \KR flow can be proved using work by Ni \cite{Ni2005} and Ni-Li \cite{NiLi2019} on the classification of ancient \KR flow solutions with non-negative curvature.
\begin{prop}\label{pseudo-OB}
For any $n,v>0$, there is $ T_3(n,v), L(n,v)>0$ so that  the following holds. Suppose $(M^n,g(t)),t\in [0,T]$ is a smooth solution to the \KR flow (not necessarily complete) such that for some $p\in M$, $r>0$, we have $B_t(p,r)\Subset M$ for all $t\in [0,T]$ and
\begin{enumerate}
\item[(a)] $V_{g_0}(p,r)\geq vr^{2n}$;
\item[(b)] $\Ric(g(t))\geq -r^{-2}$ on $B_t(p,r)$ for all $t\in [0,T]$;
\item[(c)] $\mathrm{OB}(g(t))\geq -r^{-2}$ on $B_t(p,r)$ for all $t\in [0,T]$.
\end{enumerate}
Then for all $t\in [0,T\wedge ( r^2T_3)]$ and $x\in B_{t}(p,\frac{r}{2})$,
$$|\mathrm{Rm}(x,t)|\leq \frac{L}{t}.$$
Moreover for $x\in B_t(p,\frac{r}{4})$ and $t\in [0,T\wedge (r^2T_3)]$,
\begin{enumerate}
\item  $|\nabla \mathrm{Rm}(x,t)|\leq {L}{t^{-3/2}}$
\item $\mathrm{inj}_{g(t)}(x)\geq \sqrt{L^{-1}t}$
\end{enumerate}
\end{prop}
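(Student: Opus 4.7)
The plan is to prove the curvature estimate by a contradiction/blow-up argument based on point-picking, relying on the rigidity of ancient \KR flows with non-negative orthogonal bisectional curvature established by Ni \cite{Ni2005} and Ni-Li \cite{NiLi2019}; the gradient and injectivity radius estimates will follow as standard consequences.

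By parabolic rescaling we may assume $r=1$. Suppose for contradiction that there is no choice of uniform $L$, $T_3$ for which the curvature bound holds. Then we can extract a sequence of smooth \KR flows $(M_i^n, g_i(t))$, $t\in [0, T_i]$, satisfying (a)-(c) with $r=1$, and points $(x_i, t_i)\in B_{t_i}(p_i, 1/2)\times (0, T_i]$ with $Q_i := t_i |\Rm(g_i)(x_i,t_i)|\to \infty$ and $t_i\to 0$. Using a Perelman-Hochard-type point-selection argument applied in the parabolic neighborhood $B_{t_i}(p_i,1/2)\times (0,t_i]$, we may, after replacing $(x_i,t_i)$ by suitable nearby points, arrange that $|\Rm(g_i(s))|\le 2 |\Rm(g_i)(x_i, t_i)|$ on a backward parabolic region whose rescaled size tends to infinity.

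Rescale by
\[
\tilde g_i(s) = |\Rm(g_i)(x_i, t_i)| \cdot g_i\!\left(t_i + \frac{s}{|\Rm(g_i)(x_i, t_i)|}\right),
\]
so that $|\Rm(\tilde g_i)(x_i, 0)|=1$, $|\Rm(\tilde g_i(s))|\le 2$ on a backward parabolic region of radius $\to\infty$, and $\OB(\tilde g_i(s)), \Ric(\tilde g_i(s)) \ge -|\Rm(g_i)(x_i,t_i)|^{-1}\to 0$. Non-collapsing of $\tilde g_i$ at the base point is obtained by combining the initial volume bound (a), Bishop-Gromov at $t=0$ using $\Ric(g_i(0))\ge -1$, and Perelman's $\kappa$-non-collapsing theorem applied to $g_i$. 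Hamilton's compactness theorem then produces, after passing to a subsequence, a pointed smooth Cheeger-Gromov limit $(M_\infty, g_\infty(s), x_\infty)$ which is a complete ancient \KR flow on $s\in(-\infty, 0]$, with bounded curvature, uniform non-collapsing, $|\Rm(g_\infty)(x_\infty, 0)| = 1$, and by Corollary \ref{c-NOB} satisfying $\OB(g_\infty), \Ric(g_\infty)\ge 0$ for all $s$. The classification theorems of \cite{Ni2005, NiLi2019} (combined with the non-collapsing) then force $g_\infty$ to be flat, contradicting the normalization $|\Rm(g_\infty)(x_\infty, 0)|=1$. This establishes $|\Rm(g(t))|\le L/t$ on $B_t(p,r/2)$ for $t\le T\wedge(r^2 T_3)$.

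The gradient estimate $|\nabla \Rm|\le L t^{-3/2}$ on $B_t(p,r/4)$ then follows from Shi's local derivative estimate applied on the larger ball $B_t(p, r/2)$, using Lemma \ref{l-balls} to ensure containment between the shrinking balls at different times. The injectivity radius bound $\mathrm{inj}_{g(t)}(x)\ge \sqrt{L^{-1}t}$ on $B_t(p,r/4)$ follows by propagating a volume lower bound from $t=0$ to positive times using the Ricci lower bound and the now-established curvature bound, and then invoking the Cheeger-Gromov-Taylor injectivity radius estimate. The main obstacle will be the non-collapsing step for the rescaled flows and a careful verification that the blow-up limit satisfies the precise hypotheses of the Ni--Ni-Li rigidity results, since the curvature lower bounds only pass to $\ge 0$ on the limit and the non-collapsing has to be tracked through the rescaling.
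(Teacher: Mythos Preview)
Your overall strategy---contradiction, point-picking, parabolic blow-up, and appeal to the Ni--Li rigidity for ancient \KR flows with $\OB\ge 0$, $\Ric\ge 0$ and maximal volume growth---is exactly the route taken in the paper. The gradient and injectivity radius estimates at the end are also handled the same way (Shi's local estimate and Cheeger--Gromov--Taylor after propagating a volume lower bound).

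There is, however, a genuine gap in your non-collapsing step. You invoke Perelman's $\kappa$-non-collapsing theorem to transfer the initial volume bound to the blow-up time, but that theorem requires a \emph{complete} (or closed) Ricci flow: the monotonicity of the $\mathcal W$-functional or the reduced volume involves integration over the whole manifold. Here the flow is explicitly allowed to be incomplete; only $B_t(p,r)\Subset M$ is assumed. So Perelman's result is not available, and this is precisely the point you flag as the ``main obstacle'' without resolving it. The paper avoids this by using a purely local volume-propagation lemma (Simon--Topping \cite[Lemma~2.3]{SimonTopping2016}, cf.\ \cite[Corollary~6.2]{Simon2012}): once you have chosen $t_k$ so that $|\Rm(g_k(t))|\le L_k t^{-1}$ on $B_{g_k(t)}(p_k,\tfrac12)$ for $t\in(0,t_k)$, this together with $\Ric\ge -1$ gives $V_{g_k(t)}(p_k,1)\ge v_1(n,v)$ for $t\le t_k$. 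Bishop--Gromov \emph{at the point-picked time} $\tilde t_k$ (not at $t=0$) then yields $V_{g_k(\tilde t_k)}(x_k,r)\ge v_2 r^{2n}$ for all $r\in(0,\tfrac14]$, which after rescaling by $Q_k\to\infty$ becomes non-collapsing at \emph{all} scales and hence maximal volume growth of the limit---exactly what the Ni--Li result \cite[Proposition~6.1]{NiLi2019} needs.

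A minor remark: your appeal to Corollary~\ref{c-NOB} on the limit is unnecessary (and out of logical order in the paper). You already observed that $\OB(\tilde g_i(s)),\Ric(\tilde g_i(s))\ge -Q_i^{-1}\to 0$ on the rescaled sequence, so $\OB(g_\infty),\Ric(g_\infty)\ge 0$ follows directly from smooth convergence.
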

\begin{proof}
The proof on the estimates of $|\Rm(g(t))|$ follows verbatim from that in \cite[Lemma 2.1]{SimonTopping2016}.  For reader's convenience, we sketch the proof here. We first establish the estimate of $|\Rm(x,t)|$. By parabolic rescaling, it suffices to consider the case $r=1$. Suppose the conclusion is false, then for any $L_k\rightarrow +\infty$, there exists sequence of \KR flow $(M^n_k,g_k(t),p_k)$, $t\in [0,T_k]$ so that the curvature conclusion fail in an arbitrarily short time. We may assume $L_kT_k\rightarrow 0$. By smoothness of each \KR flow, we can choose $t_k\in (0,T_k]$ so that
\begin{enumerate}
\item[(i)] $B_{g_k(t)}(p_k,1)\Subset M_k$ for $t\in [0,t_k]$;
\item[(ii)] $V_{g_k(0)}(p_k,1)\geq v$;
\item[(iii)] $\Ric(g_k(t))\geq -1$ on $B_{g_k(t)}(p_k,1)$, $t\in [0,t_k]$;
\item[(iv)] $\OB(g_k(t))\geq -1$ on $B_{g_k(t)}(p_k,1)$, $t\in [0,t_k]$;
\item[(v)]$|\Rm(g_k(t))|<L_k t^{-1}$ on $B_{g_k(t)}(p_k,\frac{1}{2})$, $t\in (0,t_k)$;
\item[(vi)]$|\Rm(g_k(z_k,t_k))|=L_k t_k^{-1}$ for some $z_k\in \overline{B_{g_k(t_k)}(p_k,\frac{1}{2})}$.
\end{enumerate}

By \cite[Lemma 2.3]{SimonTopping2016}, we may adjust $T_k$ (depending also on $L_k$) such that in addition we could have
\begin{equation}
V_{g_k(t)}(p_k,1)\geq v_1(n,v)>0
\end{equation}
for $t\in [0,T_k]$. By (vi) and $L_kt_k\rightarrow 0$, \cite[Lemma 5.1]{SimonTopping2016} implies that for $k$ sufficiently large, we can find $\tilde t_k\in (0,t_k]$ and $x_k\in B_{g_k(\tilde t_k)}(p_k,\frac{3}{4}-\frac{1}{2}\b_n\sqrt{L_k \tilde t_k})$ such that
\begin{equation}\label{CURVATURE}
|\Rm(g_k(x,t))|\leq 4|\Rm(g_k(x_k,\tilde t_k)|=4Q_k
\end{equation}
whenever $d_{g_k(\tilde t)}(x,x_k)<\frac{1}{8}\b_n L_kQ_k^{-1/2}$ and $\tilde t_k-\frac{1}{8}L_kQ_k^{-1}\leq t\leq \tilde t_k$ where $\tilde t_k Q_k\geq L_k\rightarrow +\infty$.

By Ricci lower bound of $g_k(t)$ and volume comparison, we may infer that for all $r\in (0,\frac{1}{4})$,
\begin{equation}\label{V}
\frac{V_{g_k(\tilde t_k)}(x_k,r)}{r^{2n}}\geq v_2(n,v)>0.
\end{equation}

Consider the parabolic rescaling centred at $(x_k,\tilde t_k)$, namely $\tilde g_k(t)=Q_kg(\tilde t_k+Q_k^{-1}t)$ for $t\in [-\frac{1}{8}L_k,0]$ so that $|\Rm_{\tilde g_k(0)}(x_k)|=1$. By \eqref{V} and the result of Cheeger-Gromov-Taylor \cite{CheegerGromovTaylor1982}, the injectivity radius of $\tilde g_k(0)$ at $x_k$ is bounded from below uniformly. Together with the curvature estimates inherited from \eqref{CURVATURE}, we may apply Hamilton's compactness \cite{Hamilton1995} so that $(M_k,\tilde g_k(t),x_k)$ converges in the Cheeger-Gromov sense to $(M_\infty,g_\infty(t),x_\infty)$ which is a complete non-flat ancient solution to the \KR flow with bounded curvature. By $(iii)$ and $(iv)$, $g_\infty(t)$ has nonnegative orthogonal bisectional curvature and Ricci curvature. By \eqref{V}, $g_\infty(t)$ is also of maximal volume growth which contradicts with \cite[Proposition 6.1]{NiLi2019}. This proves the curvature estimates.

The higher order estimate on ball of smaller radius follows from Shi's higher order local estimate \cite{Shi1989} by choosing a larger $L$, see \cite[Theorem 1.4]{CaoChenZhu2008} for the version that we used. By \cite[Lemma 2.3]{SimonTopping2016} (see also \cite[Corollary 6.2]{Simon2012}) and further shrinking $T_3$, we have $V_{g(t)}(p,1)\geq v_1(n,v)$ for $t\in [0,T_3]$. By volume comparison, we may further conclude that $V_{g(t)}(x,r)\geq v_2(n,v)r^{2n}$ for all $x\in B_{g(t)}(p,\frac{1}{4}),t\in [0,T_3]$ and $r\in (0,\frac{1}{4}]$. The injectivity radius lower bound can be proved using curvature upper bound and the result of Cheeger-Gromov-Taylor \cite{CheegerGromovTaylor1982}.
\end{proof}

\section{Existence of \KR flow}\label{s-existence}

\subsection{Existence of \KR flow on $B_{g_0}(R)\times[0,T]$}

In this section, we will prove the following slightly more general pyramid extension of \KR flow which in turn implies Theorem \ref{KRF-exist}. This can be viewed as a \K  analogy to \cite[Lemma 4.1]{McLeodTopping2019}, see also \cite[Lemma 2.1]{McLeodTopping2018} for three manifolds with Ricci curvature bounded from below. We proceed as in \cite{LeeTam2017}.

\begin{lma}\label{l-extension-2}
{\rm (Extension Lemma)} For all $\b_0\ge 1$ and $v>0$, there exist $a(n,v,\b_0)\geq 1,T(n,v,\b_0)>0$,  $\lambda(n,v,\beta_0)>0$ and   $\mu(n,v,\beta_0)>0$,  such that  the following is true:

 Suppose $(M^n,g_0)$ is a \K manifold with complex dimension $n$ and $p\in M$ so that $  B_{g_0}(p,R+1)\Subset M$ for some $R>0$ and for all $x\in B_{g_0}(p,R)$,
\begin{enumerate}
\item $\Ric(g_0)(x)\geq -\b_0$;
\item $\OB(g_0)(x)\geq -\b_0$;
\item $V_{g_0}(x,1)\geq v$.
\end{enumerate}
Suppose $g(t)$ is a   smooth \KR flow on $B_{g_0}(p,R)\times[0,t_0]$ with $0<t_0<T$ so that
\begin{enumerate}
\item[(i)] $|\Rm(g(t))|\leq at^{-1}$;
\item[(ii)] $\mathrm{inj}_{g(t)}(x)\geq \sqrt{a^{-1}t}$
\end{enumerate}
on $B_{g_0}(p,R)\times[0,t_0]$. Then $g(t)$ can be extended to a smooth solution to the \KR flow on $B_{g_0}(p, R-5\lambda t_0^\frac12)\times [0,(1+\mu)^2 t_0)]$ so that (i) and (ii) are still true, provided that $R-5\lambda t_0^\frac12>0$.

\end{lma}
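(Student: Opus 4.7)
The strategy is to restart the flow clock at the intermediate time $t_0/2$ (not $t_0$) and apply Proposition~\ref{pseudo-OB} to the restarted flow. The conversion $s=t-t_0/2\ge t_0/2\ge t/(2(1+\mu)^2)$ for $t\in[t_0,(1+\mu)^2 t_0]$ turns the bound $|\Rm(g(t_0/2+s))|\le L/s$ from Proposition~\ref{pseudo-OB} into $|\Rm(g(t))|\le 2L(1+\mu)^2/t$; choosing $a\ge 2L(1+\mu)^2$ (and similarly for the injectivity radius) recovers the hypotheses (i)--(ii) with the same constant $a$.

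\textbf{Main steps.} First, apply Proposition~\ref{OB-esti} with $r=\beta_0^{-1/2}$: the flow hypotheses $|\Rm|\le a/t$, $\mathrm{inj}\ge\sqrt{a^{-1}t}$ together with $\OB(g_0),\Ric(g_0)\ge-\beta_0$ yield
\[
\OB(g(t)),\ \Ric(g(t))\ge -C_0(n,a)\beta_0\quad\text{on }B_{g_0}(p,R-\lambda_1\sqrt{t_0}),\ t\in[0,t_0],
\]
provided $t_0\le T_2(n,\beta_0,a)$. Second, use this Ricci lower bound together with the shrinking-ball Lemma~\ref{l-balls} and standard volume comparison to transfer the initial non-collapsing $V_{g_0}(x,1)\ge v$ to time $t_0/2$:
\[
V_{g(t_0/2)}(x,r_0)\ge v'\,r_0^{2n}\quad\text{on }B_{g_0}(p,R-\lambda_2\sqrt{t_0}),
\]
for some $r_0,v'=v'(n,v,\beta_0,a)$. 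Third, view $g(t_0/2)$ as new initial data: the local existence construction of Section~\ref{s-existence} yields a \KR flow extension on $B_{g_0}(p,R-\lambda_3\sqrt{t_0})\times[t_0/2,t_0/2+\tau]$ for some $\tau=\tau(n,v,\beta_0,a)>0$, agreeing with $g$ on the overlap $[t_0/2,t_0]$ by uniqueness of bounded-curvature \KR flow on compact subsets. Since the restarted flow satisfies hypotheses (a)--(c) of Proposition~\ref{pseudo-OB} with uniform constants, we obtain $|\Rm|\le L/s$ and $\mathrm{inj}\ge\sqrt{L^{-1}s}$ on a ball of comparable radius for $s\in[0,r_0^2T_3]$. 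Choose $\mu=\mu(n,v,\beta_0)$ so that $\bigl((1+\mu)^2-\tfrac12\bigr)t_0\le r_0^2T_3$ whenever $t_0\le T$, and apply the conversion of the first paragraph to recover (i)--(ii) on $[0,(1+\mu)^2 t_0]$. The three radius losses $\lambda_1,\lambda_2,\lambda_3$ together with a small buffer accumulate to the prescribed $5\lambda\sqrt{t_0}$ for an appropriate $\lambda=\lambda(n,v,\beta_0)$.

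\textbf{Main difficulty.} The choice of $a$ is circular: $C_0=C_0(n,a)$ in Proposition~\ref{OB-esti} grows with $a$, so the Ricci lower bound at time $t_0/2$ degrades with $a$; this shrinks the intermediate non-collapsing constant $v'$ and in turn inflates $L=L(n,v')$ in Proposition~\ref{pseudo-OB}. Closing the self-consistent inequality $a\ge 2L(1+\mu)^2$ therefore requires carefully tracking how $L$ depends on $a$ through $v'$ and selecting a sufficiently large $a=a(n,v,\beta_0)$ to absorb this growth. This constant-chasing, analogous to the arguments in \cite{SimonTopping2017,McLeodTopping2019}, is the principal technical balancing act of the proof.
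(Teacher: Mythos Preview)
Your proposal has a genuine gap, and the ``main difficulty'' you flag is precisely the one the paper's argument is designed to sidestep rather than confront.

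\textbf{Where your argument breaks.} In your Step~3 you extend the flow past $t_0$ by some ``local existence construction'' and then assert that the restarted flow on $[t_0/2,\, t_0/2+\tau]$ satisfies hypotheses (a)--(c) of Proposition~\ref{pseudo-OB}. But (b) and (c) demand lower bounds on $\Ric$ and $\OB$ \emph{along the extended flow}. The only extension tool available here (Lemma~\ref{l-extension-1}, the Chern--Ricci construction) yields a flow that is merely uniformly equivalent to $g(t_0)$; it gives no Ricci or OB lower bound on $(t_0,\,t_0/2+\tau]$. So you cannot feed the extension back into Proposition~\ref{pseudo-OB} to harvest the $L/s$ curvature bound. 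This is not a detail: it is the whole mechanism by which you hoped to control $|\Rm|$ beyond $t_0$.

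\textbf{How the paper avoids the circularity.} The paper never restarts the clock. It applies Proposition~\ref{pseudo-OB} to the \emph{original} flow on $[0,t_0]$, at a scale $r=\tfrac12\lambda t_0^{1/2}C_1^{-1/2}$ chosen so that the just--established bounds $\OB,\Ric\ge -C_1(\lambda^2t_0)^{-1}$ become exactly the scale--invariant hypotheses $\OB,\Ric\ge -r^{-2}$. The volume hypothesis (a) of Proposition~\ref{pseudo-OB} is checked at $t=0$, where $V_{g_0}(x,r)\ge c_1 r^{2n}$ with $c_1=c_1(n,v,\beta_0)$ coming from assumption~(3) and Bishop--Gromov; in particular $c_1$ is \emph{independent of $a$}. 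Hence the output constant $c_2=L(n,c_1)$ of Proposition~\ref{pseudo-OB} depends only on $(n,v,\beta_0)$, \emph{not} on $a$. With this improved bound $|\Rm(g(t_0))|\le c_2/t_0$, $\mathrm{inj}\ge\sqrt{c_2^{-1}t_0}$, $|\nabla\Rm|\le c_2^{3/2}t_0^{-3/2}$ in hand, the paper extends via Lemma~\ref{l-extension-1} (Chern--Ricci) and then controls the curvature of the extension by the Lott--Zhang estimate Lemma~\ref{l-curv1}, which needs only the metric equivalence $\alpha_n g(t_0)\le h(s)\le\alpha_n^{-1}g(t_0)$ together with the $|\Rm|$, $|\nabla\Rm|$ bounds on $g(t_0)$, \emph{not} any OB/Ric lower bound on the extension. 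The resulting curvature constant is $c_2A(1+\mu)^2$ with $A=A(n)$, and the self--consistency condition $a\ge c_2A(1+\mu)^2$ is now genuinely non--circular because $c_2$ does not see $a$.

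In short: your plan transfers non-collapsing forward in time (picking up $a$--dependence) and then tries to re-invoke Proposition~\ref{pseudo-OB} on an extension where its hypotheses are not known; the paper instead exploits that Proposition~\ref{pseudo-OB} already applies on $[0,t_0]$ with the $t=0$ volume, gets an $a$--free constant, and bounds the extension by the purely metric-comparison estimate of Lemma~\ref{l-curv1}. The constants can then be chosen in the order $\mu\to a\to\lambda\to T$ with no feedback loop.
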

\begin{proof}  Let $a(n,v,\beta_0)\ge 1, T(n,v,\beta_0)>0$, $\mu(n,v,\beta_0)>0$, $\lambda(n,v,\beta_0)>0$ to be determined.
In the following,

\begin{itemize}
  \item $c_i$ will denote positive constants depending only on $n$,  the lower bound of $v$ and upper bound of $\beta_0$; and
  \item $C_i$ will denote positive constants depending only on $n, a$.
\end{itemize}

By volume comparison, we have
\be\label{e-vol}
V_{g_0}(x,r)\ge c_1r^{2n}
\ee
for all $0<r\le 1$. Suppose $t_0<T$ and $R-5\lambda t_0^\frac12>0$.

 For $x\in B_{g_0}(p, R-\lambda t_0^\frac12)$, $B_{g_0}(x, \lambda t_0^\frac12)\Subset B_{g_0}(p,R)$. By Proposition \ref{OB-esti}, there exists $T_1(a,n)>0$ and $C_1(n,a)>1$ such that if $\lambda$  and $T$ satisfy
\be\label{e-condition-1}
(\lambda T^\frac12)^{-1}\ge \beta_0^\frac12
\ee
which implies $\mathrm{OB}(g_0)\ge -\beta_0\ge- (\lambda T^\frac12)^{-2}, \Ric(g_0) \ge\beta_0\ge- (\lambda T^\frac12)^{-2}$ in $B_{g_0}(x, \lambda t_0^\frac12)$, we have
\be\label{e-1st-1}
 \mathrm{OB}(g(x,t))\ge -C_1(\lambda^2 t_0)^{-1}; \Ric(g(x,t))\ge -C_1(\lambda^2 t_0)^{-1}
\ee
for all $t\le t_0\wedge (\lambda^2t_0 T_1)=t_0$, and $x\in B_{g_0}(p, R-2\lambda t_0^\frac12) $ provided
\be\label{e-condition-2}
\lambda^2 T_1\ge 1.
\ee
Also, by Lemma \ref{l-balls}, we have
\be\label{e-1st-2}
B_{g(t)}(x,\frac 12\lambda t_0^\frac12)\subset B_{g_0}(x,\lambda t_0^\frac12)
\ee
for $t\le t_0$, provided that
\be\label{e-condition-3}
\beta a^\frac12 \le \frac12\lambda.
\ee
where $\beta$ is a positive constant depending only on $n$. For $x\in B_{g_0}(p, R-3\lambda t_0^\frac12)$, $t\in [0,t_0]$,
$$
B_{g(t)}(x, \frac12 \lambda t_0^\frac12 C_1^{-\frac12})\subset B_{g(t)}(x,\frac 12\lambda t_0^\frac12)\subset B_{g_0}(x,\lambda t_0^\frac12)
$$
because $C_1>1$. Moreover,   $r=:  \frac12 \lambda t_0^\frac12 C_1^{-\frac12}\le1$ if
\be\label{e-condition-4}
\frac12\lambda T^\frac12\le 1.
\ee
By Proposition \ref{pseudo-OB}  and by \eqref{e-1st-1}, \eqref{e-vol}, there exist  $T_2(n,v,\beta_0)>0$ and $c_2(n,v,\b_0)$ such that
\begin{equation}\label{new-esti}
\left\{
\begin{array}{ll}
&|\Rm(x,t)|\leq c_2t^{-1};\\
&|\nabla \Rm(x,t)|\leq c_2^\frac32t^{-3/2};\\
&\mathrm{inj}_{g(t)}(x)\geq \sqrt{c_2^{-1}t}.
\end{array}\right.
\end{equation}
for all $ x\in B_{g_0}(p, R-3\lambda t_0^\frac12)$ and $t\le t_0\wedge r^2T_2=t_0$ provided
\bee
t_0\le r^2T_2=\frac14 \lambda^2 t_0  C_1^{-1}T_2,
\eee
or
\be\label{e-condition-5}
\lambda^2 T_2\ge 4C_1.
\ee
Let $U=B_{g_0}(p, R-3\lambda t_0^\frac12)$ and let $\rho=\sqrt{c_2^{-1}t_0}$. By Lemma \ref{l-extension-1}, we can find a solution to the \KR flow $h(s)$ defined on $U_\rho\times[0, \a c_2^{-1}t_0]$, where $\a=\a(n)$ is a positive constant depending only on $n$, and
$$
U_\rho=\{x:\ B_{g(t_0)}(x,\rho)\Subset B_{g_0}(p, R-3\lambda t_0^\frac12)\}
$$
with $h(0)=g(t_0)$ and
\be\label{e-h}
\a h(0)\le h(s)\le \a^{-1}h(0)
\ee
on $U_\rho$.
We claim
\be\label{e-U}
U_\rho\supset B_{g_0}(p,R-4\lambda t_0^\frac12).
\ee
In fact for $x\in B_{g_0}(p,R-4\lambda t_0^\frac12)$,
$$
B_{g(t_0)}(x,\rho)= B_{g(t_0)}(x,\sqrt{c_2^{-1}t_0})\Subset B_{g(t_0)}(x,\frac12\lambda t_0^\frac12)
$$
provided
\be\label{e-condition-9}
c_2^{-\frac12}< \frac12\lambda.
\ee
By \eqref{e-1st-2}, we conclude that the claim is true.

  By Lemma  \ref{l-curv1}, by \eqref{e-U}, \eqref{e-1st-2} and the definition of $U_\rho$, there exists $A=A(n)$ such that for $x\in B_{g_0}(p,R-5\lambda t_0^\frac12)$,
$$
|\Rm(h(x,s))|\le A\rho^{-2}=\frac{c_2A}{t_0}\le \frac{c_2A(1+\mu)^2}{(1+\mu)^2 t_0}\le \frac{c_2A(1+\mu)}{t}
$$
for all $t\le (1+\mu)^2t_0$, provided
\be\label{e-condition-8}
\mu\le \a c_2^{-1}.
\ee
We will choose $\mu\le 1$.  Hence we can extend $g(t)$ on $B_{g_0}(p,R-5\lambda t_0^\frac12)$ to a \KR flow defined on $[0,(1+\mu)t_0]$ if we define $g(t)=h(s+t_0)$ for $t\ge t_0$. Moreover the curvature of $g(t)$ satisfies
\bee
|\Rm(g(x,t))|\le \frac at
\eee
on $B_{g_0}(p,R-5\lambda t_0^\frac12)\times[0,(1+\mu)^2t_0]$
provided that,
\be\label{e-condition-6}
c_2A(1+\mu)^2\le a
\ee
By \eqref{new-esti} and \eqref{e-h}, we conclude that
the injectivity radius of $h(s)$ at $x\in B_{g_0}(p,R-5\lambda t_0^\frac12)$,
\bee
\mathrm{inj}_{h(s)}(x)\ge \sqrt{c_3^{-1}t_0}
\eee
for some constant $c_3$ depending only on $c_2, \a, n$ which implies that $c_3$ depends only on $n, v,\beta_0$ by \cite{CheegerGromovTaylor1982}. Hence
$
\mathrm{inj}_{g(t)}(x)\ge \sqrt{a^{-1}t}
$
for $t\ge t_0$ provided,
\be\label{e-condition-7}
a\ge c_3.
\ee
Hence the lemma is true, provided $a, T, \mu, \lambda$ can be chosen so that conditions \eqref{e-condition-1}, \eqref{e-condition-2}, \eqref{e-condition-3},
\eqref{e-condition-4}, \eqref{e-condition-5}, \eqref{e-condition-8},  \eqref{e-condition-6}, \eqref{e-condition-7} are satisfied. Let us list the conditions below:

\bee
\left\{
  \begin{array}{ll}
\mu\le \a_n c_2^{-1}, \mu\le 1, \ c_2=c_2(n,v,\beta_0); \\
a\ge c_3; \ c_3=c_3(n,v,\beta_0); \\
c_2A(1+\mu)^2\le a,\ c_2=c_2(n,v,\beta_0), A=A(n); \\
\lambda^2 T_1\ge 1,   T_1=T_1(n,a) ; \\
\beta a^\frac12 \le \frac12\lambda,\ \beta=\beta(n);\\
\lambda^2 T_2\ge 4C_1;\ T_2=T_2(n,v,\beta_0), \ C_1=C_1(n,a); \\
    (\lambda T^\frac12)^{-1}\ge \beta_0^\frac12; \\
    \frac12\lambda T^\frac12\le 1;\\
  \end{array}
\right.
\eee
 Choose $\mu(n,v,\beta_0)>0$ small enough so that the first inequality is true.
Now choose $a=a(n,v,\beta_0)>1$ large enough so that $a\ge c_3$ and $a\ge 4c_2A$, then the second and the third  inequalities are true.    \eqref{e-condition-8} and \eqref{e-condition-6} are satisfied. Then one can choose $\lambda(n,v,\beta_0)>0$ large enough so that the fourth, fifth and sixth inequalities are true. Finally, Choose $T(n,v,\beta_0)>0$ so that the last two inequalities are true. This completes the proof of the lemma.

\end{proof}

\begin{lma}\label{l-extension-3} For all $\b_0\ge 1$ and $v>0$, there exist $T(n,v,\b_0,B)>0$ and $L(n,v,\b_0)>0$  such that  the following is true:

 Suppose $(M^n,g_0)$ is a \K manifold with complex dimension $n$ and $p\in M$ so that $B_{g_0}(p,R)\Subset M$ for some $R>1$ and for all $x\in B_{g_0}(p,R)$,
\begin{enumerate}
\item $\Ric(g_0)(x)\geq -\b_0$;
\item $\OB(g_0)(x)\geq -\b_0$;
\item $V_{g_0}(x,r)\geq vr^{2n}$, for $r\le 1$ and $B_{g_0}(x,r)\subset B_{g_0}(p,R)$.
\end{enumerate}
Suppose $g(t)$ is a   smooth \KR flow on $B_{g_0}(p,R)\times[0,S]$
\begin{enumerate}
\item[(i)] $|\Rm(g(t))|\leq Bt^{-1}$;
\item[(ii)] $\mathrm{inj}_{g(t)}(x)\geq \sqrt{B^{-1}t}$
\end{enumerate}
Then we also have
\begin{enumerate}
\item[(i)] $|\Rm(g(t))|\leq Lt^{-1}$;
\item[(ii)] $\mathrm{inj}_{g(t)}(x)\geq \sqrt{L^{-1}t}$
\end{enumerate}
in $B_{g_0}(p,R-\frac{1}{2})$ and $0\le t\le S\wedge T$.
 \end{lma}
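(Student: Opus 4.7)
The strategy is a two-step bootstrap: first invoke Proposition \ref{OB-esti} to propagate the lower bounds on $\OB$ and $\Ric$ from $g_0$ into the flow (producing bounds that unavoidably depend on $B$), and then invoke Proposition \ref{pseudo-OB} to convert these propagated lower bounds into an upper bound on $|\Rm|$ and a lower bound on $\mathrm{inj}$ whose constants are \emph{independent} of $B$, at the cost of shrinking the time interval.

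\smallskip

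\emph{Step 1 (improving $\OB$ and $\Ric$).} Since $\beta_0\ge 1$, for every $y\in B_{g_0}(p,R-1)$ we have $B_{g_0}(y,\beta_0^{-1/2})\subset B_{g_0}(p,R)$, so the hypotheses $\OB(g_0)\ge -\beta_0$ and $\Ric(g_0)\ge -\beta_0$ hold on $B_{g_0}(y,\beta_0^{-1/2})$. Together with the assumed $|\Rm(g(t))|\le Bt^{-1}$ and $\mathrm{inj}_{g(t)}\ge\sqrt{B^{-1}t}$, Proposition \ref{OB-esti} (applied with $k=\beta_0$, $r=1$, and with its parameter $L$ replaced by $B$) produces constants $C_0(n,B)$ and $T_2(n,B)>0$ such that, on $B_{g_0}(p,R-1)\times[0,T_2(n,B)\beta_0^{-1}\wedge S]$,
\begin{equation*}
\OB(g(t))\ge -\beta_1,\qquad \Ric(g(t))\ge -\beta_1,\qquad \beta_1:=C_0(n,B)\,\beta_0.
\end{equation*}

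\emph{Step 2 (upgrading to the $B$-free bound).} Put $r_1:=\beta_1^{-1/2}$; we may enlarge $C_0$ so that $\beta_1\ge 16$ and thus $r_1\le 1/4$. Fix $x_0\in B_{g_0}(p,R-\tfrac12)$. The volume hypothesis $V_{g_0}(x_0,r_1)\ge v\,r_1^{2n}$ of Proposition \ref{pseudo-OB} holds since $r_1\le 1$. For its curvature hypotheses we need the bounds of Step 1 on the time-$t$ ball $B_t(x_0,r_1)$; by the shrinking/expanding ball estimate (Lemma \ref{l-balls}) combined with $|\Rm(g(t))|\le B/t$, there is a dimensional constant $\beta_n$ such that
\begin{equation*}
B_t(x_0,r_1)\subset B_{g_0}(x_0,\,r_1+\beta_n\sqrt{Bt})\subset B_{g_0}(p,R-1),
\end{equation*}
provided $r_1+\beta_n\sqrt{Bt}\le \tfrac12$, which we impose by taking $T$ small enough in a manner depending on $n,v,\beta_0,B$. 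Proposition \ref{pseudo-OB} now delivers constants $L(n,v)$ and $T_3(n,v)>0$ such that for $t\le r_1^2 T_3=\beta_1^{-1}T_3$,
\begin{equation*}
|\Rm(g(x_0,t))|\le L/t,\qquad \mathrm{inj}_{g(t)}(x_0)\ge\sqrt{L^{-1}t}.
\end{equation*}

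\emph{Choice of constants and main obstacle.} Taking $T(n,v,\beta_0,B)$ to be the minimum of $T_2(n,B)\beta_0^{-1}$, $\beta_1^{-1}T_3(n,v)$, and the distance constraint from Step 2 yields the stated lifespan, while the constant $L$ coming from Proposition \ref{pseudo-OB} depends only on $n,v$ and may thus be viewed as $L(n,v,\beta_0)$. The subtle point --- and the reason the lemma is useful --- is precisely that the bound $L/t$ is parabolically scale-invariant, so even though we are forced to choose the microscopic radius $r_1=\beta_1^{-1/2}$ (which absorbs all of the $B$-dependence from Step 1), the resulting curvature and injectivity constants $L$ do \emph{not} see $r_1$. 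Verifying the inclusion $B_t(x_0,r_1)\subset B_{g_0}(p,R-1)$ throughout the relevant time interval is the only mildly delicate bookkeeping, and it is handled straightforwardly by further shrinking $T$.
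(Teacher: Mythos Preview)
Your approach is exactly that of the paper: invoke Proposition~\ref{OB-esti} to propagate the $\OB$ and $\Ric$ lower bounds into the flow, then feed those into Proposition~\ref{pseudo-OB} to extract a $B$-independent curvature/injectivity constant. The scale-invariance observation is the correct reason this works.

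There is, however, a bookkeeping slip in the radius accounting. In Step~1 you place the propagated bounds on $B_{g_0}(p,R-1)$; in Step~2 you fix $x_0\in B_{g_0}(p,R-\tfrac12)$ and claim
\[
B_{g_0}\bigl(x_0,\,r_1+\beta_n\sqrt{Bt}\bigr)\subset B_{g_0}(p,R-1).
\]
By the triangle inequality this would force $r_1+\beta_n\sqrt{Bt}\le -\tfrac12$, which is impossible; in particular it already fails at $t=0$, so ``further shrinking $T$'' does not repair it. The issue is that you gave away a full unit of radius in Step~1 when Proposition~\ref{OB-esti} only needs a ball of radius $\beta_0^{-1/2}$.

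The fix is exactly what the paper does: first observe that one may assume $\beta_0$ is large (the paper takes $\beta_0>100$), since replacing $\beta_0$ by $\max\{\beta_0,100\}$ only weakens the hypotheses and the constants depend on $\beta_0$ anyway. Then in Step~1 work on $B_{g_0}(p,R-2\beta_0^{-1/2})$ rather than $B_{g_0}(p,R-1)$. The total radius lost after both steps is at most $(2+C_0^{-1/2})\beta_0^{-1/2}<\tfrac12$, which gives the conclusion on $B_{g_0}(p,R-\tfrac12)$ as stated. With this correction your argument matches the paper's proof line for line.
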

\begin{proof} We may assume that $\beta_0>100$.   By Proposition \ref{OB-esti}, there exist $C_1=C_1(n,B)>1$, $T_1=T_1(n,B)>0$ with $r=(\sqrt{\b_0})^{-1}$ such that
$$
\OB(  g(x,t))\geq -    C_1\beta_0; \Ric(g(x,t))\ge -C_1\beta_0
$$
for all $x\in B_{g_0}(p,R-2\beta_0^{-\frac12})$ for all $0\le t\le S\wedge \beta_0^{-1}T_1$. Here we have used the fact that $r<\frac13$.

   By  Proposition \ref{pseudo-OB} and Lemma \ref{l-balls}, there exist $L(n,v)>0$, $T_2(n,v,B)>0$ for all $x\in B_{g_0}(p,R-(2+C_1^{-\frac12})\beta_0^{-\frac12})$ so that $B_{g_0}(x,  (C_1\beta_0)^{-\frac12})\Subset M$, we have $B_{g(t)}(x, \frac{1}{2}(C_1\beta_0)^{-\frac12})$ for $t\in S\wedge (C_1\b_0)^{-1}T_2$ and hence
 $$
 |\Rm(g(x,t))|\le \frac{L}t; \mathrm{inj}_{g(t)}(x)\ge \sqrt{L^{-1}t}
 $$
 for all $0\le t\le S\wedge \beta_0^{-1}T_1\wedge (C_1\beta_0)^{-1}T_2$. Since $\beta_0>100$, $C_1>1$, we have $(2+C_1^{-\frac12})\beta_0^{-\frac12}<\frac{1}{2}$. From this it is easy the lemma is true.

\end{proof}
\begin{rem}\label{r-a} Since the constant $L(n,v)$ in Lemma \ref{l-extension-3} depends only on $v$, $n$, by the proof of Lemma  \ref{l-extension-2}, we may assume that $a$ in the Lemma \ref{l-extension-2} also satisfies $a\ge L$.
\end{rem}

{ We are ready to prove Theorem \ref{t-intro-local-ext-KRF}}.

\begin{proof}[Proof of Theorem \ref{t-intro-local-ext-KRF}] Let $a, T, \lambda, \mu$ as in Lemma \ref{l-extension-2}. By Lemma \ref{l-extension-3} and Remark \ref{r-a}, there exists $T_1(n,v,\beta_0,B)>0$ such that
\be\label{e-extension-1}
\left\{
  \begin{array}{ll}
     |\Rm(g(t))|\leq at^{-1}; \\
    \mathrm{inj}_{g(t)}(x)\geq \sqrt{a^{-1}t}
  \end{array}
\right.
\ee
in $B_{g_0}(p, R+\frac12)\times[0, S\wedge T_1]\supset B_{g_0}(p, R+\frac12)\times[0, S\wedge T_0]$, where  $T_0=T_1\wedge T$. Suppose $S\ge T_0$,  then the theorem is obviously true. Suppose $S<T_0$. Let
$$
R_0=R+\frac12, t_0=S
$$
and let
$$
t_{k+1}=(1+\mu)^2t_{k}, R_{k+1}=R_{k}-5\lambda t_{k}^\frac12
$$
for $k\ge 0$. For $k=0$, then  $R_0=R+\frac12>R$ and $t_0=S<T_0$. Since $t_k\uparrow\infty$, there is  $k_0\ge1$ such that $t_k< T_0$ and $R_k>R$ for all $k<k_0$. Moreover, $t_{k_0}\ge T_0$ or $R_{k_0}\le R$. Clearly, $k_0\ge 1$. By Lemma \ref{l-extension-2}, we conclude that $g(t)$ can be extended to  $B_{g_0}(p, R_{k_0-1})\times[0, t_{k_0-1}]$ satisfying the conditions (1) and (2).

Suppose $R_{k_0}\le R$, then
\bee
\begin{split}
\frac12\le &\sum_{k=0}^{k_0-1}5\lambda t_k^\frac12\\
=&5\lambda t_{k_0}^\frac12\sum_{i=1}^{k_0}(1+\mu)^{-i}\\
\le &5\lambda\mu^{-1} t_{k_0}^\frac12.
\end{split}
\eee
So
\bee
t_{k_0}\ge \left(\frac1{10}\mu\lambda^{-1}\right)^2.
\eee
and
\bee
t_{k_0-1}= (1+\mu)^{-2}t_{k_0}\ge (1+\mu)^{-2}(\frac1{10}\mu\lambda^{-1})^2\ge T_0
\eee
if $T_0$ is adjusted so that $T_0\le  (1+\mu)^{-2}(\frac1{10}\mu\lambda^{-1})^2$. Since $R_{k_0-1}>R$, we conclude that $g(t)$ can be extended to $[0,T_0]$ on $B_{g_0}(p,R)$ satisfying (1) and (2).

  If $R_{k_0}>R$ and $t_{k_0}\ge T_0$.  Then $R_{k_0-1}>R$ and $t_{k_0-1}< T_0$ by the definition of $k_0$. By Lemma \ref{l-extension-2}, $g(t)$ can be extended to $[0,t_{k_0}]\supset [0,T_0]$ on $B_{g_0}(p, R_{k_0})\supset B_{g_0}(p,R)$ satisfying (1) and (2). This completes the proof of the theorem.

\end{proof}

\subsection{Existence of \KR flow on $M\times[0,T]$}

The following is a consequence of Theorem \ref{t-intro-local-ext-KRF}:

\begin{lma}\label{l-local}
For any $\beta_0\ge 1$, $v>0$, there exist $a(n,v,\beta_0)\ge 1$, $T(n,v,\beta_0)>0$, and $C(n,v,\beta_0)\ge0$ such that the following is true:

Suppose $(M^n,g_0)$ is a \K manifold with complex dimension $n$. Let $p\in M$ so that $B_{g_0}(p,R+4)\Subset M$ for some $R> 1$ and for all $x\in B_{g_0}(p,R+3)$,
\begin{enumerate}
\item [(i)]$\Ric(g_0)\geq -\b_0$;
\item [(ii)]$\OB(g_0)\geq -\b_0$;
\item [(iii)] $V_{g_0}(x,1)\geq v$.
\end{enumerate}
Then  there is a smooth \KR flow $g(t)$ solution on $B_{g_0}(p,R)\times [0,T]$ such that
\begin{enumerate}
\item $|\Rm(g(t))|\leq at^{-1}$;
\item $\mathrm{inj}_{g(t)}(x)\geq \sqrt{a^{-1}t}$; and
\item $\OB(g(x,t)\geq -C(n,v,\beta_0)$; $\Ric(g(x,t)\geq -C(n,v,\beta_0)$.
\end{enumerate}
 \begin{proof}  Let $a(n,v,\b_0)$ be the constant obtained from Theorem \ref{t-intro-local-ext-KRF}.
On $B_{g_0}(p,R+4)$, choose $1>>\rho>0$ small enough so that for all $x\in B_{g_0}(p,R+3)$,
\begin{equation}
\left\{
\begin{array}{ll}
& B_{g_0}(x,\rho)\Subset M;\\
&|\Rm (g_0)|\leq \rho^{-2} ;\\
&\mathrm{inj}_{g_0}(x) \geq \rho.
\end{array}
\right.
\end{equation}
Then we may apple Lemma \ref{l-extension-1} with $N=M$, $U=B_{g_0}(p,R+3)$ to get a solution to the \KR flow $\wt  g(t)$ with $ \wt g(0)=g_0$ defined on $B_{g_0}(p,R+2)\times [0,\a_n\rho^2]$. By smoothness of $ \wt  g(t)$, we may choose  $\rho$ small enough so that
so that for all $(x,t)\in B_{g_0}(p,R+2)\times [0,\a_n\rho^2]$,
\begin{equation}
\left\{
\begin{array}{ll}
&|\Rm(\wt g(t))|\leq at^{-1};\\
&\mathrm{inj}_{\wt g(t)}(x)\geq \sqrt{a^{-1}t}.
\end{array}\right.
\end{equation}

 By   Theorem \ref{t-intro-local-ext-KRF}, $\wt g(t)$ can be extended to a solution to the \KR flow on $B_{g_0}(p, R+1)\times[0, T]$ for some $T(n,v,\beta_0)>0$ so that $g(t)$ satisfies (1) and (2) in the lemma. Property (3) in the lemma follows from Proposition \ref{OB-esti} by choosing a possible smaller $T>0$ which depends only on $n, v,\beta_0$, since  $a$ depends only on $n ,v,\beta_0$.

 \end{proof}
\end{lma}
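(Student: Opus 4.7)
The plan is to combine three results already at our disposal in the paper: a short-time local existence for the \KR flow on a precompact open set (Lemma \ref{l-extension-1}), the quantitative extension result Theorem \ref{t-intro-local-ext-KRF}, and the persistence statement Proposition \ref{OB-esti}. The strategy is to use Lemma \ref{l-extension-1} merely to step off from $t=0$ via the smoothness of $g_0$, and then hand the flow to Theorem \ref{t-intro-local-ext-KRF}, whose output constants do not depend on how short this priming interval is.

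Concretely, since $g_0$ is smooth and $B_{g_0}(p,R+4)\Subset M$, one picks $\rho=\rho(g_0)>0$ so that on $B_{g_0}(p,R+3)$ one has $|\Rm(g_0)|\le \rho^{-2}$, $\mathrm{inj}_{g_0}\ge \rho$, and $B_{g_0}(x,\rho)\Subset M$. Lemma \ref{l-extension-1} applied with $U=B_{g_0}(p,R+3)$ then yields a smooth \KR flow $\tilde g(t)$ on $B_{g_0}(p,R+2)\times[0,\alpha_n\rho^2]$ with $\tilde g(0)=g_0$. Shrinking $\rho$ once more if needed (which only affects the length of this priming window), the smoothness of $\tilde g(t)$ guarantees, for $a:=a(n,v,\beta_0)$ supplied by Theorem \ref{t-intro-local-ext-KRF}, the bounds $|\Rm(\tilde g(t))|\le at^{-1}$ and $\mathrm{inj}_{\tilde g(t)}(x)\ge \sqrt{a^{-1}t}$ on $B_{g_0}(p,R+2)\times(0,\alpha_n\rho^2]$. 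Now Theorem \ref{t-intro-local-ext-KRF} (with $B=a$) extends $\tilde g(t)$ to a \KR flow $g(t)$ on $B_{g_0}(p,R+1)\times[0,T_0]$ obeying the same two bounds, with $T_0=T_0(n,v,\beta_0)$ independent of the priming scale. Restricting to $B_{g_0}(p,R)\subset B_{g_0}(p,R+1)$ delivers items (1) and (2) of the lemma.

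For item (3), apply Proposition \ref{OB-esti} centered at each $x\in B_{g_0}(p,R)$ with $r=\beta_0^{-1/2}$ and $k=1$, so that $B_{g_0}(x,\beta_0^{-1/2})\subset B_{g_0}(p,R+1)$ and the hypotheses $\OB(g_0)\ge -\beta_0=-kr^{-2}$ and $\Ric(g_0)\ge -kr^{-2}$ hold on that ball by (i) and (ii). The curvature and injectivity bounds from the previous paragraph provide the assumptions $|\Rm(g(t))|\le Lt^{-1}$ and $\mathrm{inj}_{g(t)}\ge \sqrt{L^{-1}t}$ with $L=a$. The proposition then yields $\OB(g(x,t))\ge -C_0(n,a)\beta_0$ and $\Ric(g(x,t))\ge -C_0(n,a)\beta_0$ for $t\in[0,T_0\wedge \beta_0^{-1}T_2(n,a)]$. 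Setting $T:=T_0\wedge \beta_0^{-1}T_2$ and $C:=C_0(n,a)\beta_0$, which depend only on $n,v,\beta_0$, completes the proof.

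The main subtlety is to verify that the final constants $a, T, C$ depend only on $n,v,\beta_0$ and not on the auxiliary smoothness scale $\rho$. This is precisely the design virtue of Theorem \ref{t-intro-local-ext-KRF}: its output is insensitive to the initial existence time $S$, so the priming step is absorbed cleanly and the quantitative constants come out clean. Beyond this, the argument is essentially bookkeeping of the nested radii $R+4\supset R+3\supset R+2\supset R+1\supset R$, each buffer providing the room for the next application of the three results.
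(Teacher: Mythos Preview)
Your proposal is correct and follows essentially the same route as the paper's own proof: prime the flow via Lemma \ref{l-extension-1} on $B_{g_0}(p,R+2)$, feed this short-time solution into Theorem \ref{t-intro-local-ext-KRF} to reach a uniform time $T_0(n,v,\beta_0)$ on $B_{g_0}(p,R+1)$, and then invoke Proposition \ref{OB-esti} to obtain item (3) after possibly shrinking $T$. Your explicit choice of $r=\beta_0^{-1/2}$, $k=1$ in the application of Proposition \ref{OB-esti} and your remark that the priming scale $\rho$ drops out because Theorem \ref{t-intro-local-ext-KRF} is insensitive to $S$ are exactly the points the paper leaves implicit.
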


Now Theorem \ref{t-intro-pyramid} (\textbf{II}) follows from Lemma \ref{l-local} using exhaustion argument. We restate it for reader's convenience.
\begin{thm}\label{KRF-exist}
For any $n\in \mathbb{N},v>0$, there exist $T(n,v), a(n,v), L(n,v)>0$ such that the following holds. Suppose $(M,g_0)$ is a complete noncompact \K manifold with
\begin{enumerate}
\item[(a)] $\Ric(g_0)\geq -1$ on $M$;
\item[(b)] $\OB(g_0)\geq -1$ on $M$;
\item[(c)] $V_{g_0}(x,1)\geq v$ for all $x\in M$,
\end{enumerate}
then there is a complete solution $g(t)$ to the \KR flow starting from $g_0$ such that
\begin{equation}
\left\{
\begin{array}{ll}
\Ric(g(t))&\geq -L;\\
\OB(g(t))&\geq -L;\\
|\Rm(g(t))|&\leq at^{-1};\\
\mathrm{inj}_{g(t)}(x)&\geq \sqrt{a^{-1}t}
\end{array}
\right.
\end{equation}
on $M\times (0,T]$.
\end{thm}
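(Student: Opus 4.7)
The idea is to apply Lemma \ref{l-local} to an exhaustion of $M$ by geodesic balls and then pass to a limit via Hamilton's compactness theorem. Fix $p\in M$ and let $\beta_0=1$, so hypotheses (i)--(iii) of Lemma \ref{l-local} are satisfied on every ball $B_{g_0}(p,R+4)$ with the same constants $v$ and $\beta_0$. Let $a=a(n,v,1)\ge 1$, $T=T(n,v,1)>0$, $L=C(n,v,1)\ge 0$ be the constants produced by Lemma \ref{l-local}; crucially they are all independent of $R$.

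For each integer $k\ge 2$, apply Lemma \ref{l-local} on $B_{g_0}(p,k+4)$ to obtain a smooth \KR flow $g_k(t)$ on $B_{g_0}(p,k)\times[0,T]$ with $g_k(0)=g_0|_{B_{g_0}(p,k)}$ and satisfying
\bee
|\Rm(g_k(t))|\le a t^{-1},\qquad \mathrm{inj}_{g_k(t)}(x)\ge \sqrt{a^{-1}t},\qquad \mathrm{OB}(g_k(t))\ge -L,\qquad \Ric(g_k(t))\ge -L
\eee
on $B_{g_0}(p,k)\times(0,T]$. The curvature bound together with the injectivity radius bound (and Shi's higher-order derivative estimates, applied on slightly smaller balls) provide all the uniform smooth estimates required for Hamilton's Cheeger--Gromov compactness theorem on any compact subset of $M\times(0,T]$.

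The next step is to extract a subsequential limit. Fix any $\tau\in(0,T]$. On $B_{g_0}(p,k)$ the metrics $g_k(\tau)$ have bounded curvature and positive injectivity radius depending only on $n,v,\tau$. By a standard diagonal argument one extracts a subsequence (still denoted $g_k$) and a complete pointed limit flow $(M_\infty,g_\infty(t),p_\infty)$ on $(0,T]$, together with an exhaustion of $M_\infty$ by open sets $\Omega_k$ and diffeomorphisms $\Phi_k:\Omega_k\to U_k\subset M$ such that $\Phi_k^* g_k(t)\to g_\infty(t)$ smoothly and locally uniformly on $M_\infty\times(0,T]$; all four estimates pass to the limit. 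To identify $M_\infty$ with $M$ and check that $g_\infty(t)\to g_0$ as $t\downarrow 0$, one uses the shrinking/expanding ball Lemma \ref{l-balls} together with the Ricci lower bound $\Ric(g_k(t))\ge -L$ (valid uniformly), which gives a uniform bi-Lipschitz comparison $e^{-Lt}g_0\le g_k(t)\le e^{C(n,a)}g_0$-type control on metric balls of the initial metric; this lets one take the limit of the identifications $\Phi_k$ and identify $M_\infty=M$ with $g_\infty(0)=g_0$ in the $C^0$-sense. Completeness of $g_\infty(t)$ for $t>0$ follows from the curvature bound $|\Rm|\le at^{-1}$ combined with completeness of $g_0$ via Lemma \ref{l-balls}.

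The main obstacle I expect is the identification of the limit with $M$ itself and the verification that $g_\infty(t)\to g_0$ initially, since the base manifolds $B_{g_0}(p,k)$ vary with $k$ and one must show the diffeomorphisms $\Phi_k$ converge to a global diffeomorphism; this is a standard but careful application of the shrinking/expanding ball arguments used already in the proof of Lemma \ref{l-extension-2}, controlling distance distortion by the two-sided bounds on $\Ric$ that Lemma \ref{l-local}(3) supplies. Once this identification is made, all stated estimates are inherited from the $g_k(t)$ by smooth convergence, completing the proof.
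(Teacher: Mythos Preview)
Your overall strategy---apply Lemma \ref{l-local} on an exhaustion by balls and pass to a limit---is exactly what the paper does. The difference, and the place where your argument has a genuine gap, is in how the limit is extracted.

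Because all of the local flows $g_k(t)$ live on the \emph{same} manifold $M$ and all start from the \emph{same} initial metric $g_0$, there is no need to invoke Hamilton's Cheeger--Gromov compactness and then struggle to identify an abstract limit $(M_\infty,p_\infty)$ with $(M,p)$. The paper instead works directly in local coordinates on $M$ and applies Ascoli--Arzel\`a. The key input you are missing is Chen's local curvature estimate \cite[Corollary 3.2]{Chen2009} (see also \cite{Simon2008}): given that $g_k(0)=g_0$ has locally bounded curvature and $|\Rm(g_k(t))|\le a/t$, one obtains a bound $|\Rm(g_k(t))|\le C(\Omega_j,g_0,n,v)$ on $\Omega_{j-1}\times[0,T]$ that is uniform in $k$ \emph{and} extends down to $t=0$. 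Combined with the modified Shi estimates \cite[Theorem 14.16]{ChowBookII}, this yields uniform $C^\infty$ bounds on each $\Omega_{j-1}\times[0,T]$, so Ascoli--Arzel\`a plus a diagonal argument produces $g(t)$ on $M\times[0,T]$ with $g(0)=g_0$ automatically, and completeness follows from Lemma \ref{l-balls}.

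Your proposed workaround for the identification---a ``bi-Lipschitz comparison $e^{-Lt}g_0\le g_k(t)\le e^{C(n,a)}g_0$''---does not hold as stated. The Ricci lower bound $\Ric(g_k(t))\ge -L$ only gives the \emph{upper} bound $g_k(t)\le e^{2Lt}g_0$; a matching lower bound would require an upper bound on $\Ric$, and the only one available is $|\Rm|\le a/t$, which blows up at $t=0$ and yields no uniform comparison. So the ``main obstacle'' you flagged is real in your approach, but it is an artifact of routing through Cheeger--Gromov compactness; the paper's route avoids it entirely.
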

\begin{proof}
Fix $p\in M$ and denote $\Omega_i=B_{g_0}(p,i)$, $i\in \mathbb{N}$ with $i\ge 5$.
By Lemma \ref{l-local}, there exist   $T(n,v,\b_0)>0, a(n,v,\b_0)>0, C(n,v,\beta_0)>0$ such that for all $i\in \mathbb{N}$, we can find a  \KR flow $g_i(t), t\in [0,T]$ defined on each $\Omega_i$ which satisfies
\begin{equation}\label{approx-solut}
\left\{
  \begin{array}{ll}
      &\Ric(g_i(x,t)\geq -C(n,v,\beta_0);\\
      &\OB(g_i(x,t)\geq -C(n,v,\beta_0);\\
      &|\mathrm{\Rm}(g_i(t))|\leq {a}t^{-1}   \\
      & \mathrm{inj}_{g_i(t)}(x)\geq \sqrt{a^{-1}t}.
  \end{array}
\right.
\end{equation}
for $(x,t)\in \Omega_i\times[0,T]$.

By \cite[Corollary 3.2]{Chen2009} (see also \cite{Simon2008}) and the modified Shi's higher order estimate \cite[Theorem 14.16]{ChowBookII}, we infer that for any $j,k\in \mathbb{N}$, we can find $C(n,k,\Omega_j,g_0,v,\b_0)>0$ so that for all $i>j$,
\begin{align}
\sup_{\Omega_{j-1}\times [0,T]}|\nabla^k \mathrm{Rm}(g_i(t))|\leq C(n,k,\Omega_j,g_0,v,\b_0).
\end{align}

By working on coordinate charts and  Ascoli-Arzel\`a Theorem, we may pass to a subsequence to obtain a smooth solution $g(t)=\lim_{i\rightarrow +\infty}g_i(t)$ of the K\"ahler-Ricci flow on $M\times [0,T]$ with $g(0)=g_0$ so that $|\mathrm{Rm}|\leq at^{-1}$ on $M\times (0,T]$ and
\begin{equation}
\left\{
\begin{array}{ll}
&\OB(g(x,t))\geq -C(n,v,\beta_0);\\
&\Ric(g(x,t))\geq -C(n,v,\beta_0).
\end{array}
\right.
\end{equation}
 for all $(x,t)\in M\times[0,T]$.
 Moreover, it is a complete solution by Lemma \ref{l-balls}.   This completes the proof of the theorem.
\end{proof}

\subsection{Partial \KR flow}

Next we want to prove the existence of  partial \KR flow, namely  Theorem  \ref{t-intro-pyramid} {\bf (I)}  which will be used  to study Gromov Hausdorff limit of complete \K manifolds with almost non-negative curvature as in  \cite{McLeodTopping2018,McLeodTopping2019}. We prove a more general version:

\begin{thm}\label{pyramid-exten-lma}
For any $n, v_0>0$ and any nondecreasing positive function $f(r):[0,\infty)\to (1,\infty)$, there exist  nondecreasing sequence   $a_k,\b_k\geq 1$ and nonincreasing   sequence $S_k>0$ such that the following holds:

Suppose $(M^n,g_0)$ is a complete non-compact \K manifold and $p\in M$ so that
\begin{enumerate}
\item $\Ric(g_0)\geq -f(r)$ on $B_{g_0}(p,r)$ for all $r>0$;
\item $\OB(g_0)\geq -f(r)$ on $B_{g_0}(p,r)$ for all $r>0$;
\item $V_{g_0}(p,1)\geq v_0$.
\end{enumerate}
Then for any $m\in \mathbb{N}$, there is a solution to the \KR flow $g_m(t)$ defined on a subset $D_m$ of spacetime given by
$$D_m=\bigcup_{k=1}^m \left(B_{g_0}(p,k)\times [0,S_k]\right),$$
with $g_m(0)=g_0$ on where it is defined and satisfies
\begin{equation}\label{py-esti}
\left\{
\begin{array}{ll}
&\Ric(g_m(t))\geq -\b_k;\\
&\OB(g_m(t))\ge -\b_k;\\
&|\Rm(g_m(t))|\leq a_kt^{-1};\\
&\mathrm{inj}_{g_m(t)}(x)\geq \sqrt{a_k^{-1}t}
\end{array}
\right.
\end{equation}
on each $B_{g_0}(p,k)\times (0,S_k]$.
\end{thm}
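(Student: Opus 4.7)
The plan is to construct $g_m$ by combining Lemma \ref{l-local} (to obtain a short-time \KR flow on the outermost ball $B_{g_0}(p,m)$) with iterated applications of the extension Lemma \ref{l-extension-2}, which extend the flow to longer time intervals on successively smaller sub-balls. This mirrors the scheme used in the proof of Theorem \ref{t-intro-local-ext-KRF}, but now run in parallel across all integer radii so as to produce a single pyramid-shaped flow.

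First, I would propagate the non-collapsing hypothesis from $p$ to all of $M$ via Bishop--Gromov: since $\Ric(g_0)\geq -f(r)$ on $B_{g_0}(p,r)$, relative volume comparison yields a nonincreasing function $v:[0,\infty)\to(0,\infty)$ depending only on $n,v_0,f$ such that $V_{g_0}(x,1)\geq v(d_{g_0}(p,x))$ for every $x\in M$. Indeed, $B_{g_0}(p,1)\subset B_{g_0}(x,d+1)$ with $d=d_{g_0}(p,x)$ gives $V_{g_0}(x,d+1)\geq v_0$, and Bishop--Gromov with $\Ric\geq -f(2d+1)$ on the relevant region converts this into a lower bound for $V_{g_0}(x,1)$. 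Next, define the universal sequences: for each $k\in\mathbb{N}$ set $\phi_k=f(k+C)$ and $v_k=v(k+C)$ for a constant $C$ large enough to absorb the buffer radii in Lemma \ref{l-local}, Lemma \ref{l-extension-2}, and Proposition \ref{OB-esti}. Lemma \ref{l-extension-2} applied with $\beta_0=\phi_k,v=v_k$ produces parameters $a_k,T_k,\lambda_k,\mu_k\geq 1$ depending only on $n,v_0,f,k$; by taking maxima, arrange $a_k$ to be nondecreasing. Finally, pick a nonincreasing sequence $S_k\leq T_k$ small enough that, starting from a flow on $B_{g_0}(p,k+1)\times[0,S_{k+1}]$, some fixed number $J_k$ of iterations of Lemma \ref{l-extension-2} at level $k$ reaches $B_{g_0}(p,k)\times[0,S_k]$; explicitly,
\bee
(1+\mu_k)^{2J_k}S_{k+1}\geq S_k,\qquad 5\lambda_k\sqrt{S_{k+1}}\sum_{j=0}^{J_k-1}(1+\mu_k)^j\leq 1.
\eee

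With these sequences fixed, I would construct $g_m$ inductively from the outside in. Apply Lemma \ref{l-local} with radius $m$, $\beta_0=\phi_m$, $v=v_m$ to obtain a \KR flow $g_m$ on $B_{g_0}(p,m)\times[0,S_m]$ satisfying the required curvature and injectivity estimates with constant $a_m$. Inductively, assuming $g_m$ has been extended to $B_{g_0}(p,k+1)\times[0,S_{k+1}]$ with the level-$(k+1)$ estimates, apply Lemma \ref{l-extension-2} $J_k$ times in succession at level $k$ to extend to $B_{g_0}(p,k)\times[0,S_k]$ with the corresponding level-$k$ estimates. Uniqueness of the \KR flow with smooth initial data guarantees that the extensions agree on overlaps, producing a single smooth flow $g_m$ defined on $D_m=\bigcup_{k=1}^m B_{g_0}(p,k)\times[0,S_k]$ with $g_m(0)=g_0$. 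The lower bounds $\Ric(g_m(t)),\OB(g_m(t))\geq -\beta_k$ on $B_{g_0}(p,k)\times(0,S_k]$ then follow from Proposition \ref{OB-esti} with input $\Ric(g_0),\OB(g_0)\geq -\phi_k$ on the enlarged ball $B_{g_0}(p,k+C)$ combined with the established curvature and injectivity bounds of $g_m$; after possibly shrinking $S_k$ further to accommodate the time constant of Proposition \ref{OB-esti}, this yields $\beta_k$ depending only on $n,v_0,f,k$.

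The main obstacle will be the combinatorial bookkeeping required to simultaneously arrange $a_k,\beta_k$ nondecreasing, $S_k$ nonincreasing, and to guarantee that the iterated extensions at each level terminate with constants uniform in $m$. A subtle point is the transition between levels: the input to the level-$k$ iteration carries the level-$(k+1)$ curvature estimate $|\Rm|\leq a_{k+1}t^{-1}$, which is a priori weaker than the level-$k$ estimate $|\Rm|\leq a_k t^{-1}$ that Lemma \ref{l-extension-2} requires as input; this is reconciled either by first improving the estimate on the smaller region via Proposition \ref{pseudo-OB} (with $r\sim \phi_k^{-1/2}$ and a covering by small balls) before switching levels, or by redefining $a_k$ through an appropriate monotonic envelope. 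The underlying iterative mechanism is already deployed at the single-ball level in the proof of Theorem \ref{t-intro-local-ext-KRF}; the novelty here is to run it simultaneously at every integer radius $k\leq m$, with the key observation that the extension parameters $\lambda_k,\mu_k,a_k$ depend only on $v_k,\phi_k$ (hence on $n,v_0,f,k$ and not on $m$), so a single choice of $S_k,a_k,\beta_k$ serves for all $m$.
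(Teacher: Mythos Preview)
Your proposal is correct and follows essentially the same route as the paper: propagate the volume lower bound by Bishop--Gromov, start the flow on the outermost ball via Lemma \ref{l-local}, and then extend inward level by level to produce the pyramid. The only difference is packaging: where you iterate Lemma \ref{l-extension-2} by hand $J_k$ times and invoke Proposition \ref{pseudo-OB} to reset the curvature scale at each level transition, the paper simply cites Theorem \ref{t-intro-local-ext-KRF} (which already encapsulates that iteration) together with Lemma \ref{l-extension-3} (which is precisely the ``improve $a_{k+1}t^{-1}$ to $a_kt^{-1}$ on a slightly smaller ball'' step you flag as the subtle point, proved via Proposition \ref{OB-esti} followed by Proposition \ref{pseudo-OB}). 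Two minor remarks: the appeal to uniqueness is unnecessary, since Lemma \ref{l-extension-2} and Theorem \ref{t-intro-local-ext-KRF} extend the given flow by construction; and your second suggested fix for the transition (``redefining $a_k$ through a monotonic envelope'') does not work on its own, since the theorem forces $a_k$ nondecreasing and hence $a_{k+1}\ge a_k$---the genuine mechanism is your first option, and that is exactly Lemma \ref{l-extension-3}.
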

\begin{proof}
The proof is similar to that in \cite[Theorem 1.2]{McLeodTopping2018}. For the sake of completeness, we sketch the proof here. By volume comparison, for $k\in \mathbb{N}$, there exists a sequence $v_k(n,v_0,f)>0$ such that for all $x\in B_{g_0}(p,k+4)$, $V_{g_0}(x,1)\geq v_k$.

{\bf Part A.} By Lemma \ref{l-local}, for each $k\ge 2$, there exist $a_k(n,v_k, f(k+4))>1, \beta_k(n,v_k,f(k+4))\ge 1$, $T_k(n,v_k, f(k+4))>0$ and smooth solution $g_k(t)$ defined on $B_{g_0}(p,k)\times[0,T_k]$ such that

\bee
\left\{
\begin{array}{ll}
&\Ric(g_k(t))\geq -\b_k;\\
&\OB(g_k(t))\ge -\b_k;\\
&|\Rm(g_k(t))|\leq a_kt^{-1};\\
&\mathrm{inj}_{g_k(t)}(x)\geq \sqrt{a_k^{-1}t}
\end{array}
\right.
\eee
We may assume that $a_k$ is nondecreasing.

{\bf Part B.} By Theorem \ref{t-intro-local-ext-KRF}, Lemma \ref{l-extension-3} and Proposition \ref{OB-esti}, for each $k$ there is $\wt T_k(n, v_{k+1}, a_k, a_{k+1})>0 $ such that for any smooth solution $h(t)$ defined on   $B_{g_0}(p, k+1)\times[0,T]$ with $h(0)=g_0$ so that
\bee
\left\{
\begin{array}{ll}
&\Ric(h(t))\geq -\b_{k+1};\\
&\OB(h(t))\ge -\b_{k+1};\\
&|\Rm(h(t))|\leq a_{k+1}t^{-1};\\
&\mathrm{inj}_{h(t)}(x)\geq \sqrt{a_{k+1}^{-1}t}
\end{array}
\right.
\eee
can be extended to a smooth solution $\tilde h(t)$ of the \KR flow on  $B_{g_0}(p,k)\times[0,    \wt T_k]$ so that $\tilde h(t)=h(t)$ on $[0,\wt T_k\wedge T]$ and satisfies
\bee
\left\{
\begin{array}{ll}
&\Ric(h(t))\geq -\b_{k};\\
&\OB(h(t))\ge -\b_{k};\\
&|\Rm(h(t))|\leq a_{k}t^{-1};\\
&\mathrm{inj}_{h(t)}(x)\geq \sqrt{a_{k}^{-1}t}
\end{array}
\right.
\eee
on $B_{g_0}(p,k)\times [0,\tilde T_k]$ by choosing larger $\beta_k$ and $a_k$. We may adjust so that $\tilde T_k$ is non-increasing.

 Define $S_k=\tilde T_k$ for $k\in \mathbb{N}$. Now fixed $m\in \mathbb{N}$ with $m \ge 2$. By {\bf Part A}, we can find a solution $g_m(t)$ to the \KR flow defined on $B_{g_0}(p,m+1)\times [0,T_{m+1}]$. Then using {\bf Part B}, $g_m(t)$ admits a local extension on $B_{g_0}(p, m)\times[0,S_m]$ so that
\bee
\left\{
\begin{array}{ll}
&\Ric(g_{m}(t))\geq -\b_{m};\\
&\OB(g_m(t))\ge -\b_{m};\\
&|\Rm(g_m(t))|\leq a_{m}t^{-1};\\
&\mathrm{inj}_{g_m(t)}(x)\geq \sqrt{a_{m}^{-1}t}
\end{array}
\right.
\eee
By the choice of $S_k$, $g_m(t)$ can be extended to $[0,S_{m-1}]$ on $B_{g_0}(p,m-1)$, still denoted by $g_m(t)$, so that
\bee
\left\{
\begin{array}{ll}
&\Ric(g_{m}(t))\geq -\b_{m-1};\\
&\OB(g_m(t))\ge -\b_{m-1};\\
&|\Rm(g_m(t))|\leq a_{m-1}t^{-1};\\
&\mathrm{inj}_{g_m(t)}(x)\geq \sqrt{a_{m-1}^{-1}t}
\end{array}
\right.
\eee
on $B_{g_0}(p,m-1)\times[0,S_{m-1}]$. Inductively, $g_m(t)$ can be further extended to $[0,S_{k}]$ on each $B_{g_0}(p,k)$ for $m>k\ge2$  so that \eqref{py-esti} is true. This completes the proof of the theorem.
\end{proof}

\section{Applications}

\subsection{Gromov-Hausdorff limit of \K manifolds}

The first application is to use the \KR flow to smooth a metric space which is the limit of a sequence of complete strongly or weakly non-collapsing \K manifolds with the almost non-negative curvature conditions. Namely, we obtain Theorem \ref{t-intro-GH}. The proof of part (\textbf{I}) is more tedious, but the idea is similar to the more easy proof of part (\textbf{II}). Hence we begin to prove this part to illustrate the idea.

\begin{proof}
[Proof of Theorem \ref{t-intro-GH} (\textbf{II})]
By using the global existence result of the \KR flow, the proofs here follow almost verbatim from the arguments in \cite[Corollary 4]{BCRW}, \cite[Corollary 1.3]{Lai2019} and \cite[Theorem 1.8]{SimonTopping2017}.
By Theorem \ref{KRF-exist}, there is uniform constant $T,a$ and $L$ depending only on $n$ and $v$ such that for each $i$, there is a short-time solution $g_i(t)$ to the \KR flow defined on $M_i\times [0,T]$ with
\begin{enumerate}
\item  $\Ric(g_i(t))\geq -L$;
\item  $|\Rm(g_i(t))|\leq at^{-1}$;
\item $\text{inj}_{g_i(t)}(x)\geq \sqrt{a^{-1}t}$
\end{enumerate}
 on $M_i\times (0,T]$. By Lemma \ref{Lemma3.1ST}, for all $0\leq s<t<T$, $i\in \mathbb{N}$ and for all $x,y\in M_i$,
\begin{align}\label{dist-seq}
d_{g_i(s)}(x,y)-\b_n \sqrt{a}(\sqrt{t}-\sqrt{s})\leq d_{g_i(t)}(x,y)\leq e^{L(t-s)}d_{g_i(s)}(x,y).
\end{align}

On the other hand by Hamilton's compactness \cite{Hamilton1995}, we may pass it to subsequence so that $(M_i,g_i(t),p_i)\rightarrow (M_\infty,g_\infty(t),p_\infty)$ in the Cheeger-Gromov sense for $t\in (0,T]$. In particular, $g_\infty(t)$ is a complete \KR flow solution on $(0,T]$ which satisfies
\begin{align}\label{dit}
d_{g_\infty(s)}(x,y)-\b_n \sqrt{a}(\sqrt{t}-\sqrt{s})\leq d_{g_\infty(t)}(x,y)\leq e^{L(t-s)}d_{g_\infty(s)}(x,y)
\end{align}
for all $0<s<t<T$ and $x,y\in M_\infty$. Hence $d_\infty(x,y)=\lim_{s\rightarrow 0}d_{g_\infty(s)}(x,y)$ exists as a distance function.   The fact that $d_\infty$ and $d_{g_\infty(t)}$ generate the same topology as the one on $M_\infty$ follows from Lemma \ref{Lemma3.1ST}.

  It remains to show that $(M_i,d_{g_i},p_i)$ converges to $(M_\infty,d_\infty, p_\infty)$ in the pointed Gromov-Hausdorff sense. Here $g_i=g_i(0)$. By \cite[Definition 8.1.1]{BYS},    it suffices to prove the following.

\begin{claim}
For all $r>0,\e>0$, there exists $N$ such that for all $i>N$, we can find $f_i:B_{g_i}(p_i,r)\rightarrow M_\infty$ with
\bee\left\{
      \begin{array}{ll}
        f_i(p_i)=p_\infty; \\
        |d_{g_i}(x,y)-d_{\infty}(f_i(x),f_i(y))|<\e, \ \text{for all $x,y\in B_{g_i}(p_i,r)$}; \\
      B_{d_\infty}(p_\infty,r-\e)  \text{\ is a subset $\e$-neighborhood of $f_i(B_{g_i}(p_i,r))$ with respect to $d_\infty$}.
      \end{array}
    \right.
\eee
\end{claim}
Here $g_i=g_i(0)$.

To prove the claim, let $r>0$. Consider the identity embedding, $\phi_i: B_{g_i}(p_i, r)\to M_i$. By \eqref{dist-seq}, we have
\be\label{e-GH-1}
 \left\{
    \begin{array}{ll}
      \phi(p_i)=p_i; \\
      |d_{g_i}(x,y)-d_{g_i(t)}(\phi_i(x),\phi_i(y))|\le C_1t^\frac12(1+r), \ \text{for all $x,y\in B_{g_i}(p_i,r)$};   \\
        B_{g_i(t)}(p_i,r-C_1t^\frac12)\subset\phi_i( B_{g_i}(p_i,r))\subset B_{g_i(t)}(p_i, (1+C_1 t)r).
    \end{array}
  \right.
\ee
for $0<t<\min\{1,T\}$, for some constant $C_1$ independent of $i, t$ and $r$.
For fixed $t$,  $(M_i,g_i(t),p_i)$ converge to  $(M_\infty,g_\infty(t),p_\infty)$ in the Cheeger-Gromov sense and hence in the PGH sense, see \cite[Lemma 6.1]{SimonTopping2017}. This implies for any $\lambda>0$, $\delta>0$ there is $i_0$ such that for any $i\ge i_0$, there exists diffeomorphism $\theta_i: B_{g_i(t)}(p_i, \lambda)\to M_\infty$ satisfying the following for all $0<\mu<\lambda$.
\be\label{e-GH-2}
 \left\{
    \begin{array}{ll}
      \theta_i(p_i)=p_\infty; \\
      |d_{g_i(t)}(x,y)-d_{g_\infty(t)} (\theta_i (x),\theta_i(y))|<\delta, \ \text{for all $x,y\in B_{g_i(t)}(p_i,\lambda)$};   \\
        B_{g_\infty(t)}(p_\infty,\mu-\delta)\subset \theta_i(B_{g_i(t)}(p_i, \mu) \subset B_{g_\infty(t)}(p_\infty,\mu+\delta).
    \end{array}
  \right.
\ee
Here $i_0$ depends on $\lambda,\delta, t$. In fact the

By \eqref{dit},  for any $\rho>0$, let $\sigma: B_{g_\infty(t)}(p_\infty, \rho)\to M_\infty$ be the identity embedding $\sigma: B_{g_\infty(t)}(p_\infty,\rho)\to M_\infty$, then we have

\be\label{e-GH-3}
 \left\{
    \begin{array}{ll}
      \sigma(p_i)=p_i; \\
      |d_{g_\infty(t)}(x,y)-d_\infty (\sigma (x),\sigma(y))|\le C_2t^\frac12(1+\rho), \ \text{for all $x,y\in B_{g_\infty(t)}(p_\infty,\rho)$};   \\
        B_{d_\infty}(p_\infty,(1-C_2t)\rho)\subset B_{g_\infty(t)}(p_\infty,\rho).
    \end{array}
  \right.
\ee

Now let $\lambda=(1+C_1)r$, $\rho=\lambda+\delta$. Let $f_i=\sigma\circ\theta_i\circ \phi_i$   which is a well-defined map from $B_{g_i}(p_i,r)$ into $M_\infty$ so that
$f_i(p_i)=p_\infty$. In fact, if $x\in B_{g_i}(p_i,r)$, then $\phi_i(x)\in B_{g_i(t)}(p_i, (1+C_1t)r)=B_{g_i(t)}(p_i, \lambda)$ and $\theta_i(\phi_i(x))\in B_{g_\infty(t)}(p_\infty, \lambda+\delta)=B_{g_\infty(t)}(p_\infty, \rho)$. Hence for all $x, y\in B_{g_i}(p_i,r)$,
\be\label{e-GH-4}
\begin{split}
 &|d_{g_i}(x,y)- d_\infty(f_i(x),f_i(y))|\\
&\le
  |d_{g_i}(x,y)-d_{g_i(t)}(\phi_i(x),\phi_i(y))|\\
  &+  |d_{g_i(t)}(\phi_i(x),\phi_i(y))-d_{g_\infty(t)}(\theta_i\circ\phi_i(x),\theta_i\circ\phi_i(y))|\\
&
+|d_{g_\infty(t)}(\theta_i\circ\phi_i(x),\theta_i\circ\phi_i(y))-d_\infty (\sigma\circ\theta_i\circ\phi_i(x),\sigma\circ\theta_i\circ\phi_i(y))|\\
&\le C_1t^\frac12(1+r)+\delta+C_2t^\frac12(1+\rho)\\
=&(C_1+C_2)t^\frac12( r+(2+C_1)r+2+\delta).
\end{split}
\ee

Next let $\mu=r-C_1t^\frac12<\lambda$, by \eqref{e-GH-1}--\eqref{e-GH-3}, we have
\be\label{e-GH-5}
\begin{split}
f_i(B_{g_i}(p_i,r))=&\sigma\circ\theta_i\circ\phi_i(B_{g_i}(p_i,r))\\
 \supset&\sigma\circ\theta_i (B_{g_i(t)}(p_i,\mu)) \\
\supset& \sigma(B_{g_\infty(t)}(p_\infty,\mu-\delta)\\
\supset& B_{d_\infty}(p_\infty,(1-C_2t) (\mu-\delta))
\end{split}
\ee
Note that
$$
(1-C_2t) (\mu-\delta) =(1-C_2t)(r-C_1t^\frac12-\delta)
$$
For any $r>0$ and any $\e>0$, choose $\delta>0$ small enough, and then choose $t$ small enough, one can see that the claim is true. This completes the proof of the theorem.
\end{proof}

Next we want  to use the pyramid \KR flow to consider weakly non-collapsing complete \K manifolds with almost non-negative curvature condition. The following theorem covers Theorem \ref{t-intro-GH}{\textbf (I)} which is based on the construction by McLeod-Topping in \cite{McLeodTopping2018,McLeodTopping2019} and the local \KR flow construction.

\begin{thm}\label{KRF-GHH}
Suppose that $(M_i,g_i,J_i,p_i)$ is a sequence of complete, sooth pointed \K manifolds such that for some $\b_0,v_0>0$, we have
$\OB(g_i) \geq -\b_0$, $\Ric(g_i)\geq -\b_0$ on $M_i$ and $V_{g_i}(p_i,1)\geq v_0$ for all $i\in M$.
Then there exists a smooth complex manifold $(M_\infty,J_\infty)$, a point $x_\infty\in M_\infty$ and a complete distance metric $d_\infty:M_\infty\times M_\infty\rightarrow [0,+\infty)$ generating the same topology as the one on $M_\infty$ and a smooth Ricci flow $g(t)$ defined on a subset of $M_\infty\times (0,+\infty)$ that contains $B_d(p_\infty,k)\times (0,T_k]$ for $k\in \mathbb{N}$ with $d_{g(t)}\rightarrow d_\infty$ locally uniformly on $M$ as $t\rightarrow 0$ and after passing to a subsequence in $i$, we have $(M_i,d_{g_i},p_i)$ converges to $(M_\infty,d_\infty,p_\infty)$ in the pointed Gromov Hausdorff sense. Moreover, $g(t)$ is \K with respect to the complex structure $J_\infty$ on where it is defined.
\end{thm}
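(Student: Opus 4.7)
The plan is to run the pyramid K\"ahler--Ricci flow construction of Theorem \ref{pyramid-exten-lma} uniformly along the sequence, extract local smooth Cheeger--Gromov limits on each scale by Hamilton's compactness, glue these local limits into a single complex manifold following the McLeod--Topping scheme, and finally transport the Gromov--Hausdorff convergence from the flowed metrics down to $t=0$ using the distance distortion estimate.

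First I would apply Theorem \ref{pyramid-exten-lma} with the constant function $f(r)\equiv \beta_0$, which is legitimate because $\OB(g_i),\Ric(g_i)\ge -\beta_0$ on all of $M_i$. This produces universal sequences $a_k,\beta_k\ge 1$ and $S_k>0$ (depending only on $n,v_0,\beta_0$) such that, for every $i$ and every $m\in\mathbb{N}$, there is a K\"ahler--Ricci flow $g_{i,m}(t)$ on the pyramid domain $D_m=\bigcup_{k=1}^{m}B_{g_i}(p_i,k)\times[0,S_k]$ with $g_{i,m}(0)=g_i$ and the four bounds \eqref{py-esti} valid on each slab $B_{g_i}(p_i,k)\times(0,S_k]$. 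Fix $k$; on $B_{g_i}(p_i,k)\times(0,S_k]$ we have $|\Rm(g_{i,m}(t))|\le a_k t^{-1}$, $\mathrm{inj}_{g_{i,m}(t)}\ge \sqrt{a_k^{-1}t}$, together with local volume noncollapsing coming from the Ricci lower bound $-\beta_k$ and $V_{g_i}(p_i,1)\ge v_0$ via volume comparison. By Shi's higher order estimates all covariant derivatives of curvature are bounded on interior parabolic regions.

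Next, for each fixed $k$ and any interior slab $B_{g_i}(p_i,k-1)\times[\tau,S_k]$, Hamilton's compactness theorem applied (with $m=m(i)\to\infty$, e.g.\ $m(i)=i$) yields, after passing to a subsequence in $i$, a smooth pointed Cheeger--Gromov limit flow $g_\infty^{(k)}(t)$ on some open pointed manifold with boundary; since $g_{i,m}(t)$ is K\"ahler with respect to $J_i$, the almost-complex structures converge and $g_\infty^{(k)}(t)$ is K\"ahler with respect to a limiting complex structure $J_\infty^{(k)}$. A standard diagonal subsequence argument across $k=1,2,\dots$ and $\tau=\tau_j\downarrow 0$ produces a nested sequence of limits. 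Following the gluing procedure of McLeod--Topping \cite{McLeodTopping2018,McLeodTopping2019}, the uniqueness of limits on overlapping regions (combined with the distance estimate \eqref{dist-seq}/\eqref{dit} that compares $d_{g_{i,m}(t)}$ on different scales) lets one glue the $g_\infty^{(k)}(t)$ along compatible biholomorphisms to produce a single smooth complex manifold $(M_\infty,J_\infty)$, a base point $p_\infty$, and a K\"ahler--Ricci flow $g_\infty(t)$ defined on a space-time subset containing $B_{d_\infty}(p_\infty,k)\times(0,T_k]$ for a suitable $T_k>0$ (one may take $T_k=S_k$).

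With the pyramid flow on $M_\infty$ in hand, I repeat the argument used for Theorem \ref{t-intro-GH}\textbf{(II)} but carried out on each scale $k$ separately: by Lemma \ref{Lemma3.1ST} applied to $g_\infty(t)$ with the scale-$k$ bounds, $d_{g_\infty(t)}$ is Cauchy as $t\downarrow 0$ on each $B_{d_\infty}(p_\infty,k)$, so the limit $d_\infty(x,y):=\lim_{t\to 0}d_{g_\infty(t)}(x,y)$ exists and defines a complete metric inducing the manifold topology on $M_\infty$. To establish pointed Gromov--Hausdorff convergence of $(M_i,d_{g_i},p_i)$ to $(M_\infty,d_\infty,p_\infty)$, I build, for each $r>0$ and $\varepsilon>0$, an $\varepsilon$-approximation $f_i:B_{g_i}(p_i,r)\to M_\infty$ as in Part \textbf{(II)}: compose the identity embedding $\phi_i$ (whose distortion at time $t$ is controlled by \eqref{dist-seq} on scale $k\ge r+1$), with the Cheeger--Gromov diffeomorphism $\theta_i$ provided by Hamilton compactness at time $t$ on scale $r$, and with the identity embedding $\sigma$ on $M_\infty$ (whose distortion is controlled by \eqref{dit}); then choose $t$ small and $i$ large as in \eqref{e-GH-4}--\eqref{e-GH-5}.

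The step I expect to be the main obstacle is the gluing of the local Cheeger--Gromov limits $(g_\infty^{(k)}(t),J_\infty^{(k)})$ into a single globally defined complex manifold with a well-defined K\"ahler--Ricci flow. The difficulty is two-fold: one must ensure that the harmonic-type coordinate charts produced at different scales are compatible on overlaps up to a biholomorphism that intertwines the limiting complex structures, and one must control how balls with respect to $d_{g_\infty(t)}$ at different scales fit inside one another as $t\downarrow 0$, since the constants $a_k,\beta_k,S_k$ degenerate in $k$. Both issues are handled by the scheme of \cite{McLeodTopping2018,McLeodTopping2019} adapted to the K\"ahler setting by also passing complex structures to the limit and invoking the shrinking/expanding balls Lemma \ref{l-balls} together with the distance distortion Lemma \ref{Lemma3.1ST}.
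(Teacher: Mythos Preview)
Your proposal is correct and follows essentially the same strategy as the paper: pyramid K\"ahler--Ricci flows from Theorem \ref{pyramid-exten-lma}, local Hamilton compactness on each scale, McLeod--Topping gluing into a global limit, and the distance-distortion argument of Part \textbf{(II)} carried out scale by scale to produce $d_\infty$ and the PGH convergence.

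The one organizational difference worth noting is in the treatment of the complex structure. You pass the almost complex structures $J_i$ along with the metrics in each local Cheeger--Gromov limit and then glue ``along compatible biholomorphisms,'' whereas the paper first carries out the entire construction in the Riemannian category (producing $M_\infty$, $g_\infty(t)$, $d_\infty$ and the PGH convergence via \cite{McLeodTopping2018,McLeodTopping2019} verbatim) and only afterwards, as a separate Step 3, builds $J_\infty$: they pull back $J_i$ through the comparison diffeomorphisms $f_i^m$ to get tensors $J_{m,i}$ on $B_{d_\infty}(p_\infty,m-1)$, use $\nabla^{(f_i^m)^*g_i(t)}J_{m,i}=0$ together with Shi's estimates to obtain uniform $C^l$ bounds, extract limits $J_m$, and then let $m\to\infty$ to get a global parallel $J_\infty$ whose integrability follows from $\nabla^{g_\infty}J_\infty=0$. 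Your route is slightly cleaner, but to make it rigorous you should still check that the transition maps $\phi_l=F_{l+1}^{-1}\circ F_l$ intertwine the local limit complex structures (which follows from the tautology $(\phi_{l+1;i}^{-1}\circ\phi_{l;i})^*J_{l+1,i}=J_{l,i}$ and passage to the limit); the paper's separation of the Riemannian and complex steps avoids having to say this explicitly.
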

\begin{proof}

The existence and convergence of $(M_\infty, d_\infty, p_\infty, g(t))$ as a smooth manifold and Riemannian Ricci flow was originated in \cite[Theorem 5.1]{McLeodTopping2018}. In fact, one can prove this using the argument is similar to the  proof of Theorem  \ref{t-intro-GH} (\textbf{II}) if we obtain relations similar to \eqref{e-GH-1}--\eqref{e-GH-3}. These will be accomplished as follows.

{\sc Step 1:}
By Theorem \ref{pyramid-exten-lma} and Ascoli-Arzel\`a Theorem, there exists a sequence of $T_l$ and $a_l$ such that for any $i\in \mathbb{N}$, we can find a pyramid \KR flow $g_i(t)$ with $g_i(0)=g_i$ defined on $\bigcup_{l=1}^\infty B_{g_i}(p_i,l+1)\times [0,T_l]$ and satisfies
\begin{enumerate}
\item [\bf (a1)] $\textbf{inj}_{g_k(t)}(x)\geq \sqrt{a_l^{-1}t}$;
\item [\bf (b1)] $|\Rm(g_i(x,t))|\leq a_lt^{-1}$;
\item [\bf (c1)] $\Ric(g_i(x,t))\geq -a_l$;
\item [\bf (d1)] $\OB(g_i(x,t))\geq -a_l$
\end{enumerate}
for $(x,t)\in B_{g_i}(p_i,l+1)\times (0,T_l]$.   By shrinking balls Lemma \ref{l-balls}, {\textbf{(c1)}} and Lemma \ref{Lemma3.1ST}, one can see that for any $\e>0$, one may choose $T_l$ small enough which is independent of $i$ so that
$$
B_{g_i}(p_i, l+1-2\e)\subset B_{g_i(t)}(p_i, l+1-\e)\subset B_{g_i}(p_i, l+1);
$$
and  for $0<s<t<T_l$ and $x, y\in  B_{g_i}(p_i,\frac l2 l)$ there is $c_l>0$ independent of $i$ with

\be\label{e-GH-partial-1}
d_{g_i(s)}(x,y)-c_l(\sqrt t-\sqrt s)\le d_{g_i(t)}(x,y)\le e^{c_l(t-1)}d_{g_i(s)}(x,y).
\ee

 {\sc Step 2}: By the local Hamilton's compactness \cite[Lemma B.3]{McLeodTopping2018}, for each $l\ge 2$, there is a smooth pointed Riemannian  manifold $(N_l, h_l, q_l)$, a decreasing sequence $S_l>0$ with $T_l\ge S_l>0$ and a Ricci  flow $h_l(t)$  which may not be complete defined on   $N_l\times(0,S_l]$  with the following properties:

 \begin{itemize}
   \item[\bf (a2)] $B_{h_l(t)}(q_l,l+1)\Subset N_l$ for $t\in (0,S_l)$;
   \item [\bf (b2)]for $t\in (0,S_l)$, $B_{h_l(t)}(q_l, l)\subset \Omega_l$ which is the connected component of $q_l$ containing $q_l$ in
       $$
       \bigcap_{t\in (0,S_l)}B_{h_l(t)}(q_l, l+1);
       $$

   \item[\bf (c2)] passing to a subsequence, for each $i$, there is a smooth map
   $\phi_{l;i}$ from $\Omega_l$ into $B_{g_i(0)}(p_i,l+2)$ with $\phi_{l;i}(q_l)=p_i$, which is diffeomorphic onto its image such that $\phi_{l;i}^*(g_i(t))$ converge uniformly in $C^\infty$ norm in $\Omega_l\times[\delta,S_l]$ for all $\delta>0$.
 \end{itemize}

By a diagonal process, we may assume that the subsequence in $i$ is independent of $l$. Hence for each $l$, and for any $0<\delta<S_l$, if $i$ is large enough depending only on $l$ and $\delta$,
\begin{equation}\label{BB-in}
B_{g_i(t)}(p_i, l-\frac14)\subset\phi_{l;i}(\Omega_l)\subset \bigcap_{s\in (\delta,S_l)}B_{g_i (s)}(p_i, l+\frac54).
\end{equation}
In particular, $\phi_{l;i}(\Omega_l)\subset \phi_{l+1;i}(\Omega_{l+1})$. Hence
$$
\phi_{l+1;i}^{-1} \circ\phi_{l;i}:\Omega_l\to \Omega_{l+1}
$$
is a well-defined  diffeomorphism onto its image, which maps $q_l$ to $q_{l+1}$. Combining these  with {\bf (c2)}, let $i\to\infty$, we conclude $\phi_{l+1;i}^{-1} \circ\phi_{l;i}$ converge to a smooth map
$$
\phi_l:\Omega_l\to \Omega_{l+1}
$$
which is a diffeomorphism onto its image, with $\phi_l(q_l)=q_{l+1}$ such that $$\phi_l^*(h_{l+1}(t))=h_l(t)$$ for $0<t<S_{l+1}$ on $\phi_l(\Omega_l)$. Hence one can glue $\Omega_l$ to be a smooth manifold. More precisely, by \cite[Theorem C.1]{McLeodTopping2018}, there exists a smooth manifold $M_\infty$, a point $p_\infty\in M_\infty$ smooth maps diffeomorphisms
$$
F_l:\Omega_l\to M_\infty
$$
which is diffeomorphic onto its image, so that $F_l(q_l)=p_\infty$ and
\begin{itemize}
  \item [{\bf (a3)}] $M_\infty=\bigcup_{l=2}^\infty F_l(\Omega_l)$;
  \item [{\bf (b3)}] $F_l(\Omega_l)\subset F_{l+1}(\Omega_{l+1})$;
  \item [{\bf (c3)}] $\phi_l=F_{l+1}^{-1}\circ F_l$.
\end{itemize}
 Since $\phi_l^*(h_{l+1}(t))=h_l(t)$, by the above, $(F_l)_* (h_l(t))$ can be glued together to a smooth $g_\infty(t)$ which satisfies the Ricci flow equation. Note that  on each $F_l(\Omega_l)$, $g_\infty(t)$ is defined on $(0,S_{l})$ and $M_\infty$ is complete by {\bf (b2)}. Moreover, in $F_l(\Omega_l)\times(0,S_l)$, we have

  \begin{itemize}
       \item [{\bf (a4)}] $\Ric(g_\infty(t))\ge -a_l$;
       \item[{\bf (b4)}] $|\Rm(g_\infty(t)|\le a_l/t$;
       \item[{\bf (c4)}] $\OB(g_\infty(t))\geq -a_l$.
     \end{itemize}

  Let $f_i^l=\phi_{l;i}\circ F_l^{-1}$. Then
  $f^l_i:F_l(\Omega_l)\rightarrow B_{g_k}(p_k,l+1)$ are smooth maps for $i$ that maps $p_\infty$ to $p_i$ and are diffeomorphic onto their images so that $(f^l_k)^* g_i(t)\rightarrow g_\infty(t)$ locally smoothly uniformly on compact subsets of $F_l(\Omega_l)\times (0,S_l]$.   Moreover, $\Omega_l\supset B_{g_\infty(t)}(p_\infty, l)$ and
\be\label{e-GH-partial-2}
d_{g_\infty(s)}(x,y)-c_l(\sqrt t-\sqrt s)\le d_{g_\infty(t)}(x,y)\le e^{c_l(t-s)}d_{g_i(s)}(x,y).
\ee
for $ x,y\in  \in \bigcap_{\tau\in(0,S_l)} B_{g_\infty}(p_\infty, l/2)\subset \Omega_l$  for some possible larger $c_l$.

Together with \cite[Lemma 3.1, Corollary 3.3]{SimonTopping2016}, one can modify  the proof of   Theorem \ref{t-intro-GH} (\textbf{II}) to conclude that the distance function $d_\infty(s)$  induced by $g_\infty(s)$ will converge to a distance function $d_\infty$. Moreover,
   \be\label{e-GH-partial-3}
d_\infty (x,y)-c_l\sqrt t \le d_{g_\infty(t)}(x,y)\le e^{c_lt}d_\infty(x,y).
\ee
for $ x,y \in B_{d_\infty}(p_\infty,l/4)$, say.

Then $d_\infty$ and $d_\infty(s),s>0$ induce the same topology on $M_\infty$ thanks to Lemma \ref{Lemma3.1ST}. As in the proof of Theorem \ref{t-intro-GH} {\textbf(II)}, one can conclude that after passing to a subsequence, $(M_i,d_{g_i},p_i)$ converge to $(M_\infty,d_\infty,p_\infty)$ in the PGH sense.

{\sc Step 3:} Building on the above construction, it remains to prove that $M_\infty$ admits a complex structure $J_\infty$ and $g_\infty(t)$ is \K with respect to $J_\infty$ for $t>0$. The main ingredient here is that we constructed $g_i(t)$ so that it preserved K\"ahlerity locally. From {\sc Step 2} and \eqref{BB-in}, we can restrict to a smaller set so that $f_i^m: B_{d_\infty}(p_\infty,m-1)\rightarrow B_{g_i}(p_i,m+1)\subset M_k$ are smooth maps that map $p_\infty$ to $p_i$ such that $(f_i^m)^*g_i(t)\rightarrow g_\infty(t)$ smoothly uniformly on any $[\delta,T_m]$, $\delta>0$. Now we can follow closely the argument in \cite[Chapter 3]{ChowBookI} to construct the complex structure. Consider the sequence of $(1,1)$ tensor $J_{m,i}=((f_i^m)^{-1})_*J_i(f_i^m)_*$ on $B_{d_\infty}(p_\infty,m-1)$. Clearly on $B_{d_\infty}(p_\infty,m-1)$,
\begin{equation}\label{cpx-str-1}
\left\{
\begin{array}{ll}
&\nabla^{(f_i^m)_*g_i(t)}J_{m,i}=0;\\
&(J_{m,i})^2=-\mathrm{Id}_{TM_\infty};\\
&(f_i^m)_*g_i(t) \circ J_{m,i} =(f_i^m)_*g_i(t).
\end{array}
\right.
\end{equation}

For any $\delta>0$, since we have $(f_i^m)_*g_i(t)\rightarrow g_\infty(t)$ smoothly uniformly on $B_{d_\infty}(p_\infty,m-1)\times [\delta,T_m]$, \eqref{cpx-str-1}, {\bf (b)} and Shi's estimate \cite{Shi1989} implies that $J_{m,i}$ are locally uniformly bounded in any $C^l$ norm with respect to $g_\infty(T_m)$. Hence by passing to subsequence in $k$ and Ascoli-Arzel\`a Theorem, $J_{m,i}$ converges to a smooth $(1,1)$ tensor $J_m$ in $C^\infty_{loc}$ on compact sets of $B_{d_\infty}(p_\infty,m-1)$. Moreover, by passing \eqref{cpx-str-1} to $J_m$, we have a sequence of locally defined almost complex structure $J_m$ on $B_{d_\infty}(p_\infty,m-1)$ such that
\begin{equation}\label{cpx-str-2}
\left\{
\begin{array}{ll}
&\nabla^{g(t)} J_m=0;\\
&(J_m)^2=-\mathrm{Id}_{TM_\infty};\\
&g_\infty(t) \circ J_m =g_\infty(t).
\end{array}
\right.
\end{equation}

By passing {\bf (b1)} to limiting solution, $g_\infty(t)$ is a pyramid Ricci flow solution such that for all $k\in \mathbb{N}$,
$$|\Rm(g_\infty(t))|\leq \frac{a_{k+1}}{t}\quad \text{on}\;\;B_{d_\infty}(p_\infty,k)\times (0,T_k].$$

This together with Shi's higher order estimates \cite{Shi1989} and \eqref{cpx-str-2} implies that for fixed compact set $\Omega=B_{d_\infty}(p_\infty,k)$ and  $m$ sufficiently large, $J_m$ are uniformly bounded in any $C^l_{\Omega}$ norm with respect to metric $g(T_k)$. Therefore by diagonal subsequence argument and Ascoli-Arzel\`a Theorem again, $M_\infty$ admits a smooth almost complex structure $J_\infty=\lim_{m\rightarrow +\infty}J_m$.

To see that $J_\infty$ is a complex structure, it suffices to point out that the Nijenhuis tensor vanishes. This follows easily from the local existence of almost Hermitian metric $h=g_\infty(T_m)$ on each $B_{d_\infty}(p_\infty,m-1)$ so that $\nabla^h J_\infty=0$ where $\nabla^h$ is the Levi-Civita connection of $h$. The K\"ahlerity of $g_\infty(t)$ follows immediately.
\end{proof}

\begin{rem}
The general version of Gromov compactness theorem will imply the existence of limit space if the $\Ric(g_i)$ is bounded from below by a function of distance function $d_{g_i}(x,p_i)$, for example see \cite[Corollary 30]{Petersen2006}. In this case, it is also clear from the proof in \cite[Theorem 5.1]{McLeodTopping2018} that one may also allow the $\OB(g_i)$ and $\Ric(g_i)$ to be bounded from below by a uniform decreasing function of $d_{g_i}(x,p_i)$.
\end{rem}

\subsection{Applications to  \K manifolds with non-negative curvature}

First, we establish a longtime existence result of the \KR flow under maximal volume growth condition.
\begin{thm}\label{t-existence-NOB}
Suppose $(M^n,g_0)$ is a complete non-compact \K manifold with non-negative orthogonal bisectional curvature, non-negative Ricci curvature and maximal volume growth. There is a complete solution to the \KR flow $g(t)$ with $g(0)=g_0$ on $M\times[0,\infty)$ such that
 \begin{enumerate}
   \item [(i)] $\OB(g(t))\ge0, \Ric(g(t))\ge 0$ for all $t\ge 0$;
   \item [(ii)] $|\Rm(g(t))|\le at^{-1}, \mathrm{inj}_{g(t)}(x)\ge \sqrt{a^{-1}t}$ for some $a>0$ for all $x\in M, t>0$.
   \item [(iii)] $V_{g(t)}(B_{g(t)}(x,r))\ge vr^{2n}$ for some $v>0$ for all $x\in M, \; r>0$ and $t>0$.
 \end{enumerate}

\end{thm}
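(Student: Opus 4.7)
The plan is to combine the short-time existence of Theorem \ref{KRF-exist}, the preservation of non-negativity from Corollary \ref{c-NOB}, and a uniform non-collapsing bound preserved under the flow, then iterate to produce a long-time \KR flow on $[0,\infty)$. Since $\Ric(g_0)\ge 0$ and $(M,g_0)$ has maximal volume growth, Bishop-Gromov volume comparison upgrades the asymptotic volume lower bound to the pointwise estimate $V_{g_0}(B_{g_0}(x,1))\ge v_0$ for a uniform $v_0>0$ and every $x\in M$. Theorem \ref{KRF-exist} then produces a complete solution $g(t)$ of the \KR flow on $M\times[0,T_0]$ with $|\Rm(g(t))|\le a_0 t^{-1}$ and $\mathrm{inj}_{g(t)}(x)\ge\sqrt{a_0^{-1}t}$, where $a_0,T_0$ depend only on $n$ and $v_0$. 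By Corollary \ref{c-NOB}, $\OB(g(t))\ge 0$ and $\Ric(g(t))\ge 0$ on $(0,T_0]$.

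The iteration step relies on a uniform non-collapsing bound $V_{g(T_0)}(B_{g(T_0)}(x,1))\ge v_1(n,v_0)>0$. I would establish this via Perelman's no-local-collapsing theorem: the curvature bound $|\Rm|\le a_0/t$ together with the initial uniform non-collapsing give $V_{g(t)}(B_{g(t)}(x,\sqrt{t/a_0}))\ge\kappa(t/a_0)^n$, and the Bishop-Gromov monotonicity applied to $g(T_0)$ (using $\Ric(g(T_0))\ge 0$) then upgrades this to non-collapsing at unit scale. With such $v_1$ in hand, $g(T_0)$ verifies the hypotheses of Theorem \ref{KRF-exist} with $v_1$ in place of $v_0$, and the flow extends to $[T_0,T_0+T_1]$ with $T_1=T(n,v_1)>0$. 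Iterating, the non-collapsing constant stays uniformly bounded below at each restart, so the length of every extension is uniformly bounded below, and hence the flow extends to all of $[0,\infty)$.

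The conclusions then follow: (i) is Corollary \ref{c-NOB}, and (iii) is the uniform unit-scale non-collapsing combined with Bishop-Gromov (since $\Ric(g(t))\ge 0$). The main obstacle is (ii): on each iteration interval $[T_k,T_{k+1}]$, Theorem \ref{KRF-exist} only provides $|\Rm(g(t))|\le a/(t-T_k)$, which blows up near each restart and therefore does not immediately combine into a uniform $a/t$ bound on all of $(0,\infty)$. To upgrade this to a global $|\Rm(g(t))|\le a\,t^{-1}$, one must separately establish a long-time $1/t$-decay of the curvature for large $t$; this is the \emph{hard part} of the theorem, and can be handled by Hamilton-style maximum-principle arguments for the scalar curvature under $\OB,\Ric\ge 0$ combined with an algebraic pointwise control of $|\Rm|$ by $R$, or equivalently by invoking the longtime-existence and curvature-decay results of Chau-Tam and Ni for the \KR flow under analogous positivity and maximal volume growth hypotheses.
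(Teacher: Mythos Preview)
Your proposal has a genuine gap exactly where you flag the ``hard part'': obtaining a uniform $|\Rm(g(t))|\le a\,t^{-1}$ bound on all of $(0,\infty)$. The iteration only yields $|\Rm(g(t))|\le a/(t-T_k)$ on each restart interval, and neither of your suggested repairs works under the present hypotheses. First, under merely $\OB\ge0$ and $\Ric\ge0$ there is \emph{no} pointwise algebraic control of $|\Rm|$ by the scalar curvature: the holomorphic sectional curvature can be negative (the Ni--Zheng examples discussed in the appendix have $\OB\ge0$, $\Ric>0$ but $H<0$ somewhere), so a Hamilton-style scalar-curvature argument cannot bound the full tensor. Second, the Chau--Tam and Ni longtime results you allude to are proved under $\mathrm{BK}\ge0$, which is strictly stronger than $\OB\ge0$ together with $\Ric\ge0$; invoking them here begs the very question this theorem is meant to settle. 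A secondary concern is that your iteration does not obviously reach infinite time: the non-collapsing constant $v_{k+1}=v_{k+1}(n,v_k)$ produced at each restart has no a priori uniform lower bound, so the extension lengths $T_k$ could shrink summably.

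The paper sidesteps all of this with a single observation you are missing: the hypotheses are \emph{scale-invariant}. Maximal volume growth means $V_{g_0}(x,r)\ge v\,r^{2n}$ for every $r>0$, so for each $R>0$ the rescaled metric $h^R_0=R^{-2}g_0$ still satisfies $\OB\ge0$, $\Ric\ge0$ and $V_{h^R_0}(x,1)\ge v$. One application of Theorem~\ref{KRF-exist} to $h^R_0$ gives a \KR flow $h^R(t)$ on $[0,T]$ with the scale-invariant estimates $|\Rm|\le a/t$ and $\mathrm{inj}\ge\sqrt{a^{-1}t}$, and Corollary~\ref{c-NOB} preserves $\OB,\Ric\ge0$. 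Scaling back, $g_R(t)=R^2h^R(R^{-2}t)$ is a \KR flow on $[0,TR^2]$ with $g_R(0)=g_0$ and the \emph{same} bounds $|\Rm|\le a/t$, $\mathrm{inj}\ge\sqrt{a^{-1}t}$. Letting $R\to\infty$ and extracting a subsequential limit (as in the proof of Theorem~\ref{KRF-exist}) produces a solution on $M\times[0,\infty)$ satisfying (i) and (ii) directly; (iii) follows from the Simon--Topping volume lemma applied to $h^R(t)$ before rescaling, together with Bishop--Gromov. The rescaling trick converts the desired $1/t$-decay for large $t$ into the already-proved $1/t$-bound for short time, so no separate longtime curvature argument is needed.
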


\begin{proof} Since $(M,g_0)$ has maximal volume growth, $V_{g_0}(B(x,r))\ge v r^{2n}$ for some $v>0$ for all $x\in M, r>0$. For any $R>>1$, let $ h^R_0=R^{-2}g_0$. Then $\OB(h^R_0)\ge 0, \Ric(h^R_0)\ge 0$ and $V_{h^R_0}(B(x,1))\ge v$ for all $x$ by volume comparison.  By Theorem \ref{KRF-exist}, there exist $a>0, T>0$ independent of $R$ so that  a short-time solution $  h^R(t)$ to the \KR flow defined on $M\times [0,T]$ $h^R(0)=h^R_0$ such that  and for all $(x,t)\in M\times (0,T]$,
\begin{equation}
\left\{
\begin{array}{ll}
&|\Rm(h^R(t))|\leq a t^{-1};\\
&\mathrm{inj}_{h^R(t)}(x)\geq \sqrt{a^{-1}t}
\end{array}
\right.
\end{equation}
By Corollary \ref{c-NOB}, we conclude that $\OB(h^R(t))\ge0, \Ric(h^R(t))\ge 0$.
 Rescale it back to $g_R(t)=R^2h^R(R^{-2}t)$. Then $g_R(t)$ is a solution to the \KR flow  so that it is defined on $M\times [0,T\cdot R^2]$, $g_R(0)=g_0$ and satisfies
\begin{equation}
\left\{
\begin{array}{ll}
&|\Rm( g_R(t))|\leq at^{-1};\\
&\text{inj}_{g_R(t)}(x)\geq \sqrt{a^{-1}t}\\
&\OB(g_R(t))\ge 0, \Ric(g_R(t))\ge0.
\end{array}
\right.
\end{equation}
for $(x,t)\in M\times(0, T\cdot R^2]$. As in the proof of Theorem \ref{KRF-exist}, we may find  $R_i\rightarrow +\infty$ so that   $g(t)=\lim_{i\rightarrow +\infty} g_{R_i}(t)$ is defined on $M\times [0,+\infty)$. And the convergence is locally uniformly in any $C^k$ norm on any compact sets of $M\times[0,T]$. Moreover, $g(t)$ satisfies (i) and (ii) in the theorem.

On the other hand, by \cite[Lemma 2.3]{SimonTopping2016}, by choosing a smaller $T$, one can conclude that $V_{h^R(t)}(x,1)\ge c>0$ for some $c$ independent of $R, t, x$. From this and the volume comparison, one can conclude that (iii) is true.
\end{proof}

{ Now we are ready to prove the uniformization result.
\begin{cor}\label{c-uniformization-1}
Suppose $(M^n,g_0)$ is a complete non-compact \K manifold with non-negative orthogonal bisectional curvature, non-negative Ricci curvature and maximal volume growth. Then $M$ is biholomorphic to a pseudoconvex domain in $\mathbb{C}^n$ which is homeomorphic to $\mathbb{R}^{2n}$. Moreover, $M$ admits a non-trivial holomorphic function with polynomial growth.
\end{cor}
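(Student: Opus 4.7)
The plan is to apply Theorem \ref{t-existence-NOB} to obtain a long-time \KR flow $g(t)$ on $M\times[0,\infty)$ starting from $g_0$, satisfying $\OB(g(t))\geq 0$, $\Ric(g(t))\geq 0$, $|\Rm(g(t))|\leq a/t$, $\mathrm{inj}_{g(t)}(x)\geq \sqrt{a^{-1}t}$, and preserved maximal volume growth $V_{g(t)}(x,r)\geq v r^{2n}$. Fixing any $t_0>0$, the metric $g(t_0)$ has bounded geometry while retaining all the initial curvature and volume positivity of $g_0$. This is the key analytic gain over working with the rough initial metric, and it places $(M,g(t_0))$ in a class where the analytic machinery for nonnegatively curved \K manifolds applies with full strength.

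For the non-constant holomorphic function with polynomial growth, I would apply the strategy of Liu \cite{Liu2016} to $(M,g(t_0))$. Since $g(t_0)$ has bounded curvature together with $\OB\geq 0$, $\Ric\geq 0$, and maximal volume growth, the Cheeger--Colding theory guarantees that the tangent cone of $(M,g(t_0))$ at infinity is a metric cone whose regular part carries a natural \K cone structure. The holomorphic functions are then produced via the $L^2$ $\bar\partial$-method with H\"ormander weights, combined with a three-circle type inequality which localizes the growth order. In \cite{Liu2016} this argument is carried out under $\mathrm{BK}\geq 0$; the adaptation to the weaker hypothesis $\OB\geq 0$ together with $\Ric\geq 0$ is where the bounded geometry supplied by the \KR flow, and the preservation result Corollary \ref{c-NOB}, become essential for transferring positivity information between time slices.

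For the biholomorphism to a pseudoconvex domain in $\mathbb{C}^n$ homeomorphic to $\mathbb{R}^{2n}$, I would first establish that $M$ is Stein by producing a $C^\infty$ strictly plurisubharmonic exhaustion function. One natural candidate is built from modulus-squared sums of the polynomial-growth holomorphic functions constructed above, or from a Busemann-type function on $(M,g(t_0))$ smoothed by the heat equation along the flow; under $\Ric(g(t))\geq 0$ along the \KR flow, Ni--Tam type arguments control the propagation of plurisubharmonicity. Once $M$ is shown to be Stein, its homotopy type is that of a CW complex of real dimension at most $n$; combined with simple-connectedness (coming from $\Ric\geq 0$ with maximal volume growth), this yields contractibility, and hence $M$ admits a proper holomorphic embedding into $\mathbb{C}^n$ as a pseudoconvex domain homeomorphic to $\mathbb{R}^{2n}$.

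The main obstacle is executing Liu's polynomial-growth construction under $\OB\geq 0$ instead of the stronger $\mathrm{BK}\geq 0$: the three-circle inequality in \cite{Liu2016} relies on positivity of holomorphic bisectional curvature, so its adaptation must exploit the special algebraic form of orthogonal bisectional curvature together with $\Ric\geq 0$ and the curvature decay $|\Rm(g(t))|\leq a/t$ along the flow. A secondary subtlety is ensuring that the plurisubharmonic exhaustion can be made strictly plurisubharmonic and smooth on all of $M$ (rather than only outside a compact set), which is required for the final Stein embedding step.
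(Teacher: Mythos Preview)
Your proposal has two substantial gaps. First, your route to the biholomorphism relies on the chain ``Stein $+$ simply connected $\Rightarrow$ contractible $\Rightarrow$ proper holomorphic embedding into $\mathbb{C}^n$,'' and neither implication holds in general: a simply connected Stein manifold need not be contractible (it only has the homotopy type of an $n$-dimensional CW complex; e.g.\ an affine quadric is homotopy equivalent to $S^2$), and a contractible Stein $n$-fold is not known to embed properly into $\mathbb{C}^n$ without further input. Second, you correctly flag that Liu's three-circle argument in \cite{Liu2016} uses $\mathrm{BK}\geq 0$ rather than $\OB\geq 0$, but you give no mechanism to close this gap; the bounded geometry of $g(t_0)$ does not by itself supply the monotonicity the three-circle step requires.

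The paper proceeds quite differently and more directly. For the biholomorphism it rescales the long-time flow to $g_i=i^{-1}g(i)$ and feeds this sequence of complete \K metrics with uniformly bounded curvature and uniform injectivity radius into \cite[Theorem 1.2]{ChauTam2008}, which outputs precisely a biholomorphism onto a pseudoconvex domain in $\mathbb{C}^n$ homeomorphic to $\mathbb{R}^{2n}$; simple-connectedness is read off from $\Ric\geq 0$ together with the injectivity radius bound. For the holomorphic function the paper avoids tangent-cone machinery entirely: the decay $|\Rm(g(t))|\leq a(t+1)^{-1}$ along the flow, via the argument of \cite{NiTam2003}, forces integrability of the averaged scalar curvature of $g_0$ along rays, so \cite{NiTam2013} solves $\sqrt{-1}\partial\bar\partial u=\Ric(g_0)$ with $u$ of logarithmic growth; a heat-flow and splitting argument from \cite{NiNiu2019} then upgrades $u$ to a \emph{strictly} plurisubharmonic function of logarithmic growth on $M$ (the flat factor is ruled out using $\OB\geq 0$), after which the standard $L^2$ $\bar\partial$-estimate produces the desired polynomial-growth holomorphic function.
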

\begin{proof}
Let $g(t)$ be as in Theorem \ref{t-existence-NOB}.
From the injectivity radius lower bound and non-negative Ricci curvature in the theorem, one can conclude that $M$ is simply connected. By taking $g_i=i^{-1}g(i)$ and by \cite[Theorem 1.2]{ChauTam2008}, $M$ is biholomorphic to a pseudoconvex domain in $\mathbb{C}^n$ which is homeomorphic to $\mathbb{R}^{2n}$. One can also prove the biholomorphism using the argument in \cite[section 9]{Shi1997}, see also \cite[Theorem 1.2]{ChenZhu2003}.

It remains to show the existence of non-trivial holomorphic function. Since $g(1)\leq g(0)$, we can assume $|\Rm(t)|\leq a(t+1)^{-1}$ for all $t>0$ by working on $g(1)$ instead of $g_0$. Let $F(x,t)= \log \frac{\det g(t)}{\det g_0}$ and $k(x,s)=\aint_{B_{g_0}(x,s)} R_{g_0} \,d\mu_{g_0}$. By the proof of \cite[Theorem 2.1]{NiTam2003}, curvature estimates $|\Rm|\leq a(t+1)^{-1}$ and Ricci flow equation, we can find $C>0$ such that for all $t>0$,$\int^t_0 sk(x,s) ds \leq C \log (t+1)$ and hence $\int_0^\infty k(x,s)ds <+\infty$. Using the result in \cite[Theorem 1.2]{NiTam2013}, we can find smooth function $u$ with logarithmic growth such that $\ddb u=\Ric(g_0)$. Let $v(t)$ be the heat equation with initial data $u$. Note that $M$ is simply connected. By \cite[Theorem 2.1, Proposition 2.1]{NiNiu2019}, there is splitting $M=M_1\times M_2$ where $\ddb v>0$ on $M_1$ and $\ddb v=0$ on $M_2$. By Cheng-Yau's gradient estimate, $v(x,t)=c=u(x)$ on $M_2$. Hence $M_2$ has nonnegative $\OB$ and $\Ric\equiv 0$. Therefore $M_2$ must be flat, see \cite[Page 9-10]{NiNiu2019}. This can also be done by applying Hamilton's strong maximum principle on $\Ric(g(t))$ directly as $\OB(g(t))\geq 0$. Therefore, there is a strictly plurisubharmonic function on $M$ with logarithmic growth. The existence now follows from standard argument using $L^2$ estimate of $\bar \partial$ equation \cite[Proposition 3.2]{Ni1998}, see \cite[Corollary 6.2]{NiTam2003-2} for example. This completes the proof.
\end{proof}

}

\appendix

\section{Some auxiliary results}
In this section, we collect some preliminary results for the construction of local \KR flow. First, we have a local existence lemma of \KR flow from \cite[Lemma 5.1]{LeeTam2017} which is based on using the Chern-Ricci flow. This is a \K analogy of \cite[Lemma 4.3]{SimonTopping2017}, see also \cite[Lemma 6.2]{Hochard2016}.

\begin{lma}\label{l-extension-1} There exists $1>\a_n>0$ depending only on $n$ so that the following is true: Let $(N^n,h_0)$ be a \K manifold and  $U\subset N$  is a precompact  open set. Let $\rho>0$ be such that $B_{h_0}(x,\rho)\subset\subset N$, $|\Rm(h_0)|(x)\le \rho^{-2}$ and $  \mathrm{inj}_{h_0}(x)\ge \rho$ for all $x\in U$. Assume $U_\rho$ is non-empty. Then for any component $X$ of $U_\rho$, there is a solution $h(t)$ to the \KR flow  on  $X\times[0,\a_n \rho^2]$, where for any $\lambda>0$
$$
U_\lambda:=\{x\in U|\ B_{h_0}(x,\lambda)\subset\subset U\},
$$
with $g(t)$ satisfies the following:
\begin{enumerate}
  \item [(i)] $h(0)=h_0$ on $X$
  \item [(ii)]
  $$
  \a_n h_0\le h(t)\le \a_n^{-1}h_0
  $$
  on $X\times[0,\a_n\rho^2]$.
\end{enumerate}
\end{lma}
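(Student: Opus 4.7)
The plan is to reduce to a local construction using the Chern--Ricci flow. Since $h_0$ is \K, any smooth solution to the Chern--Ricci flow with initial data $h_0$ is in fact a \KR flow solution, so it suffices to construct a Chern--Ricci flow on $X\times[0,\a_n\rho^2]$ with the stated $C^0$ comparability. The use of Chern--Ricci flow rather than \KR flow directly is crucial because, as in \cite{Gill2011,TosattiWeinkove2015}, the Chern--Ricci flow can be solved on arbitrary Hermitian backgrounds via a parabolic complex Monge--Amp\`ere equation, which is well suited to genuinely local constructions where one cannot insist on globally \K perturbations.

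First I would work at a fixed point $x_0 \in X$. By the hypotheses $|\Rm(h_0)|\le\rho^{-2}$ and $\mathrm{inj}_{h_0}(x_0)\ge\rho$ on $U$, after rescaling to $\rho=1$, there exist biholomorphic coordinates (for instance K\"ahler normal coordinates, or the coordinates from the Cheeger--Gromov--Taylor injectivity radius estimate) on a ball $B_{h_0}(x_0,c_n)\Subset U$ in which the components of $h_0$ are uniformly comparable to the Euclidean metric with uniform $C^k$ bounds. On such a chart, pick a smooth reference Hermitian form $\hat\omega$ with bounded geometry and solve the scalar Chern--Ricci parabolic equation $\partial_t\varphi=\log\bigl((\hat\omega+\chi(t)+\sqrt{-1}\partial\bar\partial\varphi)^n/\Omega\bigr)$ with appropriate choice of $\Omega$ and $\chi$, following the reduction in \cite{TosattiWeinkove2015}. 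Short-time solvability on a uniform interval $[0,\a_n]$ in the rescaled variables is standard parabolic theory, and the $C^0$ bound $\a_n h_0\le h(t)\le\a_n^{-1}h_0$ is obtained by applying the maximum principle on the chart to $\operatorname{tr}_{h_0}h(t)$ and to $\log(\det h(t)/\det h_0)$, exploiting the fact that both satisfy heat-type inequalities whose forcing terms are controlled by the bounded curvature of $h_0$.

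Next I would use uniqueness to patch. For two points $x_0,x_1\in X$ whose charts overlap, the two local Chern--Ricci flows are both smooth, have the same initial data $h_0$, and satisfy uniform $C^0$ and curvature bounds on the overlap (the latter coming from Shi-type interior estimates); standard uniqueness for the Chern--Ricci flow on a precompact open set with prescribed initial metric and uniform bounds forces them to agree on the intersection. Since $X$ is defined as a connected component of $U_\rho$, and the balls $B_{h_0}(x_0,c_n\rho)$ with $x_0\in X$ cover $X$ and all lie inside $U$, these compatible local pieces glue to a single smooth solution $h(t)$ on $X\times[0,\a_n\rho^2]$ satisfying (i) and (ii).

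The main obstacle is step two: obtaining a uniform lifespan $\a_n\rho^2$ and a uniform comparability constant $\a_n$ that depend only on the dimension. This requires a careful choice of reference form on the chart so that the resulting Monge--Amp\`ere problem has a priori bounds independent of $x_0$ and of the ambient manifold $N$. One should essentially reproduce the argument of \cite[Lemma 5.1]{LeeTam2017}, which in turn is the complex-geometric analogue of the Hochard partial Ricci flow construction \cite[Lemma 6.2]{Hochard2016}; the scale invariance of the hypotheses under $h_0\mapsto\rho^{-2}h_0$ reduces everything to the unit-scale case, where a compactness/contradiction argument combined with the parabolic Schauder theory delivers the dimensional constant $\a_n$.
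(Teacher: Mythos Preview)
The paper does not prove this lemma; it is quoted from \cite[Lemma 5.1]{LeeTam2017} as a \K analogue of Hochard's partial Ricci flow \cite[Lemma 6.2]{Hochard2016}. Your proposal, however, does not follow that argument, and contains a genuine gap.

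The failure is in your patching step. You claim that two local Chern--Ricci flows with the same initial data $h_0$ must agree on overlaps by ``standard uniqueness for the Chern--Ricci flow on a precompact open set with prescribed initial metric and uniform bounds''. No such uniqueness theorem exists: for a second-order parabolic equation on an open set, initial data alone do not determine the solution; boundary behaviour must be prescribed as well. Your local solutions on balls $B_{h_0}(x_0,c_n)$ and $B_{h_0}(x_1,c_n)$ are constructed with different reference forms and different implicit boundary conditions, and there is no mechanism forcing them to coincide on the overlap. Consequently the gluing does not go through, and no global solution on $X$ is produced.

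The actual construction in \cite{LeeTam2017}, following Hochard, avoids any patching. One modifies $h_0$ near $\partial U$ (using the uniform curvature and injectivity radius bounds to build a suitable exhaustion/conformal factor) so that the resulting metric $\tilde h_0$ is \emph{complete} on $U$ with bounded geometry; this modification destroys the \K condition, which is precisely why the Chern--Ricci flow is needed. One then runs a \emph{single} Chern--Ricci flow from $\tilde h_0$ on all of $U$ for a time $\a_n\rho^2$. On the inner region $U_\rho$ the modification is absent, so $\tilde h_0=h_0$ there and the Chern--Ricci flow coincides with the \KR flow; the $C^0$ bounds $\a_n h_0\le h(t)\le\a_n^{-1}h_0$ follow from maximum principle estimates applied to this one global flow. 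Your intuition about the role of the Chern--Ricci flow is right, but the reason is not that it is ``well suited to local constructions''; it is that the completion trick forces one to flow a non-\K Hermitian metric.
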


  The following local estimates of \KR flow are by Sherman-Weinkove \cite{ShermanWeinkove2012} and Lott-Zhang \cite{LottZhang2013}. The following is from \cite[Propositon A.1]{LottZhang2013}.
\begin{lma}\label{l-curv1}
For any $K,n>0$, there exists $A(n,K)$ depending only on $n, K$ such that the following holds: For any \K manifold $(N^n,g_0)$ (not necessarily complete), suppose $g(t), t\in [0,S]$ is a solution of \K Ricci flow on $B_{g_0}(x_0,r)$ with $g(0)=g_0$ and $B_{g_0}(x_0,r)\Subset N$ such that on $B_0(x_0,r)$,
$$|\mathrm{Rm}(g_0)|\leq Kr^{-2} \quad\text{and}\quad |\nabla_{g_0} \Rm(g_0)|\leq Kr^{-3}. $$
Assume in addition that on $B_{g_0}(x_0,r)$,
$$K^{-1}g_0\leq g(t)\leq K g_0$$
Then on $B_{g_0}(x_0,\frac r8)$, $t\in [0,S]$,
$$|\Rm|(g(t))\leq Ar^{-2}.$$

\end{lma}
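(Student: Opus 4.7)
The plan is to reduce the K\"ahler--Ricci flow to a local parabolic complex Monge--Amp\`ere equation for a scalar potential and then apply Evans--Krylov-type regularity combined with Schauder bootstrap, in the spirit of Sherman--Weinkove \cite{ShermanWeinkove2012} and Lott--Zhang \cite{LottZhang2013}.

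By parabolic rescaling $\tilde g(t) = r^{-2} g(r^2 t)$ I may assume $r = 1$. Fix $y \in B_{g_0}(x_0, 1/8)$ and choose $r_0 = r_0(n, K) > 0$ small enough that $B_{g_0}(y, r_0) \Subset B_{g_0}(x_0, 1)$ admits a smooth K\"ahler potential $\psi_0$ for $g_0$, i.e.\ $g_0 = \sqrt{-1}\partial\bar\partial \psi_0$. Then the K\"ahler--Ricci flow starting from $g_0$ is equivalent, on this ball, to the parabolic complex Monge--Amp\`ere equation
\begin{equation*}
\frac{\partial \phi}{\partial t} \;=\; \log \frac{\det(g_{0,i\bar j} + \partial_i \partial_{\bar j} \phi)}{\det g_{0,i\bar j}}, \qquad \phi(\cdot, 0) = 0,
\end{equation*}
with $g(t) = g_0 + \sqrt{-1}\partial\bar\partial\phi$. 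The hypothesis $K^{-1} g_0 \le g(t) \le K g_0$ translates into a uniform $C^{1,1}$ bound on $\phi$ in the background holomorphic coordinates underlying $\psi_0$.

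With this $C^{1,1}$ bound in hand, the Monge--Amp\`ere operator on the right is uniformly parabolic and concave in $\partial_i\partial_{\bar j}\phi$. I would apply the parabolic complex Evans--Krylov theorem on $B_{g_0}(y, r_0/2) \times [0, S]$ to obtain an interior $C^{2,\alpha}$ estimate for $\phi$, with constants depending only on $n$, $K$, $r_0$, and the $C^{2,\alpha}$-norm of $g_0$ in the chart (the latter controlled by $|\nabla_{g_0}\Rm(g_0)| \le K$ after passing to $g_0$-normal holomorphic coordinates). Differentiating the flow in space, each $\partial_k \phi$ satisfies a linear uniformly parabolic equation with $C^\alpha$ coefficients, so iterated Schauder estimates yield $C^{4,\alpha}$ control on $\phi$ on a slightly smaller ball. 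Since $\Rm(g(t))$ is a universal polynomial in $g(t)^{-1}$ and in up to two derivatives of $g_{i\bar j}(t) = g_{0,i\bar j} + \partial_i\partial_{\bar j}\phi$, this gives the pointwise bound $|\Rm(g(t))|(y) \le A = A(n,K)$; as $y \in B_{g_0}(x_0, 1/8)$ was arbitrary, the lemma follows.

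The main obstacle is the complex parabolic Evans--Krylov step: only $C^{1,1}$ control on $\phi$ is available, and the estimate must be strictly local, with no boundary data prescribed on $\partial B_{g_0}(y, r_0)$. This is precisely what is established in \cite[Proposition A.1]{LottZhang2013} and in the earlier work of Sherman--Weinkove, and I would quote their argument as a black box. The remaining steps (construction of local K\"ahler potentials via Poincar\'e's lemma, covering of the $1/8$-ball, and Schauder bootstrap from $C^{2,\alpha}$ up to $C^{4,\alpha}$) are routine once this regularity result is in place.
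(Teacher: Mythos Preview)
The paper does not give its own proof of this lemma; it simply quotes it from \cite[Proposition~A.1]{LottZhang2013} (with credit also to Sherman--Weinkove \cite{ShermanWeinkove2012}). Your sketch is exactly the argument those references carry out: reduce locally via a K\"ahler potential to a parabolic complex Monge--Amp\`ere equation, use the metric equivalence to get uniform parabolicity and a two-sided bound on $\partial_i\partial_{\bar j}\phi$, apply the interior parabolic Evans--Krylov estimate for $C^{2,\alpha}$, and then bootstrap by Schauder. So your approach coincides with the paper's (cited) approach.
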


{ In the study of Ricci flow which may not be complete or may not have uniformly  bounded curvature, we need good estimates to compare  distance functions in different time. Here are basic results which are used in this work. The first one is the shrinking balls lemma by Simon-Topping   \cite[Corollary 3.3]{SimonTopping2016}}:

\begin{lma}\label{l-balls}
 There exists a constant $\beta=\beta(m)\geq 1$ depending only on $m$ such that the following is true. Suppose $(N^m,g(t))$ is a Ricci flow for $t\in [0,S]$ and $x_0\in N$   with $B_0(x_0,r)\subset\subset M$ for some $r>0$, and $\Ric(g(t))\leq (n-1)a/t$ on $B_{g_0}(x_0,r)$ for each $t\in (0,S]$. Then
$$B_t\left(x_0,r-\beta\sqrt{a t}\right)\subset B_{g_0}(x_0,r).$$
\end{lma}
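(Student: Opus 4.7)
The plan is a bootstrap continuity argument driven by Perelman's distance distortion estimate. By parabolic rescaling we may assume $a=1$, so the hypothesis becomes $\Ric(g(s))\le (m-1)/s$ on $B_{g_0}(x_0,r)$ for $s\in(0,S]$. Define
$$T^*:=\sup\{\tau\in[0,S]\ :\ B_{g(s)}(x_0,r-\beta\sqrt{s})\subset B_{g_0}(x_0,r)\text{ for all }s\in[0,\tau]\},$$
where $\beta=\beta(m)\ge 1$ is to be chosen. Continuity of $g(t)$ in $t$ makes $T^*>0$ (we may assume $r-\beta\sqrt{S}>0$; otherwise the conclusion is vacuous). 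The goal is to show $T^*=S$.

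Suppose for contradiction that $T^*<S$. Then by continuity there exists $y$ with $d_0(x_0,y)\le r$, $d_{T^*}(x_0,y)=r-\beta\sqrt{T^*}$, and for each $s\in[0,T^*)$ one has $d_s(x_0,y)\le r-\beta\sqrt{s}$, so every minimal $g(s)$-geodesic from $x_0$ to $y$ lies in $B_{g_0}(x_0,r)$ where the Ricci upper bound applies. I would now invoke Perelman's lemma (cf.\ \cite[Lemma~8.3(b)]{Perelman2002}): if $\Ric(g(s))\le (m-1)r_0^{-2}$ on $B_{g(s)}(x_0,r_0)\cup B_{g(s)}(y,r_0)$ and $d_s(x_0,y)\ge 2r_0$, then
$$\frac{d^-}{ds}d_s(x_0,y)\ge -\frac{c(m)}{r_0}$$
in the sense of forward difference quotients. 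Applying this with $r_0=\sqrt{s}$ uses exactly the Ricci upper bound at time $s$, and the auxiliary balls are contained in $B_{g_0}(x_0,r)$ because $d_s(x_0,y)+\sqrt{s}\le r$ whenever $\beta\ge 1$. This yields
$$\frac{d^-}{ds}d_s(x_0,y)\ge -\frac{c(m)}{\sqrt{s}}\qquad\text{whenever }d_s(x_0,y)\ge 2\sqrt{s}.$$

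Integrating this barrier inequality from $0$ to $T^*$ (a standard step, e.g.\ as in the proof of Lemma~\ref{Lemma3.1ST}/\cite{SimonTopping2016}), and handling the complementary regime $d_s(x_0,y)\le 2\sqrt{s}$ trivially since it contributes at most $O(\sqrt{T^*})$ to the distance change, gives
$$d_{T^*}(x_0,y)\ge d_0(x_0,y)-\tilde c(m)\sqrt{T^*}.$$
Choosing $\beta=\beta(m):=\max\{1,\tilde c(m)+1\}$ we obtain $d_{T^*}(x_0,y)>r-\beta\sqrt{T^*}$, contradicting the definition of $T^*$. Hence $T^*=S$, which is the desired inclusion after undoing the rescaling.

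The main technical obstacle is structural rather than computational: one must guarantee that at each intermediate time $s\in(0,T^*]$ the minimal $g(s)$-geodesic realising $d_s(x_0,y)$, together with the small balls of radius $r_0=\sqrt{s}$ around its endpoints, is contained in $B_{g_0}(x_0,r)$, since the Ricci upper bound is only assumed on that set. The bootstrap definition of $T^*$ is precisely designed to feed this information in, and the triangle-inequality check $d_s(x_0,y)+\sqrt{s}\le r$ closes the loop. A secondary (routine) point is the rigorous passage from the forward difference quotient inequality to the integrated distance comparison, which is handled exactly as in \cite[Lemma~3.1]{SimonTopping2016}.
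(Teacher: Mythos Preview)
The paper does not give its own proof; the lemma is quoted from Simon--Topping \cite[Corollary~3.3]{SimonTopping2016}. Your bootstrap-plus-Perelman strategy is exactly the approach used there, so the outline is right.

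There is, however, a genuine gap in the execution. The assertion ``for each $s\in[0,T^*)$ one has $d_s(x_0,y)\le r-\beta\sqrt{s}$'' does not follow from anything available: the definition of $T^*$ gives an inclusion of \emph{balls} at each time $s<T^*$, but says nothing about where the particular point $y$ sits at intermediate times. You then use this unjustified claim to place the auxiliary ball $B_{g(s)}(y,\sqrt s)$ inside the region where the Ricci bound is assumed, and without it Perelman's Lemma~8.3(b) cannot be invoked. (Even granting the claim, $d_s(x_0,y)+\sqrt s\le r$ would only put $B_{g(s)}(y,\sqrt s)$ inside $B_{g(s)}(x_0,r)$, whereas the Ricci bound is assumed on $B_{g_0}(x_0,r)$; you need $d_s(x_0,y)+\sqrt s\le r-\beta\sqrt s$ to feed back through the bootstrap inclusion.) Separately, the inequality $d_0(x_0,y)\le r$ is in the wrong direction: the point $y$ witnessing failure of the inclusion just past $T^*$ satisfies $d_{T^*}(x_0,y)\le r-\beta\sqrt{T^*}$ and $d_0(x_0,y)\ge r$ (it lies \emph{outside} the initial ball), and it is this lower bound that turns $d_{T^*}(x_0,y)\ge d_0(x_0,y)-\tilde c\sqrt{T^*}\ge r-\tilde c\sqrt{T^*}$ into a contradiction once $\beta>\tilde c$. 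The standard repair is to run a second, inner continuity argument tracking $s\mapsto d_s(x_0,y)$ backwards from $T^*$: as long as $d_s(x_0,y)$ stays below $r-\beta\sqrt s$, the minimising $g(s)$-geodesic and the endpoint balls of radius $\sqrt s$ remain inside $B_{g(s)}(x_0,r-\beta\sqrt s)\subset B_{g_0}(x_0,r)$ (after absorbing the extra $\sqrt s$ into $\beta$), so Perelman's estimate applies and forces the trajectory to stay below the barrier all the way to $s=0$; this is how \cite{SimonTopping2016} closes the loop.
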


The following bi-H\"older Distance estimates are from \cite[Lemma 3.1]{SimonTopping2017}.
\begin{lma}\label{Lemma3.1ST}
Let $(N^m,g(t))$ be a Ricci flow for $t\in(0,T]$, not necessarily complete, and let $p\in N$  such that for all $t\in (0,T]$, we have $B_{g(t)}(p,2r)\Subset  N$. Suppose further that for some $c_0,\a>0$ and for each $t\in (0,T]$, we have $$-\a\leq \Ric(g(t))\leq \frac{(m-1)c_0}t $$ on $B_{g(t)}(x_0,2r)$. Define $\Omega_T=\cap_{t\in(0,T]}B_{g(t)}(x_0,r)$. For any $x,y\in \Omega_T$ let $d_{g(t)}(x, y)$ be the distance of $x, y$ with respect to $g(t)$ which  is unambiguous defined for all $t\in(0,T]$ and must be realised by a minimising geodesic lying within $B_{g(t)}(x_0,2r)$. Then we have the following.
\begin{enumerate}
  \item [(1)] For any $0<s\leq t\leq T$, we have
$$d_{g(s)}(x,y)-\b(m) \sqrt{c_0}(\sqrt{t}-\sqrt{s})\leq d_{g(t)}(x,y)\leq e^{\a(t-s)}d_{g(s)}(x,y) $$
for some positive constant $\b(m)$ depending only on $m$.
  \item [(2)] $d_{g(s)}$ converges uniformly to a distance metric $d_0$ on $\Omega_T$ as $t\rightarrow 0^+$ and
$$d_{0}(x,y)-\b(m) \sqrt{c_0}\sqrt{t}\leq d_{g(t)}(x,y)\leq e^{\a t}d_{0}(x,y).$$
  \item [(3)]  There exists $\gamma>0$, depending only on $n$, $c_0$ and upper bounds for $T$ and $r$, such
that $d_{g(t)}(x,y)\geq \gamma (d_0(x,y))^{1+2(m-1)c_0}$ for all $t\in (0,T]$.
\item[(4)] For all $t\in (0,T]$ and $R<R_0 =:re^{-\a T} -\b\sqrt{c_0T}$, we have
$$B_{g(t)}(x_0,R_0)\subset \Omega_T;\ B_{d_0}(x_0,R)\Subset  \mathcal{O}$$  where $\mathcal{O}$ is the component of   $int(\Omega_T)$ containing $x_0$.
\end{enumerate}

\end{lma}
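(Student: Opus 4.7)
My plan is to treat the four conclusions in order, reducing each to an ODE-type distance distortion estimate along minimizing geodesics that remain inside $B_{g(\tau)}(x_0,2r)$; the latter is justified by the observation that for $x,y\in\Omega_T\subset B_{g(t)}(x_0,r)$, any $g(\tau)$-minimizer has length at most $2r$ and so stays in $B_{g(\tau)}(x_0,2r)$, where both curvature bounds hold. For (1), the upper estimate is immediate: from $\partial_\tau g=-2\Ric\leq 2\alpha g$ on $B_{g(\tau)}(x_0,2r)$ one integrates $g(t)\leq e^{2\alpha(t-s)}g(s)$ along any curve connecting $x$ and $y$ to get $d_{g(t)}\leq e^{\alpha(t-s)}d_{g(s)}$. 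For the lower estimate, invoke the Hamilton--Perelman distance distortion inequality (Perelman's Lemma 8.3) with $K=(m-1)c_0/\tau$ and auxiliary scale $r_0=\sqrt{\tau/c_0}$: writing $L(\tau)=d_{g(\tau)}(x,y)$, one obtains the barrier inequality
\begin{equation*}
\frac{d^+}{d\tau} L(\tau) \geq -\beta(m)\sqrt{c_0/\tau},
\end{equation*}
which on integration from $s$ to $t$ yields $d_{g(s)}-\beta(m)\sqrt{c_0}(\sqrt t-\sqrt s)\leq d_{g(t)}$.

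For (2), combining the two inequalities of (1) shows that for any $x,y\in\Omega_T$,
\begin{equation*}
|d_{g(t)}(x,y)-d_{g(s)}(x,y)| \leq \beta(m)\sqrt{c_0}(\sqrt t-\sqrt s) + (e^{\alpha(t-s)}-1)\,d_{g(s)}(x,y),
\end{equation*}
so $\{d_{g(s)}\}_{s>0}$ is uniformly Cauchy on bounded subsets of $\Omega_T$ as $s\downarrow 0$ and converges uniformly to a symmetric, triangle-inequality-satisfying $d_0$; the claimed limit inequality is just (1) with $s\to 0^+$, and positive definiteness of $d_0$ will drop out of (3). For (4), specialize (1) to one endpoint $x_0$: if $x\in B_{g(t)}(x_0,R_0)$ then for $\tau\geq t$, $d_{g(\tau)}(x_0,x)\leq e^{\alpha(\tau-t)}R_0\leq e^{\alpha T}R_0<r$, while for $\tau<t$, $d_{g(\tau)}(x_0,x)\leq d_{g(t)}(x_0,x)+\beta\sqrt{c_0}(\sqrt t-\sqrt\tau)\leq R_0+\beta\sqrt{c_0 T}=re^{-\alpha T}\leq r$; hence $x\in\Omega_T$. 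The inclusion $B_{d_0}(x_0,R)\Subset\mathcal{O}$ then follows from the uniform convergence in (2).

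For (3), which is the principal obstacle, set $L=d_0(x,y)$ and split into two regimes. When $t\leq L^2/(16\beta^2 c_0)$, the additive bound from (2) already gives $d_{g(t)}(x,y)\geq L-\beta\sqrt{c_0 t}\geq L/2$, which is much stronger than needed. When $t> s_0:=L^2/(16\beta^2 c_0)$, the additive bound still yields $d_{g(s_0)}(x,y)\geq L/2$; now use the pointwise Ricci upper bound $\Ric(g(\tau))\leq (m-1)c_0/\tau\cdot g(\tau)$ along a $g(\tau)$-minimizing geodesic $\gamma_\tau$ between $x$ and $y$ to get the multiplicative distortion
\begin{equation*}
\frac{d^+}{d\tau} L(\tau) \geq -\int_{\gamma_\tau}\Ric(\dot\gamma,\dot\gamma)\,ds \geq -\frac{(m-1)c_0}{\tau}L(\tau),
\end{equation*}
hence $\frac{d^+}{d\tau}\log L(\tau)\geq -(m-1)c_0/\tau$. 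Integrating from $s_0$ to $t\leq T$ gives
\begin{equation*}
d_{g(t)}(x,y)\geq \tfrac{L}{2}\lf(\tfrac{s_0}{t}\ri)^{(m-1)c_0} = \tfrac{1}{2}(16\beta^2 c_0 t)^{-(m-1)c_0}L^{1+2(m-1)c_0},
\end{equation*}
which is the asserted bound with $\gamma=\gamma(m,c_0,T,r)>0$.

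The subtle point will be justifying the differential inequalities for $L(\tau)$ in the barrier/upper-Dini sense when the minimizer is not smoothly varying in $\tau$, and ensuring throughout that the minimizer remains inside the region $B_{g(\tau)}(x_0,2r)$ where the Ricci hypotheses hold; both are handled by the standard trick of working with upper Dini derivatives at a time where a given minimizer is realized, and by noting that length at most $2r$ forces the minimizer into $B_{g(\tau)}(x_0,2r)$. The matching of Perelman's auxiliary scale $r_0=\sqrt{\tau/c_0}$ with the regime where the geodesic is either long (so the $r_0^{-1}$ term dominates) or short (so the $Kr_0$ term dominates) requires a slightly careful dichotomy but contributes only to the dimensional constant $\beta(m)$.
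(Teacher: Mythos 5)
This lemma is not proved in the paper at all: it is quoted verbatim from Simon--Topping \cite[Lemma 3.1]{SimonTopping2017}, so there is no in-paper argument to compare against. Your proposal is, in outline, exactly the standard proof from that reference: the upper bound in (1) from $\partial_\tau g=-2\Ric\le 2\a g$, the lower bound from Perelman's Lemma 8.3 with the scale $r_0=\sqrt{\tau/c_0}$ integrated to give the $\sqrt{c_0}(\sqrt t-\sqrt s)$ term, (2) and (4) as formal consequences, and (3) via the dichotomy $t\lessgtr d_0(x,y)^2/(16\b^2c_0)$ combined with the multiplicative distortion $\frac{d^+}{d\tau}\log L\ge -(m-1)c_0/\tau$; the exponent $1+2(m-1)c_0$ comes out correctly. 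Two points deserve more than the passing mention you give them. First, Perelman's lemma needs the Ricci upper bound on $B_{g(\tau)}(x,r_0)\cup B_{g(\tau)}(y,r_0)$, and the containment of these balls in $B_{g(\tau)}(x_0,2r)$ requires $r_0=\sqrt{\tau/c_0}\le r$, which is not automatic for all $\tau\le T$; one must either truncate $r_0$ at $r$ (and check the resulting estimate still integrates to the stated form in the regime where the conclusion is non-vacuous) or restrict attention, as Simon--Topping do, to the time range where the estimate has content. Second, in (4) you apply the inequalities of (1) to a point $x\in B_{g(t)}(x_0,R_0)$ that is not yet known to lie in $\Omega_T$, whereas (1) was established only for points of $\Omega_T$; this needs the standard open--closed continuity argument in $\tau$ starting from $\tau=t$, using that the estimates at time $\tau$ only require $x\in B_{g(\tau)}(x_0,r)$. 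Neither issue invalidates the approach, but both must be written out for the proof to be complete.
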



\section{An example by Ni-Zheng}

In \cite[section 7]{NiZheng2018}, Ni and Zheng has constructed an $U(n)$ invariant \K metric on $\C^n$ with $\OB\ge0,\Ric>0 $ and holomorphic sectional curvature being negative somewhere. In particular, the holomorphic bisectional curvature is not nonnegative.   In this section, we will show that their example also have maximal volume growth. Let $(z_1,...,z_n)$ be the standard coordinate on $\mathbb{C}^n$ and $r=|z|^2$. An $U(n)$ invariant metric on $\mathbb{C}^n$ can be characterized by a smooth function $\xi\in C^\infty([0,+\infty)]$with $\xi(0)=0$ in the following way: for a given smooth function $\xi$, define $h(r)=\exp\left( -\int^r_0s^{-1}\xi(s)ds\right)$ and $f(r)=r^{-1}\int^r_0h(s)ds$ with $h(0)=1$. Then $g_{ij}=f(r)\delta_{ij}+f'(r) \bar z_i z_j$ defines a $U(n)$-invariant \K metric on $\mathbb{C}^n$. In particular, $g$ is a complete metric if $\xi\in (0,1)$ according to \cite{WuZheng}.

First, we recall a necessary and sufficient condition for a complete $U(n)$ invariant \K metric to have asymptotic Euclidean volume growth using the characterization function $\xi$. The following theorem was proved in \cite[Theorem 3]{WuZheng} within the class of $U(n)$ invariant metrics with nonnegative bisectional curvature, but it is clear from their proof that it suffices to assume the completeness, namely $\xi \in (0,1)$.
\begin{thm}\cite[Theorem 3]{WuZheng}
If $\xi \in (0,1)$, then the corresponding complete $U(n)$ invariant \K metric $g$ satisfies $$\lim_{r\rightarrow +\infty}\frac{V_{g}(B_g(p,r))}{r^{2n}}=c_n(1-\xi(\infty))^{2n}$$
for some dimensional constant $c_n>0$.
\end{thm}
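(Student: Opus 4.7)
My plan is to use the $U(n)$-symmetry to reduce both the volume of $B_g(p,R)$ and the geodesic radius $R$ to one-variable integrals in $r = |z|^2$, then extract the limiting ratio by an elementary L'Hospital computation whose main input is the relation $h'(r) = -h(r)\xi(r)/r$ coming directly from the definition of $h$. By $U(n)$-invariance the geodesic balls around the origin are Euclidean balls $\{|z|^2 \le r_R\}$ with $r_R$ determined by the radial arclength, so the problem decouples into a volume integral and an arclength integral, each of which can be handled by the substitution $u = |z|^2$.

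\textbf{Step 1 (volume formula).} Since $(g_{i\bar j}) = f(r)\,I + f'(r)\,\bar z z^{T}$ is a rank-one update of $f(r)I$, the matrix-determinant lemma yields $\det(g_{i\bar j}) = f^{n-1}(r)\bigl(f(r)+rf'(r)\bigr) = f^{n-1}(r)h(r)$, using the defining identity $(rf(r))'=h(r)$. Hence $dV_g = C_n f^{n-1}(r)h(r)\,dV_{\mathrm{Eucl}}$, and passing to Euclidean polar coordinates,
\[
 V_g(B_g(p,R)) = C'_n\int_0^{r_R} r^{n-1} f^{n-1}(r)h(r)\,dr.
\]
The telescoping identity $\tfrac{d}{dr}\!\bigl[(rf(r))^n\bigr] = n\,r^{n-1}f^{n-1}(r)h(r)$ collapses the integral to
\[
 V_g(B_g(p,R)) = C''_n\,A(r_R)^n,\qquad A(r) := rf(r) = \int_0^r h(u)\,du.
\]

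\textbf{Step 2 (distance and asymptotics).} Along a real ray $z = s\mathbf{e}$ one has $\|\partial_s\|_g^2 \propto f(s^2)+s^2 f'(s^2)=h(s^2)$, so after substituting $u=s^2$,
\[
 R = \alpha_n\,B(r_R),\qquad B(r) := \int_0^r \sqrt{h(u)/u}\,du.
\]
Completeness ($\xi \in (0,1)$) guarantees $B(r)\to\infty$, and $\xi(\infty)<1$ also gives $A(r)\to\infty$. Two applications of L'Hospital combined with the identity
\[
 \frac{d}{dr}\sqrt{rh(r)} \;=\; \frac{1-\xi(r)}{2}\sqrt{h(r)/r}
\]
(from $h'=-h\xi/r$) give
\[
 \lim_{r\to\infty}\frac{A(r)}{B(r)^2} = \lim_{r\to\infty}\frac{\sqrt{rh(r)}}{2B(r)} = \frac{1-\xi(\infty)}{4}.
\]
Combining with $V_g(B_g(p,R))/R^{2n} = C''_n\bigl(A(r_R)/R^2\bigr)^n$ and tracking the dimensional constants into a single $c_n > 0$ yields the claimed asymptotic formula.

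\textbf{Main obstacle.} The critical step is the second L'Hospital application: one must check that $\sqrt{rh(r)}\to\infty$ (which uses $\xi(\infty)<1$) and rule out obstructing oscillation of $\xi$ near infinity; if $\xi$ merely tends to $\xi(\infty)$ without a rate, the derivative quotient still forces the limit because the ratio $\tfrac{d}{dr}\sqrt{rh(r)}/B'(r) = (1-\xi(r))/2$ converges pointwise. The algebraic miracle of Step~1 --- that the telescoping identity $(rf)'=h$ eliminates the potential $f$ from the volume asymptotics --- is what makes the whole calculation tractable; without it, the dependence on $f$ would remain delicate.
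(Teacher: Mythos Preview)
The paper does not give its own proof of this statement: it is quoted from \cite[Theorem 3]{WuZheng}, with only the side remark that Wu--Zheng's hypothesis of nonnegative bisectional curvature can be relaxed to completeness ($\xi\in(0,1)$). There is therefore no in-paper argument to compare your approach against.

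Your proof is correct and is essentially the natural one. The determinant identity $\det(g_{i\bar j})=f^{n-1}h$ combined with the telescoping $\tfrac{d}{dr}(rf)^n = n\,r^{n-1}f^{n-1}h$ is precisely what collapses the volume to $A(r)^n$, and the two L'H\^opital steps are valid once $\xi(\infty)$ exists. Two small remarks. First, $A(r)\to\infty$ does not require $\xi(\infty)<1$: since $\xi<1$ pointwise one has $h(r)>C/r$ for large $r$, so $A(r)\gtrsim\log r$. Second, your chain genuinely produces $\lim A/B^2=(1-\xi(\infty))/4$ and hence a volume ratio $c_n(1-\xi(\infty))^{n}$, not the printed $(1-\xi(\infty))^{2n}$. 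A sanity check in complex dimension $n=1$ --- where with $\xi\equiv\xi_0$ the metric is the flat cone of angle $2\pi(1-\xi_0)$ with area ratio $\pi(1-\xi_0)$ --- confirms the exponent should be $n$. This looks like a transcription slip in the paper's statement; it is immaterial for the application, which only uses the equivalence $\xi(\infty)<1\Longleftrightarrow$ maximal volume growth.

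Your ``main obstacle'' is well identified but harmless: when $\xi(\infty)=1$ it can happen that $\sqrt{rh(r)}$ stays bounded (exactly when $\int^\infty(1-\xi(s))s^{-1}\,ds<\infty$), so the second L'H\^opital is unavailable; but then $\sqrt{rh}/(2B)\to 0$ directly since $B\to\infty$, and this matches $(1-\xi(\infty))/4=0$.
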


It suffices to check that the example given in \cite[Section 7]{NiZheng2018} satisfies $\xi(\infty)<1$. Following the notations in \cite{NiZheng2018}, we have
$$\frac{1}{(1-\xi(r))^2}=1+(F'(x))^2=(k(x^2))^2$$
where $F$ is a function of $x=\sqrt{rh}$. Hence, the condition $\xi(\infty)<1$ is equivalent to the finiteness of $F'(x_0)$ where $x_0=\lim_{r\rightarrow +\infty}\sqrt{rh}\leq +\infty$. From their first example, $k(t)$ is chosen to be $1+\a+t\a'$ where
$$\a(t)=\lambda \left(1-\frac{1}{(1+t^2)^a}\right)$$
for some $a\in (\frac{1}{2},1)$ and $\lambda=\lambda(a)>1$. In particular, for $\lambda$ sufficiently large, the corresponding $U(n)$-invariant \K metric will have $\OB,\Ric> 0$ but negative holomorphic sectional curvature somewhere. To check the finiteness of $F'(x_0)$,
\begin{align*}
\a+t\a' &= \lambda \left(1-\frac{1}{(1+t^2)^a}\right)+\frac{2a\lambda t^2}{(1+t^2)^{a+1}}\rightarrow \lambda <\infty.
\end{align*}
Hence $x_0=\infty$ and $\xi(\infty)<1$. As $g$ has $\Ric\geq 0$, $g$ has maximal volume growth.

\end{document}